\theoremstyle{plain}
\newtheorem{prop}{Proposition}
\newtheorem{thm}{Theorem}
\newtheorem{lem}{Lemma}
\newtheorem{cor}{Corollary}
\theoremstyle{definition}
\newtheorem{example}{Example}
\newtheorem{defn}{Definition}
\newtheorem*{theom}{Theorem}
\theoremstyle{remark}
\newtheorem{rem}{Remark}
\newcommand{\lie}[1]{\mathfrak{#1}}
\newcommand\bc{\mathbb C}
\newcommand\bn{\mathbb N}
\newcommand\bz{\mathbb Z}
   \newcommand\bod{\bold d}
\newcommand{\st}{\operatorname{st}}
\def\a{\alpha}
\DeclareMathOperator{\htt}{ht}
\DeclareMathOperator{\mult}{mult}
\DeclareMathOperator{\supp}{supp}
\DeclareMathOperator{\wt}{wt}
\newenvironment{pf}{\proof}{\endproof}
\newcounter{cnt}
\def\mydggeometry{\makeatletter\dg@YGRID=1\dg@XGRID=20\unitlength=0.003pt\makeatother}
\makeatother \theoremstyle{remark}
\numberwithin{equation}{section}
\def\section{\def\@secnumfont{\mdseries}\@startsection{section}{1}%
  \z@{.7\linespacing\@plus\linespacing}{.5\linespacing}%
  {\normalfont\scshape\centering}}
\def\subsection{\def\@secnumfont{\bfseries}\@startsection{subsection}{2}%
  {\parindent}{.5\linespacing\@plus.7\linespacing}{-.5em}%
  {\normalfont\bfseries}}
\begin{document}


\title{A study on free roots of Borcherds-Kac-Moody Lie superalgebras}


\author{Shushma Rani}
\address{Indian Institute of Science Education and Research, Mohali, India}
\email{shushmarani95@gmail.com, ph16067@iisermohali.ac.in.}

\author{G. Arunkumar}
\address{Indian Institute of Science, Bangalore, India}
\email{arun.maths123@gmail.com, garunkumar@iisc.ac.in.}

\thanks{$^{1}$-These roots spaces are free from Serre relations. Also, these roots spaces can be identified with certain grade spaces of free partially commutative Lie superalgebras [c.f. Lemma \ref{identification lem}]. So we call them free roots and the associated root spaces free root spaces of $\lie g$.}

\subjclass [2010]{05E15, 17B01, 17B05, 17B22, 17B65, 17B67, 05C15, 05C31}
\keywords{Borcherds-Kac-Moody Lie superlagebras, free roots, Lyndon basis, Heaps of pieces, Chromatic polynomials.}

\maketitle

\begin{abstract}
Let $\lie g$ be a Borcherds-Kac-Moody Lie superalgebra (BKM superalgebra in short) with the associated graph $G$. 
Any such $\lie g$ is constructed from a free Lie superalgebra by introducing three different sets of relations on the generators:
	(1) Chevalley relations,
	(2) Serre relations, and
	(3) Commutation relations coming from the graph $G$.
By Chevalley relations we get a triangular decomposition $\lie g = \lie n_+ \oplus \lie h \oplus \lie n_{-}$ and each roots space $\lie g_{\alpha}$ is either contained in $\lie n_+$ or $\lie n_{-}$. In particular, each $\lie g_{\alpha}$ involves only the relations (2) and (3). In this paper, we are interested in the root spaces of $\lie g$ which are independent of the Serre relations. We call these roots free roots$^{1}$ of $\lie g$. Since these root spaces involve only commutation relations coming from the graph $G$ we can study them combinatorially. We use heaps of pieces to study these roots and prove many combinatorial properties. We construct two different bases for these root spaces of $\lie g$: One by extending the Lalonde's Lyndon heap basis of free partially commutative Lie algebras to the case of free partially commutative Lie superalgebras and the other by extending the basis given in \cite{akv17} for the free root spaces of Borcherds algebras to the case of BKM superalgebras. This is done by studying the combinatorial properties of super Lyndon heaps. We also discuss a few other combinatorial properties of free roots.
\end{abstract}
\tableofcontents

\section{Introduction}
In this paper, we are interested in the combinatorial properties of roots of a Borcherds-Kac-Moody Lie superalgebra (BKM superalgebra in short) $\lie g$. In particular, we are interested in the roots of $\lie g$ whose associated root spaces are independent of the Serre relations. We call these roots free roots of $\lie g$. BKM superalgebras  \cite{UR95,UR06,jp12,nk20} are a natural generalization of two important classes of Lie algebras namely Borcherds algebras (Generalized Kac-Moody algebras) \cite{Bor88,ej95,ej98, akv17} and the Kac-Moody Lie superalgebras \cite{kac77,cjw15,JD00,mr20,afm21}. BKM superalgebras have a wide range of applications in mathematical physics \cite{p12, hbl12, gh11, ghp11,cp19}. For example, physicists applied these algebras to describe supersymmetry, chiral supergravity, and  Gauge theory \cite{GS19,HPV19,k15,glry13,ca09,hl15}. 

We explain our results in detail: In \cite[Theorem 1]{akv17} the following connection between the root multiplicities of a Borcherds algebra $\lie g$ and the $\bold k$-chromatic polynomial $\pi_{\bold k}^G(q)$ [c.f. Definition \ref{chmpoly}] of the associated quasi Dynkin diagram $G$ [c.f. Definition \ref{qdd}] is proved. 

\begin{thm}\label{mainthmchb}
	Let $\lie g$ be a Borcherds algebra with associated Borcherds-Cartan matrix $A$ and the quasi Dynkin diagram $G$. Assume that the matrix $A$ is indexed by the countable (finite/countably infinite) set $I$. Let $\bold k=(k_i: i \in I) \in \mathbb Z_+[I]:= \oplus_{i \in I}\mathbb Z \alpha_i$ be such that $k_i \le 1$ for $i \in I^{re}$. Then
	$$
	\pi^{G}_{\mathbf{k}}(q)=(-1)^{\mathrm{ht}(\eta(\bold k))} \sum_{\mathbf{J}\in L_{G}(\bold k)} (-1)^{|\mathbf{J}|}\prod_{J\in\bold J}\binom{q\text{ mult}(\beta(J))}{D(J,\mathbf{J})}.
	$$
	where $\eta(\bold k) := \sum_{i \in I}k_i \alpha_i$, $\htt (\eta(\bold k)) := \sum_{i \in I}k_i$, and $L_G(\bold k)$ is the bond lattice of weight $\bold k$ of the graph $G$ [c.f. Definition \ref{bond}].
\end{thm}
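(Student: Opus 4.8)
The plan is to match two expressions for the same object: on one side the purely combinatorial expansion of the $\mathbf k$-chromatic polynomial over the bond lattice $L_G(\mathbf k)$, and on the other the root-multiplicity data of $\lie g$ extracted from the Weyl--Kac--Borcherds denominator identity. First I would expand $\pi^G_{\mathbf k}(q)$ from its defining count of proper colorings. By the classical Whitney/Rota inclusion--exclusion over the partition lattice induced by $G$, a proper $q$-coloring of the weighted vertex set is counted with sign according to the connected blocks it forces to agree; this rewrites $\pi^G_{\mathbf k}(q)$ as a signed sum indexed by elements $\mathbf J\in L_G(\mathbf k)$, each $\mathbf J$ being a decomposition of the weight $\mathbf k$ into connected sub-multisets $J$. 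The M\"obius contributions of this lattice assemble into the global sign $(-1)^{\htt(\eta(\mathbf k))}$ together with the local signs $(-1)^{|\mathbf J|}$ recorded in the statement.

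The second step is to interpret each block algebraically. To every connected block $J$ I associate the positive root $\beta(J)=\sum_{i\in J}\alpha_i$, whose root space has dimension $\mult(\beta(J))$. The crucial input is the denominator identity for $\lie g$: restricted to the free roots, the imaginary-correction and Weyl-group terms do not disturb the graded pieces under consideration, and via the identification of free root spaces with graded spaces of a free partially commutative Lie superalgebra, the relevant generating function factors as a product over positive roots with exponents given by the multiplicities. Introducing $q$ colors amounts to $q\,\mult(\beta(J))$ distinguishable colored basis elements attached to $\beta(J)$; distributing these among the $D(J,\mathbf J)$ mutually indistinguishable blocks of type $J$ occurring in $\mathbf J$, subject to the square-free behavior dictated by the Lyndon/free structure of these root spaces, yields exactly the factor $\binom{q\,\mult(\beta(J))}{D(J,\mathbf J)}$. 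The passage from an ordered product to an unordered, square-free choice is precisely the manifestation of the commutativity of identical pieces.

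Matching coefficients of the monomial $e^{\eta(\mathbf k)}$ then identifies the two sides term by term: the connectedness constraint defining $L_G(\mathbf k)$ corresponds exactly to the support condition on the roots $\beta(J)$, and the hypothesis $k_i\le 1$ for $i\in I^{re}$ guarantees that each real simple root enters at most linearly, so that no Serre-type cancellation occurs and the free (Serre-independent) description is exact. The step I expect to be the main obstacle is this final reconciliation: verifying block by block that the M\"obius weights on $L_G(\mathbf k)$ coincide with the binomial products from the denominator expansion, and that the signs assemble correctly into $(-1)^{\htt(\eta(\mathbf k))+|\mathbf J|}$. This hinges on showing that the $q$-dependence enters only through the linear factors $q\,\mult(\beta(J))$, which in turn rests on the freeness of the roots in question and on a careful Poincar\'e--Birkhoff--Witt analysis of the enveloping algebra of the corresponding free partially commutative Lie superalgebra.
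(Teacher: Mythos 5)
Your proposal has the right raw ingredients but assembles them incorrectly, and its first step contains a genuine error. The bond-lattice expansion of $\pi^{G}_{\bold k}(q)$ appearing in the statement is \emph{not} obtainable by Whitney/Rota inclusion--exclusion: M\"obius inversion over $L_G(\bold k)$ produces M\"obius-function weights and powers of $q$, whereas the weights $\binom{q\,\mult(\beta(J))}{D(J,\bold J)}$ involve the root multiplicities of $\lie g$, which are not graph-combinatorial data --- indeed the theorem is used precisely to \emph{compute} those multiplicities (Corollary \ref{recursionmult}), so no purely combinatorial derivation of the right-hand side can exist. The identity is the denominator identity \eqref{denominator} in disguise, and both sides must be extracted from a single object: the coefficient of $e^{-\eta(\bold k)}$ in $U^q$, where $U$ is the Weyl numerator. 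The step your argument is missing entirely is the sum-side computation (Proposition \ref{helprop}): writing $U^q=\sum_{k\ge 0}\binom{q}{k}(U-1)^k$, one shows that the terms surviving in $(U-1)^k[e^{-\eta(\bold k)}]$ are indexed by $k$-tuples $(w_i,\gamma_i)$ with $w_i\in\mathcal{J}(\gamma_i)\cup\{e\}$, and that the sets $I(w_i)\cup I(\gamma_i)$ run exactly over the tuples in $P_k(\bold k,G)$ of independent sets defining $\pi^{G}_{\bold k}(q)$ in \eqref{defgenchr}. This is where the hypothesis $k_i\le 1$ for $i\in I^{re}$ actually enters, via Lemma \ref{helplem}(v): if $w\notin\mathcal{J}(\gamma)\cup\{e\}$, then some real simple root occurs with coefficient $>1$ in $\rho-w(\rho)+w(\gamma)$, so such terms cannot contribute to $e^{-\eta(\bold k)}$. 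Your remark that the Weyl-group terms ``do not disturb the graded pieces'' misstates their role: the elements $w\in\mathcal{J}(\gamma)$ are indispensable, since the sets $I(w)$ supply precisely the real-simple-root portions of the independent sets in the coloring count.

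On the product side your mechanism is also off target. The factor $\binom{q\,\mult(\beta(J))}{D(J,\bold J)}$ does not arise from distributing colored basis vectors among indistinguishable blocks, nor from square-freeness, Lyndon structure, or any Poincar\'e--Birkhoff--Witt analysis; it is simply the coefficient of $e^{-D\beta}$ in the binomial expansion of $(1-e^{-\beta})^{q\,\mult(\beta)}$, and the passage from multisets of positive roots to bond-lattice elements is Lemma \ref{bij}, the bijection between $L_G(\bold k)$ and multisets $\{\beta_1,\dots,\beta_r\}$ of positive roots with $\beta_1+\cdots+\beta_r=\eta(\bold k)$ (connectedness of root supports matching connectedness of blocks). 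The heaps/Lyndon/free partially commutative machinery plays no role in this theorem; the paper in fact emphasizes that its basis constructions are logically independent of it. To repair your proof: discard the purported combinatorial expansion in your first step, establish instead the Lie-theoretic interpretation $U^q[e^{-\eta(\bold k)}]=(-1)^{\htt(\eta(\bold k))}\,\pi^{G}_{\bold k}(q)$ by the matching with $P_k(\bold k,G)$ described above, and then equate this with the elementary binomial expansion of the product side.
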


Further in \cite{akv17}, \textbf{using Theorem \ref{mainthmchb}}, the following theorem is proved in which a set of the basis for the root space $\lie g_{\eta(\bold k)}$   is constructed using the combinatorial model $C^{i}(\bold k,G)$ [c.f. Equation \eqref{cikg}]. 

\begin{thm}\label{mainthmbb}
	Let $\bold k \in \mathbb Z_+[I]$ be as in Theorem \ref{mainthmchb}. Then the set $\left\{\iota(\bold w): \bold w\in C^{i}(\bold k, G)\right\}$ is a basis for the root space $\lie g_{\eta(\bold k)}$. Moreover, if $k_i=1$, the set
	$\left\{e(\bold w): \bold w\in \mathcal{X}_i,\ \rm{wt}(\bold w)=\eta(\bold k)\right\}$
	forms a left-normed basis of $\lie g_{\eta(\bold k)}$ [c.f. Section \ref{section 5}]. 
	We call these bases (one basis for each $i \in I$) as LLN bases (Lyndon - Left normed bases.) [c.f. Example \ref{LLLN}]. 
\end{thm}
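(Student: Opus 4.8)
The plan is to prove that $\{\iota(\bold w) : \bold w \in C^i(\bold k, G)\}$ is a basis by combining two facts: that the family is linearly independent, and that its cardinality equals $\mult(\eta(\bold k)) = \dim \lie g_{\eta(\bold k)}$; an independent family of the right size in a finite-dimensional space is automatically a basis. The hypothesis $k_i \le 1$ for $i \in I^{re}$ is exactly what makes $\eta(\bold k)$ a free root: no Serre relation acts in this weight, so $\lie g_{\eta(\bold k)}$ depends only on the commutation relations from $G$ and can be identified with a graded component of a free partially commutative Lie algebra [c.f. Lemma \ref{identification lem}]. In particular it is finite dimensional, and Lyndon theory applies.

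For linear independence I would exploit the triangularity built into standard (Lyndon) bracketings. With respect to the order determined by $i$, the bracket $\iota(\bold w)$ has leading term $\bold w$ and all remaining terms strictly smaller, so the transition matrix from $\{\iota(\bold w) : \bold w \in C^i(\bold k, G)\}$ to the underlying monomial (heap) spanning set is unitriangular. Hence the $\iota(\bold w)$ are linearly independent in $\lie g_{\eta(\bold k)}$. This is the free partially commutative analogue of the classical statement that distinct Lyndon words give a triangular, hence independent, system, and it rests on the heaps-of-pieces formalism.

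The cardinality count is where Theorem \ref{mainthmchb} is used, and I expect it to be the crux of the argument. There $\pi^G_{\bold k}(q)$ is written as a signed sum over the bond lattice $L_G(\bold k)$ of products of binomials $\binom{q\,\mult(\beta(J))}{D(J,\bold J)}$; one must extract $\mult(\eta(\bold k))$ from this expression and match it to $|C^i(\bold k, G)|$. I would do this by induction on $\htt(\eta(\bold k))$, comparing the chromatic-polynomial formula with an inclusion--exclusion (M\"obius) count of admissible standard words over $L_G(\bold k)$, so that the lattice sums and the binomial factors cancel appropriately. Once $|C^i(\bold k, G)| = \mult(\eta(\bold k))$ is established, the independent family above has the right cardinality and the first assertion follows.

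For the moreover part, the hypothesis $k_i = 1$ forces the $i$-th letter to occur exactly once, so every bracket monomial of weight $\eta(\bold k)$ is multilinear in that letter. Repeatedly applying the Jacobi identity to pull the unique $i$-letter to the outermost position rewrites each Lyndon bracket $\iota(\bold w)$ as a left-normed bracket $e(\bold w)$ with $\bold w \in \mathcal{X}_i$ and $\wt(\bold w) = \eta(\bold k)$ [c.f. Section \ref{section 5}]; since this rewriting is again unitriangular for the word order, it carries the basis $\{\iota(\bold w)\}$ to $\{e(\bold w)\}$. Thus the left-normed elements form a basis as well, which establishes the LLN construction.
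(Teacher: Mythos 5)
Your skeleton (a linearly independent family whose cardinality equals $\dim \lie g_{\eta(\bold k)}$) is logically sound, but the route you take for the cardinality count is precisely the one this paper is written to avoid: extracting $\mult \eta(\bold k)$ from the bond-lattice formula of Theorem \ref{mainthmchb} is the strategy of \cite{akv17}, and the paper states explicitly that its proof of Theorem \ref{mainthmbb} is independent of Theorem \ref{mainthmchb} and of the denominator identity. The paper instead proves the complementary pair, spanning plus cardinality: it shows $\lie g_{\eta(\bold k)}=\lie g^i_{\eta(\bold k)}$ by explicit Jacobi-identity manipulations (Lemmas \ref{s3}, \ref{s4} and \ref{spanlem}), so the $\iota(\bold w)$ automatically span; it then identifies $\mathcal X_i^*$ with the free monoid $\mathcal A_i^*(I,\zeta)$ on super-letters, invokes Lalonde's Propositions \ref{order} and \ref{liffl} (extended to the super case in Proposition \ref{sliffsl}) to conclude $|C^i(\bold k,G)|$ equals the number of super Lyndon heaps of weight $\bold k$, and this equals $\dim \mathcal{LS}_{\bold k}(G,\Psi)=\dim\lie g_{\eta(\bold k)}$ by Theorem \ref{lafps} and Lemma \ref{identification lem} — no chromatic polynomial anywhere (Lemmas \ref{helplem1} and \ref{helplem2}).

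Beyond the difference in route, your sketch has genuine gaps at both pillars. For the count, the sentence ``induction on $\htt(\eta(\bold k))$, comparing the chromatic formula with an inclusion--exclusion count so that the lattice sums and binomial factors cancel'' is the entire crux and is not an argument: the only known implementation of this route (in \cite{akv17}) requires the M\"obius recursion relating words to aperiodic cyclic classes, reduction to the $\bold k=\bold 1$ case via the join graph $G(\bold k)$ with a free symmetric-group action, and the Greene--Zaslavsky theorem \cite{GZ83} on acyclic orientations with unique sink; none of these appear in your plan, and it is not clear a naive bond-lattice induction closes. For independence, your unitriangularity claim for $\iota(\bold w)$ is not immediate either: $L(\bold w)$ is bracketed according to the standard factorization of Lyndon words over the alphabet $\mathcal X_i$, which in general differs from the standard factorization of the corresponding Lyndon heap (cf. Section \ref{same}), so the paper's triangularity result (Proposition \ref{shush5}) does not apply verbatim. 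One needs a two-layer leading-term analysis — triangularity over the free monoid $\mathcal A_i^*(I,\zeta)$, compatibility of its lexicographic order with the heap order (Proposition \ref{order}), and the fact that the left-normed bracket $e(\bold u)$ of each super-letter has heap-leading term $\bold u$ with coefficient $1$ — and in the super case the diagonal coefficient for $\bold w=\bold u\bold u$ is $2$, so ``unitriangular'' must be weakened to ``triangular with nonzero diagonal.'' These points are repairable, but as written the proposal leaves the main equality $|C^i(\bold k,G)|=\mult\eta(\bold k)$ unproved and the independence step unjustified, whereas the paper's heap-theoretic identification settles both at once.
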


We observe that, by Lemma \ref{identification lem}, the root spaces considered in Theorems \ref{mainthmchb} and \ref{mainthmbb} are precisely the set of free roots of the Borcherds algebra $\lie g$. In particular, the proof shows that the associated root spaces are independent of the Serre relations. In this paper, we construct two different bases for the free root spaces of BKM superalgebras. First, we prove Theorem  \ref{mainthmbb} for the case BKM superalgebras. In particular, This will give us the LLN basis for the free root spaces of BKM superalgebras. Our proof is completely different from \cite{akv17} even for Borcherds algebras. For example, our proof of Theorem \ref{mainthmbb} is independent of Theorem \ref{mainthmchb} (for Borcherds algebras) and its super analog Theorem \ref{mainthmch} (for BKM superalgebras). We give direct simple proof. Second, we construct a Lyndon heaps basis for the free root spaces of BKM superalgebras. We remark that this basis is not discussed in \cite{akv17}.

This is done in the following steps.
  Let $\lie g(A,\Psi)$ be a BKM superalgebra with the associated quasi Dynkin diagram $(G,\Psi)$. Assume that $\bold k \in \mathbb Z_+[I]$ satisfies $k_i \le 1$ for $i \in I^{re}\sqcup \Psi_0$.

	(1) First, we introduce the notion of a supergraph $(G,\Psi)$ [c.f. Definition \ref{sg}]. Using this definition, we give the definition of free partially commutative Lie superalgebra $\mathcal{LS}(G,\Psi)$ associated with a supergraph $(G,\Psi)$ (when $\Psi$ is the empty set $\mathcal{LS}(G)$ is the free partially commutative Lie algebra $\mathcal L(G)$ associated with the graph $G$). Then we explain the Lyndon heaps basis of $\mathcal L(G)$ due to Lalonde \cite{la93}: The set $\{\Lambda(\bold E): \bold E \in \mathcal H(I,\zeta) \text{ is a Lyndon heap}\}$ forms a basis of $\mathcal{L}(G)$ [c.f. Theorem \ref{lafp}] (Basics definitions and results in the theory of heaps of pieces are given in Section \ref{pyramidsec}).

 Next, we identify a free root space $\lie g_{\eta(\bold k)}$ of the BKM superalgebra $\lie g(A,\Psi)$ with the $\bold k$-grade space $\mathcal{LS}_{\bold k}(G,\Psi)$ of the free partially commutative Lie superalgebra $\mathcal{LS}(G,\Psi)$ associated with the supergraph $(G,\Psi)$.  The exact statement is as follows:	
  The root space $\lie g_{\eta(\bold k)}$ can be identified with the grade space $\mathcal {LS}_{\bold k}(G)$ of the free partially commutative Lie superalgebra $\mathcal{LS}(G,\Psi)$. In particular, $\mult \eta(\bold k) = \dim \mathcal {LS}_{\bold k}(G)$.   \textbf{This is our first main result.}
  This observation plays a crucial role in giving an alternate proof of Theorem \ref{mainthmbb} when $\lie g$ is a Borcherds algebra [c.f. Section \ref{section 5}].


		(2) Step (1) shows that to construct a basis for a free root space $\lie g_{\eta(\bold k)}$ of the BKM superalgebra $\lie g(A,\Psi)$ it is enough to extend the Lyndon heaps basis of the free partially commutative Lie algebra $\mathcal L(G)$ to the case of free partially commutative Lie superalgebra $\mathcal{LS}(G,\Psi)$. We introduce the notion of super Lyndon heaps and construct a (super) Lyndon heaps basis for $\mathcal{LS}(G,\Psi)$ following the proof idea of Lalonde. The precise statement is as follows [c.f. Theorem \ref{lafps}]:
			The set $\{\Lambda(\bold E): \bold E \in \mathcal H(I,\zeta) \text{ is super Lyndon heap}\}$ forms a basis of $\mathcal{LS}(G,\Psi)$.
	\textbf{This is our second main result}. 
	This gives us Lyndon heaps basis for the free root spaces of BKM superalgebras. 
	
	(3) Next, we extend Theorem \ref{mainthmbb} to the case of BKM superalgebras, namely we construct LLN basis for the free root spaces of BKM superalgebras. The main step is the identification of the combinatorial model $C^{i}(\bold k,G)$ given in Theorem \ref{mainthmbb} with the (super) Lyndon heaps of weight $\bold k$ over the graph $G$. This gives a different proof (heap theoretic proof) to Theorem \ref{mainthmbb} when $\lie g$ is a Borcherds algebra.  We are using neither the denominator identity nor Theorem \ref{mainthmchb} in our proof. We use only the identification of the spaces $\lie g_{\eta(\bold k)}$ and $\mathcal{LS}_{\bold k}(G)$, and the super Lyndon heaps basis for free partially commutative Lie superalgebra $\mathcal{LS}(G,\Psi)$ [c.f. Theorem \ref{lafps}]. In this sense, our proof is simpler and transparent.  \textbf{This is our third main result}. We remark that the LLN basis and the Lyndon heaps basis of a free roots space $\lie g_{\eta(\bold k)}$ are different in general. The cases when the elements of these two bases are the same are discussed in Section \ref{same}.
	
		(4) Next, along with various other combinatorial results on the free roots, we prove the following super analogue of Theorem \ref{mainthmchb} and its corollary. \textbf{This is our final main result}. 
		\begin{thm}\label{mainthmch}
			Let $G$ be the quasi Dynkin diagram of a BKM superalgebra $\lie g$. Assume that $\bold k=(k_i:i\in I) \in \mathbb Z_+[I]$ satisfies the assumptions of Theorem \ref{mainthmchb} and in addition $k_i \le 1$ for $i \in \Psi_0$. Then
			$$
			\pi^{G}_{\mathbf{k}}(q)=(-1)^{\mathrm{ht}(\eta(\bold k))} \sum_{\mathbf{J}\in L_{G}(\bold k)} (-1)^{|\mathbf{J}| + |\bold J_1|}\prod_{J\in\bold J_0}\binom{q\text{ mult}(\beta(J))}{D(J,\mathbf{J})}\prod_{J\in\bold J_1}\binom{-q\text{ mult}(\beta(J))}{D(J,\mathbf{J})}.
			$$
			where $L_G(\bold k)$ is the bond lattice of weight $\bold k$ of the graph $G$.
		\end{thm}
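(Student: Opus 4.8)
The plan is to reduce Theorem \ref{mainthmch} to a single multivariate generating-function identity and then read off the coefficient of the monomial $x^{\bold k}=\prod_{i\in I}x_i^{k_i}$. The starting point is the first main result, the identification $\lie g_{\eta(\bold m)}\cong\mathcal{LS}_{\bold m}(G,\Psi)$ of Lemma \ref{identification lem}, which gives $\mult(\eta(\bold m))=\dim\mathcal{LS}_{\bold m}(G,\Psi)$ for every weight $\bold m$. The crucial structural observation is that each graded piece $\mathcal{LS}_{\bold m}(G,\Psi)$ is homogeneous for the $\bz/2$-grading, its parity being $\sum_{i\in\Psi}m_i\bmod 2$; I will call $\bold m$ even or odd accordingly. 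Applying the Poincar\'e--Birkhoff--Witt theorem for the free partially commutative Lie superalgebra then yields the factorisation of the Hilbert series of its universal enveloping algebra,
\begin{equation*}
\mathcal H(x)=\prod_{\bold m\ \mathrm{even}}(1-x^{\bold m})^{-\mult(\eta(\bold m))}\prod_{\bold m\ \mathrm{odd}}(1+x^{\bold m})^{\mult(\eta(\bold m))},
\end{equation*}
where even root spaces contribute bosonic (symmetric) factors and odd root spaces contribute fermionic (exterior) factors; the hypothesis $k_i\le 1$ for $i\in I^{re}\sqcup\Psi_0$ guarantees that the simple generators indexed by these $i$ enter with exponent at most one, exactly as in Theorem \ref{mainthmchb}.

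Next I would identify the $\bold k$-chromatic polynomial with a coefficient of a power of the reciprocal Hilbert series. Writing $P(x)=\mathcal H(x)^{-1}$, the Cartier--Foata inversion from the theory of (super) heaps of pieces expresses $P(x)$ as the alternating clique polynomial of the super-commutation graph; combining this with Definition \ref{chmpoly} I will establish that, up to the explicit global sign, $\pi^{G}_{\bold k}(q)$ is the coefficient of $x^{\bold k}$ in $P(x)^{q}$, namely
\begin{equation*}
\pi^{G}_{\bold k}(q)=(-1)^{\htt(\eta(\bold k))}\,[x^{\bold k}]\,P(x)^{q},\qquad P(x)=\prod_{\bold m\ \mathrm{even}}(1-x^{\bold m})^{\mult(\eta(\bold m))}\prod_{\bold m\ \mathrm{odd}}(1+x^{\bold m})^{-\mult(\eta(\bold m))}.
\end{equation*}
The $q$-th power reflects the $q$ colours in the definition of the chromatic polynomial, while the even/odd split of the factors is dictated precisely by the super-PBW factorisation of $\mathcal H(x)=P(x)^{-1}$ recorded above.

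Finally I would extract the coefficient. Expanding each factor by the binomial series, $(1-x^{\bold m})^{\,q\mult(\eta(\bold m))}=\sum_{d}(-1)^d\binom{q\mult(\eta(\bold m))}{d}x^{d\bold m}$ for even $\bold m$ and $(1+x^{\bold m})^{-q\mult(\eta(\bold m))}=\sum_{d}\binom{-q\mult(\eta(\bold m))}{d}x^{d\bold m}$ for odd $\bold m$, the coefficient of $x^{\bold k}$ becomes a sum over all ways of writing $\eta(\bold k)$ as a weighted sum of connected weights. By Definition \ref{bond} such a decomposition is exactly a bond-lattice element $\bold J\in L_G(\bold k)$, the number of parts of a given weight being $D(J,\bold J)$ and the even and odd parts forming $\bold J_0$ and $\bold J_1$. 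The even factors then produce $\prod_{J\in\bold J_0}(-1)^{D(J,\bold J)}\binom{q\mult(\beta(J))}{D(J,\bold J)}$ and the odd factors produce $\prod_{J\in\bold J_1}\binom{-q\mult(\beta(J))}{D(J,\bold J)}$. Collecting one factor $-1$ for each even part gives $(-1)^{|\bold J_0|}=(-1)^{|\bold J|+|\bold J_1|}$, which together with the global factor $(-1)^{\htt(\eta(\bold k))}$ reproduces exactly the right-hand side of Theorem \ref{mainthmch}; setting $\Psi=\emptyset$, so that $\bold J_1=\emptyset$, recovers Theorem \ref{mainthmchb}.

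The hard part will be the super-sign bookkeeping concentrated in the passage to $P(x)$ and its $q$-th power, and I expect two points to require genuine care. First, I must justify the super-PBW factorisation piece by piece, namely that odd root spaces really contribute fermionic $(1+x^{\bold m})$ factors and not bosonic ones; this rests on the parity-homogeneity of $\mathcal{LS}_{\bold m}(G,\Psi)$ and on the super Lyndon heaps basis of Theorem \ref{lafps}, with the hypothesis $k_i\le 1$ on $\Psi_0$ being what prevents an odd simple root from contributing higher powers. Second, I must verify that the Cartier--Foata inversion and the bond-lattice reindexing remain compatible in the super setting, so that the factor $(1+x^{\bold m})^{-q\mult}$ attached to odd weights yields precisely $\binom{-q\mult(\beta(J))}{D(J,\bold J)}$ together with the additional sign $(-1)^{|\bold J_1|}$; this is the one genuinely new feature compared with \cite{akv17}. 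Once the generating-function identity is in place, the accompanying corollary follows by specialising $q$ and using the identification of Lemma \ref{identification lem} to read off $\mult(\eta(\bold k))$.
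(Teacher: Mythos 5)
Your proposal is correct, but it reaches Theorem \ref{mainthmch} by a genuinely different route than the paper. The paper works inside $\lie g$ and uses the denominator identity \eqref{denominator}: the technical Lemma \ref{helplem} on the coefficients of $\rho-w(\rho)+w(\gamma)$ shows that only pairs $(w,\gamma)$ with $w\in\mathcal{J}(\gamma)\cup\{e\}$ contribute, whence Proposition \ref{helprop} identifies $U^q[e^{-\eta(\bold k)}]$ with $(-1)^{\mathrm{ht}(\eta(\bold k))}\pi^{G}_{\bold k}(q)$ by matching $k$-tuples $(\gamma_i,w_i)$ with the independent-set tuples of $P_k(\bold k,G)$; the bond-lattice formula then comes from expanding the product side of \eqref{denominator}, which is exactly your last step. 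You instead never touch the Weyl group: you replace the sum side of the denominator identity by the super-PBW factorisation of the Hilbert series of the enveloping algebra of $\mathcal{LS}(G,\Psi)$ (available via Theorem \ref{lafps} and Proposition \ref{ubasis}) together with the Cartier--Foata inversion for heaps, giving $P(x)=\mathcal{H}(x)^{-1}=\sum_{F}(-1)^{|F|}x^{F}$ over independent sets $F$, and you obtain $\pi^{G}_{\bold k}(q)=(-1)^{\mathrm{ht}(\eta(\bold k))}[x^{\bold k}]P(x)^{q}$ from the bijection between $q$-tuples of independent sets of total weight $\bold k$ and proper $\bold k$-multicolourings. This buys elementarity (PBW plus a standard monoid inversion in place of Lemma \ref{helplem}) and makes the role of the hypothesis transparent: it enters exactly once, through Lemma \ref{identification lem}, to convert $\dim\mathcal{LS}_{\bold m}(G,\Psi)$ into $\mult(\eta(\bold m))$ for the weights $\bold m\le\bold k$, which are automatically free; the paper's route, by contrast, explains the same hypothesis through the multiset phenomenon in $\Omega$ noted in the remark after Proposition \ref{helprop}. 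Two small points you should still nail down to make the sketch airtight: first, the identity $[x^{\bold k}]P(x)^{q}=(-1)^{\mathrm{ht}(\eta(\bold k))}\pi^{G}_{\bold k}(q)$ needs the explicit correspondence $\tau\mapsto(F_1,\dots,F_q)$ with $F_j=\{i: j\in\tau(i)\}$ and the observation that the sign $(-1)^{\sum_j|F_j|}$ is the constant $(-1)^{\mathrm{ht}(\eta(\bold k))}$; second, in the reindexing you should record that weights with disconnected support contribute trivially (no super Lyndon heap has disconnected support, so the corresponding graded pieces of $\mathcal{LS}(G,\Psi)$ vanish and the binomial factors are $1$) and that $J\mapsto\beta(J)$ is a bijection from connected multisets onto their weights, so the sum lands precisely on $L_G(\bold k)$ with exponents $D(J,\bold J)$ and with $(-1)^{|\bold J_0|}=(-1)^{|\bold J|+|\bold J_1|}$, in accordance with Lemma \ref{bij}.
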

			We have the following corollary to the above theorem which gives us a recurrence formula for the root multiplicities of free roots of $\lie g$.
			\begin{cor}\label{recursionmult}
			We have
			\begin{equation*}\label{emult}
			\mult(\eta(\bold k)) = \sum\limits_{\ell | \bold k}\frac{\mu(\ell)}{\ell}\ |\pi^{G}_{\bold k/\ell}(q)[q]|\end{equation*} 
			if $\eta(\bold k) = \sum_{i \in I}k_i\alpha_i \in \Delta_0^+$ and 
			\begin{equation*}\label{omult}
			\mult(\eta(\bold k)) = \sum\limits_{\ell | \bold k}\frac{(-1)^{l+1} \mu(\ell)}{\ell}\ |\pi^{G}_{\bold k/\ell}(q)[q]|\end{equation*}
			if $\eta(\bold k) \in \Delta_1^+$ where $|\pi^{G}_{\bold k}(q)[q]|$ denotes the absolute value of the coefficient of $q$ in $\pi^{G}_{\bold k}(q)$ and $\mu$ is the M\"{o}bius function. See Example \ref{multf0} for an working example of this formula.
			
			If $k_i$'s are relatively prime (In particular if $k_i =1$ for some $i \in I$) then the above formula becomes much simpler: \begin{equation*}
			\mult(\eta(\bold k)) =  |\pi^{G}_{\bold k}(q)[q]|\end{equation*} for any $\eta(\bold k) \in \Delta^+$.
		\end{cor}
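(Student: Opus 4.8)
The plan is to deduce the corollary from Theorem \ref{mainthmch} by extracting the coefficient of $q$ from the displayed polynomial and then applying an elementary Witt-type M\"obius inversion on the poset of divisors of $\bold k$. First I would record the inversion lemma: for functions $g,h$ of $\bold k$ one has $g(\bold k)=\sum_{\ell\mid\bold k}\frac1\ell h(\bold k/\ell)$ if and only if $h(\bold k)=\sum_{\ell\mid\bold k}\frac{\mu(\ell)}{\ell}g(\bold k/\ell)$; writing $\bold k=n\bold k_0$ with $\bold k_0$ primitive, this is just the usual M\"obius inversion applied to $n\,g(n\bold k_0)=\sum_{e\mid n}e\,h(e\bold k_0)$. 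Granting this, the even case $\eta(\bold k)\in\Delta_0^+$ of the corollary is equivalent to the necklace identity
\begin{equation*}
\bigl|\pi^{G}_{\bold k}(q)[q]\bigr|=\sum_{\ell\mid\bold k}\frac1\ell\,\mult\bigl(\eta(\bold k/\ell)\bigr),
\end{equation*}
and the odd case to its signed analogue, so the whole problem reduces to computing the linear coefficient of $\pi^{G}_{\bold k}(q)$.

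Next I would compute $\pi^{G}_{\bold k}(q)[q]$ directly from Theorem \ref{mainthmch}. The key observation is that each factor $\binom{\pm q\,\mult(\beta(J))}{D(J,\bold J)}$ has vanishing constant term whenever $D(J,\bold J)\ge 1$, and its lowest-order term is linear in $q$, with $\binom{qm}{D}[q]=(-1)^{D-1}m/D$ and $\binom{-qm}{D}[q]=(-1)^{D}m/D$. Hence a product over $r$ distinct block-types contributes only in degree $q^{r}$ and above, so the coefficient of $q$ is supported exactly on those $\bold J\in L_G(\bold k)$ having a single block-type; for each divisor $\ell\mid\bold k$ this is the uniform partition into $\ell$ connected blocks of common weight $\eta(\bold k/\ell)$, and $\binom{q\,\mult(\eta(\bold k/\ell))}{\ell}[q]=(-1)^{\ell-1}\mult(\eta(\bold k/\ell))/\ell$ supplies precisely the weight $1/\ell$ in the necklace identity. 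Conceptually this is the heap-theoretic assertion that $|\pi^{G}_{\bold k}(q)[q]|$ enumerates the connected (cyclic) heaps of weight $\bold k$, whose grouping by period is the divisor sum, while $\mult(\eta(\bold k))=\dim\mathcal{LS}_{\bold k}(G)$ counts the primitive ones, i.e.\ the super Lyndon heaps of Theorem \ref{lafps}.

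The hard part will be the super-sign bookkeeping that separates the two formulas. Here I would argue by the parity of $\eta(\bold k)$: if $\eta(\bold k)\in\Delta_1^+$ then some $k_i$ is odd, so every divisor $\ell\mid\bold k$ is odd and, since parity is linear, every intermediate weight $\eta(\bold k/\ell)$ is again odd; the odd-block evaluation $\binom{-q\,\mult}{\ell}[q]=(-1)^{\ell}\mult/\ell$ then introduces the sign $(-1)^{\ell}$, which after normalising by the global factor $(-1)^{\htt(\eta(\bold k))}$ becomes exactly the $(-1)^{\ell+1}$ of the statement and propagates through the inversion. If $\eta(\bold k)\in\Delta_0^+$ the even and odd block-types must be recombined using $(-1)^{\htt(\eta(\bold k))}$ and $(-1)^{|\bold J|+|\bold J_1|}$, and the delicate point is to verify that these signs are globally consistent so that the absolute values $|\pi^{G}_{\bold k/\ell}(q)[q]|$ assemble into the stated M\"obius sum; checking this consistency, together with the combinatorial fact that the single-block-type (uniform) element of $L_G(\bold k)$ is unique for each $\ell$, is the only real obstacle. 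Finally, when the $k_i$ are relatively prime the poset of divisors is trivial, only $\ell=1$ survives with $\mu(1)=1$, and both formulas collapse to $\mult(\eta(\bold k))=|\pi^{G}_{\bold k}(q)[q]|$, giving the last assertion at once.
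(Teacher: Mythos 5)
Your route is essentially the paper's computation in a different packaging: the paper applies $-\log$ to both sides of the denominator identity \eqref{denominator} — the sum side gives $|\pi^{G}_{\bold k}(q)[q]|$ via the $P_k(\bold k,G)$ counts from the proof of Proposition~\ref{helprop}, and the product side gives the divisor sums directly from the expansions of $\log(1-e^{-\alpha})$ and $\log(1+e^{-\alpha})$ — whereas you extract the linear coefficient in $q$ from the bond-lattice formula of Theorem~\ref{mainthmch}, i.e.\ you take $\partial_q\big|_{q=0}$ downstream instead of $\log$ upstream. These compute the same identity, and your supporting steps are correct: only single-block-type $\mathbf J$ contribute to the coefficient of $q$; for each $\ell\mid\bold k$ that element is unique (the multiset $J$ with multiplicities $\bold k/\ell$, whose support equals $\supp\bold k$ and is connected); $\binom{qm}{D}[q]=(-1)^{D-1}m/D$ and $\binom{-qm}{D}[q]=(-1)^{D}m/D$; your Möbius inversion lemma is the standard one; and your odd-case parity argument ($\ell$ and all quotients forced odd) is complete and matches the paper.

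However, the ``delicate point'' you defer in the even case is a genuine gap, and it cannot be closed as you hope: the signs are \emph{not} globally consistent. Carrying out your own evaluation, a divisor $\ell$ with $\eta(\bold k/\ell)\in\Delta_+^1$ (which forces $\ell$ even when $\eta(\bold k)\in\Delta_+^0$) enters with weight $-1/\ell$, so what one actually obtains is
\begin{equation*}
|\pi^{G}_{\bold k}(q)[q]|\;=\;\sum_{\substack{\ell\mid\bold k\\ \eta(\bold k/\ell)\in\Delta_+^0}}\frac{\mult(\eta(\bold k/\ell))}{\ell}\;-\;\sum_{\substack{\ell\mid\bold k\\ \eta(\bold k/\ell)\in\Delta_+^1}}\frac{\mult(\eta(\bold k/\ell))}{\ell},
\end{equation*}
not the all-positive sum that the stated even-case formula inverts. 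Concretely, take $I=\Psi=\{1\}$ with $a_{11}=-2$: then $\lie n^+$ is the free Lie superalgebra on one odd generator, $\mult(\alpha_1)=\mult(2\alpha_1)=1$ and $\pi^{G}_{(k)}(q)=\binom{q}{k}$; the displayed signed identity checks out ($1-\tfrac12=\tfrac12=|\binom q2[q]|$), while the corollary's even-case formula would give $\mult(2\alpha_1)=\tfrac12-\tfrac12\cdot 1=0$. Note that the paper's own proof makes exactly the same silent leap — its first display of the recursion ignores the alternating $(-1)^{\ell-1}/\ell$ coming from $\log(1+e^{-\alpha})$ — so the obstacle you flagged is precisely where both arguments (and, for mixed parities, the even-case statement itself) require the parity-dependent sign; when the $k_i$ are relatively prime only $\ell=1$ survives and the final assertion is unaffected.
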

			We also discuss why the expression given in Theorem \ref{mainthmch} exists only for free roots explaining the main assumptions made in \cite{VV15} and \cite{akv17}. Various examples explaining our results are provided throughout the paper.
			
The paper is organized as follows. In Section \ref{section2}, the definition and the basic results on the Borcherds Kac Moody Lie superalgebra are given. In Section \ref{identificationsec}, we construct the Lyndon heaps basis of free root spaces of BKM superalgebras. In Section \ref{section 5}, we construct the LLN basis of free root spaces of BKM superalgebras. In Section \ref{multis}, we study the further combinatorial properties of free roots of BKM superalgebras. 

{\em Acknowledgments. The authors thank Tanusree Khandai for many helpful discussions on Lie superalgebras and also for her constant support. The first author acknowledges the CSIR research grant: 09/947(0082)/2017-EMR-I. The second author thanks Apoorva Khare for his constant support and also acknowledges the National Board for Higher Mathematics postdoctoral research grant: 0204/7/2019/R$\&$D-II/6831.} 


\section{Structure theory of BKM superalgebras} \label{section2} In this section, we recall the basic properties and the denominator identity of BKM superalgebras from \cite{WM01}.  The theory of BKM superalgebras can also be seen in \cite{UR95}.
Our base field will be complex numbers throughout the paper. 
\subsection{Generators and Relations}\label{sec 2.1}
Let $I = \{1,2,\dots,n\}$ or the set of natural numbers. Fix a subset $\Psi$ of $I$ to describe the odd simple roots.
A complex matrix  
$A=(a_{ij})_{i,j\in I}$ together with a choice of $\Psi$ is said to be a Borcherds-Kac-Moody supermatrix (BKM supermatrix in short) if the following conditions are satisfied:
For $i,j \in I$ we have
\begin{enumerate}
	\item $a_{ii}=2$ or $a_{ii}\leq 0$.
	\item $a_{ij}\leq 0$ if $i\neq j$.
	\item $a_{ij} = 0$ if and only if $a_{ji} =0$.
	\item $a_{ij} \in\mathbb{Z}$ if $a_{ii}=2$.
	\item $\a_{ij} \in2\mathbb{Z}$ if $a_{ii}=2$ and $i \in \Psi$.
	\item $A$ is symmetrizable, i.e., $DA$ is symmetric for some diagonal matrix $D=\mathrm{diag}(d_1, \ldots, d_n)$ with positive entries.
\end{enumerate} 
Denote by $I^{\mathrm{re}}=\{i\in I: a_{ii}=2\}$, $I^{\mathrm{im}}=I\backslash I^{\mathrm{re}}$, $\Psi^{re} = \Psi \cap I^{re}$, and $\Psi_0 = \{i \in \Psi : a_{ii}= 0\}$. 
The Borcherds-Kac-Moody Lie superalgebra (BKM superalgebra in short) associated with a BKM supermatrix $(A,\Psi)$ is the Lie superalgebra $\lie g(A,\Psi)$ (simply $\lie g$ when the presence of $A$ and $\Psi$ are understood)  generated by $e_i, f_i, h_i, i \in I$ with the following defining relations \cite[Equations (2.10)-(2.13), (2.24)-(2.26)]{WM01}:
\begin{enumerate}
	\item $[h_i, h_j]=0$ for $i,j\in I$,
	\item $[h_i, e_j]=a_{ij}e_j$,  $[h_i, f_j]=-a_{ij}f_j$ for $i,j\in I$,
	\item $[e_i, f_j]=\delta_{ij}h_i$ for $i, j\in I$,
	\item $\deg h_i = 0, i \in I$,
	\item $\deg e_i = 0 = \deg f_i$ if $i \notin \Psi$,
	\item $\deg e_i = 1 = \deg f_i$ if $i \in \Psi$,
	\item $(\text{ad }e_i)^{1-a_{ij}}e_j=0 = (\text{ad }f_i)^{1-a_{ij}}f_j$ if $i \in I^{re}$ and $i \ne j$,
	\item $(\text{ad }e_i)^{1-\frac{a_{ij}}{2}}e_j=0 = (\text{ad }f_i)^{1-\frac{a_{ij}}{2}}f_j$ if $i \in \Psi^{re}$ and $i \ne j$,
	\item $(\text{ad }e_i)^{1-\frac{a_{ij}}{2}}e_j=0 = (\text{ad }f_i)^{1-\frac{a_{ij}}{2}}f_j$ if $i \in \Psi_0$ and $i = j$,
	\item $[e_i, e_j]= 0 = [f_i, f_j]$ if $a_{ij}=0$. 
\end{enumerate}
The relations (7), (8) and (9) are called the Serre relations of $\lie g$. We define $I_j = \{i \in I: \deg e_i = j\}$ for $j=0,1$ and theses sets will be identified with the set of even and odd simple roots of $\lie g$ respectively.
\begin{rem}
	If $\Psi$ is the empty set then $(A,\Psi)$ becomes a Borcherds Cartan matrix and the resulting Lie algebra $\lie g(A)$ is a Borcherds algebra. Assume that $a_{ii} \ne 0$. If $i \in I \backslash \Psi$, then the Lie subsuperalgebra $S_i = \mathbb{C}f_i \oplus \mathbb{C}h_i  \oplus \mathbb{C}e_i$ of the BKM superalgebra $\lie g$ is isomorphic to $\mathfrak{sl}_2$ and if $i \in \Psi$, then the Lie sub-superalgebra $S_i = \mathbb{C}[f_i,f_i]\oplus \mathbb{C}f_i \oplus \mathbb{C}h_i \oplus \mathbb{C}e_i \oplus \mathbb{C}[e_i,e_i]$ is isomorphic to $\lie {sl}(0,1)$. If $a_{ii}=0$, the Lie sub-superalgebra $S_i = \mathbb{C}f_i \oplus \mathbb{C}h_i  \oplus \mathbb{C}e_i$ is isomorphic to the three dimensional Heisenberg Lie algebra (resp. superalgebra) if $i \in I \backslash \Psi$ (resp. if $i \in \Psi$). The conditions (5) and (6) defines a $\mathbb Z_2$ gradation on $\lie g$ which makes it a Lie superalgebra. The presence of $\lie {sl}(0,1)$ explains the appearances of even integers in the definition of BKM supermatrix. This fact is also reflected in condition (8) of the defining relations of $\lie g$. This is one of the main structural difference between Borcherds algebras and the BKM superalgebras.
\end{rem}

\subsection{Quasi Dynkin diagram}\label{section3}
First, we define the notion of a supergraph.
\begin{defn}\label{sg}
	Let $G$ be a finite/countably infinite simple graph with vertex set $I$. Let $\Psi \subseteq I$ be a subset of the vertex set. Then the pair $(G,\Psi)$ is said to be a supergraph and the vertices in $\Psi$ (resp. $I \backslash \Psi$) are said to be odd (resp. even) vertices of $G$. Let $A$ be the classical adjacency matrix of the graph $G$. Then the pair $(A,\Psi)$ is said to be the adjacency matrix of the supergraph $(G,\Psi)$.
\end{defn}
\begin{defn}\label{qdd}
	The quasi Dynkin diagram of a BKM superalgebra is defined as follows \cite[Section 2.1]{WM01}. Let $(A=(a_{ij}),\Psi)$ be a BKM supermatrix and let $\lie g$ be the associated BKM superalgebra. The quasi Dynkin diagram of $\lie g$ is the supergraph $(G,\Psi)$ with vertex set $I$ and two vertices $i,j \in 
	I$ are connected by an edge if and only if $a_{ij} \ne 0$.   We often refer to $(G,\Psi)$ simply as the graph of $\lie g$. An example of a quasi Dynkin diagram of a BKM superalgebra $\lie g$ is given in Example \ref{basis1_ex1}.
\end{defn}

\begin{rem}
	We observe that the quasi Dynkin diagram of $\lie g$ is the underlying simple graph of the classical Dynkin diagram of $\lie g$ \cite[Definition 2.4 above]{WM01}. In other words, the quasi Dynkin diagram can be obtained from the Dynkin diagram of $\lie g$ by replacing all the multi edges with a single edge. In Section \ref{multis}, we find a connection between root multiplicities of a BKM superalgebra $\lie g(A)$ and the chromatic polynomial of the associated supergraph $(G,\Psi)$. We will see that the chromatic polynomial of $G$ depends on whether two vertices of $G$ are adjacent or not but not on the actual number of edges connecting them. Therefore, we work with quasi Dynkin diagrams instead of the Dynkin diagrams. 
\end{rem}

For any subset $S\subseteq \Pi$, we denote by $|S|$ the number of elements in $S$. The subgraph induced by the subset $S$ is denoted by $G_S$. 
We say a subset $S\subseteq \Pi$ is \em{connected} if the corresponding subgraph $G_S$ is connected. Also, we say $S$ is {\em{independent}} if there is no edge between any two  elements of $S$, i.e., $G_S$ is totally disconnected.

\subsection{Root system and the Weyl group}
Let $\Delta$ be the root system of a BKM superalgebra $\lie g$ \cite[Section 2.3]{WM01}. Let $\Pi$ the set of simple roots of $\lie g$. 
Define $Q:=\bigoplus _{\a\in \Pi}\mathbb{Z}\alpha,\ \ Q_+ :=\sum _{\a\in \Pi}\mathbb{Z}_{+}\alpha.$ 
\begin{defn}\label{free def}
	An element $\alpha= \sum_{i \in I}k_i \alpha_i \in Q_+$ (or its weight $\bold k = (k_i : i \in I) \in \mathbb Z_+[I]$) is said to be free if $k_i \le 1 $ for $i \in I^{re}\sqcup \Psi_0$.
\end{defn}
\begin{rem}
	In Lemma \ref{identification lem}, we will show that any root space of a BKM superalgebra that corresponds to a free root is independent of (or free from) the Serre relations.
\end{rem}

The set of positive roots is denoted by  $\Delta_+:=\Delta\cap Q_+$. 
All the root spaces of $\lie g$ are finite dimensional and for any $\alpha \in \lie h^{*}$ either $\lie g_{\alpha} \subset \lie g_0$ or $\lie g_{\alpha} \subset \lie g_1$, i.e, every root is either even or odd. Set $\Delta_+^0$ (resp. $\Delta_+^{1}$) to be the set of positive even (resp. odd) roots.
\noindent
We have a triangular decomposition
$\lie g\cong \lie n^{-}\oplus \lie h \oplus \lie n^+,$
where
$\lie n^{\pm}=\bigoplus_{\alpha \in \pm\Delta_{+}}
\mathfrak{g}_{\alpha}.$
Given $\gamma=\sum_{i\in I}k_i\alpha_i\in Q_+$, we set $\text{ht}(\gamma):=\sum_{i\in I}k_i.$ 
The real vector space spanned by $\Delta$ is denoted by $R=\mathbb{R}\otimes_{\bz} Q$. 
For $\alpha\in \Pi^{\mathrm{re}}$, define the linear isomorphism $\bold{s}_\alpha$ of $R$ by 
$\bold{s}_\alpha(\lambda)=\lambda-2\frac{(\lambda,\alpha)}{(\alpha,\alpha)}\alpha,\ \ \lambda\in R.$
Note that the simple reflections are defined for odd real simple roots also. The Weyl group $W$ 
of $\mathfrak{g}$ is the subgroup of $\mathrm{GL}(R)$ generated by the simple reflections $\bold{s}_\alpha$, $\alpha\in \Pi^\mathrm{re}$.
Note that the above bilinear form is $W$--invariant and $W$ is a Coxeter group with canonical generators $\bold{s}_\alpha, \a\in \Pi^\mathrm{re}$. 
Define the length of $w\in W$ by  $\ell(w):=\mathrm{min}\{k\in \mathbb{N}: w=\bold{s}_{\a_{i_1}}\cdots \bold{s}_{\a_{i_k}}\}$ and any expression $w=\bold{s}_{\a_{i_1}}\cdots \bold{s}_{\a_{i_k}}$ with 
$k=\ell(w)$ is called a reduced expression. The set of real roots is denoted by $\Delta^{\mathrm{re}}=W(\widetilde{\Pi}^{\mathrm{re}})$ and the set of imaginary roots is denoted by $\Delta^\mathrm{im}=\Delta\backslash \Delta^\mathrm{re}$. 
Equivalently, a root $\alpha$ is real if and only if $(\alpha, \alpha)> 0$ and else imaginary. 
Let $\rho$ be any element of $\lie h^*$ satisfying $2(\rho,\alpha)=(\alpha,\alpha)$ for all $\a\in \Pi$.

\subsection{Denominator identity of BKM superalgebras}
Let $\Omega$ be the set of all $\gamma \in Q_+$ such that 
\begin{enumerate}
	\item $\gamma = \sum_{j=1}^r \alpha_{i_j} + \sum_{k=1}^s l_{i_k}\beta_{i_k}$ where the $\alpha_{i_j}$ (resp. $\beta_{i_k}$) are  distinct even (resp. odd) imaginary simple roots,
	\item $(\alpha_{i_j},\alpha_{i_k}) = (\beta_{i_j},\beta_{i_k}) = 0$ for $j \ne k$; $(\alpha_{i_j},\beta_{i_k}) =0$ for all $j,k$;
	\item if $l_{i_k} \ge 2$, then $(\beta_{i_k},\beta_{i_k}) = 0$.
\end{enumerate}
The following denominator identity of BKM superalgebras is proved in \cite[Section 2.6]{WM01}:



\begin{eqnarray}\label{denominator}
U:=  \sum_{w \in W }  \sum_{ \gamma \in
	\Omega} \epsilon(w) \epsilon(\gamma) e^{w(\rho -\gamma)-\rho } & =& \frac{\prod_{\alpha \in \Delta_+^0} (1 - e^{-\alpha})^{\mult(\alpha)}} {\prod_{\alpha \in \Delta_+^1} (1 + e^{-\alpha})^{\mult(\alpha)}}
\label{eq:0,1denom} \end{eqnarray}  
where $\mult(\alpha) = \dim \lie g_{\alpha}, \epsilon(w) = (-1)^{l(w)}$ and $\epsilon(\gamma) = (-1)^{\htt \gamma}$.
\begin{rem}
	If $\Psi$ is the empty set then Equation \eqref{denominator} reduces to the denominator identity of the Borcherds algebras. Further, if $I^{im}$ is also empty then Equation \eqref{denominator}  reduces to the denominator identity of the Kac-Moody algebras.
\end{rem}

\section{Main result I: Lyndon basis of BKM superalgebras}\label{identificationsec}
In this section, we identify the free root spaces of a BKM superalgebra with the grade spaces of free partially commutative Lie superalgebra. Using this identification, we construct the Lyndon basis for the free root spaces of a BKM superalgebra $\lie g$. We start with the definition of free partially commutative Lie superalgebras.

\subsection{Free partially commutative Lie superalgebra $\mathcal{LS}(G,\Psi)$} 

Given a supergraph $(G,\Psi)$, the associated free partially commutative Lie superalgebra $\mathcal{LS}(G,\Psi)$ is defined as follows. First, we define the free Lie superalgebra on a $\mathbb Z_2$-graded set.
\begin{defn}
	Let $I = I_0 \sqcup I_1$ be a non-empty superset  ($\mathbb Z_2$-graded set). Let $I^{\ast}$ be the free monoid generated by $I$. A word $\bold w \in I^{\ast}$ is called even if the number of odd alphabets (i.e. the elements of $I_1$) in $\bold w$ is even otherwise it is called odd. This defines a $\mathbb Z_2-$gradation on $I^*$. Let $V$ be the free super vector space on the set $I$ and let $T(V)$ be the tensor algebra on $V$. The algebra $T(V)$ has an induced $\mathbb Z_2$-gradation makes it an associative superalgebra for which $I^{*}$ is a basis. Since $T(V)$ is associative, it has a natural Lie superalgebra structure. Given this, the free Lie superalgebra on the superset $I = I_0 \sqcup I_1$ is defined to be the smallest Lie subsuperalgebra of $T(V)$ containing $I$. We denote the free Lie superalgebra on the superset $I$ by $\mathcal{FLS}(I)$. 
\end{defn}
\begin{rem}\label{rootrootvector}
	If $I_1$ is the empty set then $\mathcal{FLS}(I)$ is the free Lie algebra on set $I_0$. Whenever we talk about a free Lie superalgebra on a set $I = \{1,\dots,n\}$ or $\{1,2,\dots\}$, we consider the elements of $I$ as $\{e_1,\dots,e_n\}$ and $\{e_1,e_2, \dots\}$ instead  $I$. This way, we can relate the elements of $\mathcal{FLS}(I)$ with the elements of a free root spaces of a BKM superalgebra $\lie g$.
	
	
\end{rem}
\begin{defn}
	Let $(G,\Psi)$ be a supergraph with the vertex set $I$ and the edge set $E$. Let $\mathcal{FLS}(I)$ be the free Lie superalgebra on the set $I = I_0 \sqcup I_1$ where we take $I_1$ to be $\Psi$. Let $J$ be the ideal in $\mathcal{FLS}(I)$ generated by the relations $\{[e_i,e_j] : \{i,j\} \notin E \}$ [c.f. Remark \ref{rootrootvector}]. The quotient algebra $\frac{\mathcal{FLS}(I)}{J}$, denoted by $\mathcal{LS}(G,\Psi)$, is the free partially commutative Lie superalgebra associated with the supergraph $(G,\Psi)$. When $\Psi$ is the empty set, $\mathcal{LS}(G,\Psi)$ is the free partially commutative Lie algebra associated with the graph $G$ and is denoted by $\mathcal{L}(G)$. It is well-known that $\mathcal{FLS}(I)$ and hence $\mathcal{LS}(G,\Psi)$ is graded by $\mathbb{Z}_{+}[I]$.
\end{defn}

\subsection{Free partially commutative super monoid}\label{fpcsm} 

Let $(G,\Psi)$ be a supergraph with a finite/countably infinite vertex set $I = I_0 \sqcup I_1$ (where $I_1 = \Psi$). We assume that $I$ is totally ordered. Let $I^*$ be the free monoid generated by $I$.  We note that $I^*$ is totally ordered by the lexicographical order. 
The free partially commutative super monoid associated with a supergraph $(G,\Psi)$ is denoted by $M(I, G,\Psi):=I^*/\sim$, where $\sim$ is generated by the relations $ab\sim ba, \text{ if } (a, b)\notin E(G). $ When $\Psi$ is empty,  $M(I, G,\Psi)$ is called the free partially commutative monoid associated with the graph $G$ and denoted simply by $M(I, G)$.
We observe that $M(I,G,\Psi)$ has a natural $\mathbb Z_2$-gradation induced from the $\mathbb Z_2$-gradation of $I^*$. We associate with each element $[a] \in M(I,G,\Psi)$ a unique element $\tilde{a}\in I^*$ which is the maximal element in $[a]$ with respect to the lexicographical order. We call this element the standard word of the class $[a]$ and denoted by $\st([a])$. A total order on $M(I, G,\Psi)$ is then given by
\begin{equation}\label{totor}[a]<[b] :\Leftrightarrow \st{[a]}<\st{[b]}.\end{equation}

Next, we explain the Lyndon heaps basis of free partially commutative Lie algebras. For this reason, in the next subsection, we give the essential definitions from the theory of heaps of pieces to define pyramids and Lyndon heaps from \cite{la95}. 
\subsection{Heaps monoid, Pyramids, and Lyndon heaps}\label{pyramidsec}
Heaps of pieces were introduced by Xavier Viennot in \cite{xv89}. He proved many combinatorial results on heaps of pieces and gave applications of heaps of pieces to a wide range of areas: directed animals, polyominoes, Motzkin paths, and orthogonal polynomials, Rogers-Ramanujan identities, fully commutative elements in Coxeter groups, Bessel functions, Lorentzian quantum gravity and may many more applications in mathematical physics. 
In \cite{a3, vgx06,MR2002764}, special types of heaps namely pyramids are the important tools in proving results. Heaps of pieces have also applications in the representation theory of complex simple Lie algebras \cite{rmg}. In this book, the combinatorial aspects of minuscule representations are studied using heaps of pieces.  In \cite{a3}, the connection between heaps of piece , chromatic polynomials and the free partially commutative Lie algebras is discussed along with many other combinatorial properties of heaps of pieces.

Let $(G,\Psi)$ be a supergraph with a (finite/countably infinite) totally ordered vertex set $I = \{\alpha_1,\dots,\alpha_k\}$ or $I = \{\alpha_1,\alpha_2,\dots\}$. 
Let $\zeta$ be the concurrency relation complement to the commuting relation $\sim$ on $I^{*}$ [c.f. Section \ref{fpcsm}]. A \emph{pre-heap} $E$ over $(I,\zeta)$ is a \textbf{finite} subset of $I \times \{0,1,2, \dots \}$ satisfying, if $(\alpha_1,m),(\alpha_2,n) \in E$ with $\alpha_1 \,\zeta\, \alpha_2$, then $m \ne n$. Each element $(\alpha,m)$ of $E$ is called a basic piece.  If $(\alpha,m) \in E$, we write $\pi(\alpha,m) = \alpha $ (the position of the piece $(\alpha,m)$) and $h(\alpha,m) = m$ (the level of the piece $(\alpha,m)$). A basic piece will be simply denoted by $\alpha$ when we don't need to emphasize the level. The set $\pi(E)$ is defined to be the set of all positions occupied by the pieces of $E$.
A pre-heap $E$ defines a partial order $\le_E$ by taking the transitive closure of the relation : 
$(\alpha_1,m) \le_E (\alpha_2,n)$ if $\alpha_1 \zeta \alpha_2$ and $m < n$. 
We say that two heaps $E$ and $F$ are \emph{isomorphic} if there exists a position preserving order isomorphism $\phi$ between $(E,\le_E) $ and $(F,\le_F)$. 
A \emph{heap} $E$ over $(I,\zeta)$ is a pre-heap over $(I,\zeta)$ such that: if $(\alpha,m) \in E$ with $m > 0$ then there exists $(\beta,m-1) \in E$ such that $\alpha \zeta \beta$.
Every isomorphism class of pre-heaps contains exactly one heap and this is the unique pre-heap $E$ in the class for which $\sum_{\alpha \in E}h(\alpha)$ is minimal. 
\begin{rem}
	The graph $G$ can have a countably infinite number of vertices, but each heap $E$ over the graph $G$ has only a finite number of pieces by definition. This fact leads to a natural $\mathbb Z_+[I]$-gradation on the set of all heaps over the graph $G$. 
\end{rem}	


Let $\mathcal{H}(I,\zeta)$ be the set of all heaps over $(I,\zeta)$. This set can be made into a monoid with a product called the \emph{superposition} of heaps. To get superposition $E \circ F$ of $F$ over $E$, let the heap $F$ `fall' over $E$. Let $\mathcal{H}_{\bold k}(I,\zeta)$ be the set of all heaps of weight $\bold k$ for $\bold k \in \mathbb{Z}_{+}[I]$ where the weight counts the number of pieces in each of the positions. This gives a $\mathbb{Z}_{+}[I]$-gradation on $\mathcal{H}(I,\zeta)$.
We define a map $\psi : I^* \rightarrow \mathcal{H}(I,\zeta)$ as follows:  For a word $p_1\,p_2\, \cdots \,p_k \in I^*$ define $\psi(p_1\,p_2\, \cdots \,p_k) = p_1 \,\circ\, p_2 \,\circ \cdots \circ\, p_k $. Note that $\psi^{-1}(E)$ is the set of all linear orders compatible with $\le_E$. It is clear that $\psi$ extends to  weight and order-preserving isomorphism of the monoids $M(I,G,\Psi)$ and $\mathcal{H}(I,\zeta)$. This defines a total order on $\mathcal H(I,\zeta)$. It also defines a $\mathbb Z_2$-grading $\mathcal H(I,\zeta) = \mathcal H_0(I,\zeta)  \oplus \mathcal H_1(I,\zeta) $. The standard word of a heap $E$ is defined to be $\st(E) = \st(\psi^{-1}(E))$ [c.f. Equation \eqref{totor}].
For a heap $E$, $\min E$ is the heap composed of minimal pieces of $E$ with respect to $\le_E$ and $\max E$ is defined similarly. We write $|E|$ for the number of pieces in $E$ and $|E|_{\alpha}$ for the number of pieces of $E$ in the position $\alpha$. A heap $E$ such that $\min (E) = \{ \alpha \}$ is said to be a \emph{pyramid} with the basis $\alpha$. The set of all pyramids in $\mathcal{H}(I,\zeta)$ is denoted by $\mathcal{P}(I,\zeta)$ and the set of all pyramids with basis $\alpha_i$ is denoted by $\mathcal{P}^{i}(I,\zeta)$. 

Let $E$ be a heap, we say that $E$ is \emph{periodic} if there exists a heap $F \ne 0$ (0 - empty heap) and an integer $k \ge 2$ such that $E = F^k$.
Similarly, $E$ is \emph{primitive} if $E = U \circ V = V \circ U$ then either $U = 0$ or $V = 0.$
Pyramids in which the minimum piece has the lowest position (with respect to the total order on $I$) are known as \emph{admissible pyramids}. 
A pyramid $E$ with the basis $p$ such that $|E|_p = 1$ is said to an elementary pyramid.
An admissible pyramid that is also elementary is known as a \emph{super-letter}. The set of all super-letters in $\mathcal H(I,\zeta)$ is denoted by $\mathcal A(I,\zeta)$.
A heap $E$ in $\mathcal{H}(I,\zeta)$ is said to be \emph{multilinear} if every basic piece occurs exactly once in $E$. 

Let $E$ be a heap. If $E$ = $U \circ V$ for some heaps $U$ and $V,$ we say that $V \circ U$ is a \emph{transpose} of $E$. The transitive closure of transposition is an equivalence relation on $\mathcal{H}(I,\zeta)$, which we call the conjugacy relation of heaps and is denoted by $\sim_c$.    
A non-empty heap $E$ is said to be \emph{Lyndon} if $E$ is primitive and minimal in its conjugacy class. We write $\mathcal{LH}(I,\zeta)$ for the set of all Lyndon heaps over the super graph $(G,\Psi)$.

\begin{rem}\label{pyrcon}
	According to Viennot, a pyramid is a heap with a unique maximal piece \cite[Definition 5.9]{xv89}. 
	In this paper, we follow Lalonde's convention on pyramids \cite{la95}, i.e., A pyramid is a heap with a unique minimal piece.
\end{rem}
Given the definition of Lyndon heaps, in the next subsection, we explain the Lyndon heaps basis of free partially commutative Lie algebras.
\subsection{Lyndon heaps basis of free partially commutative Lie algebras}
In this subsection, we recall the Lyndon heaps basis of Lalonde from \cite{la93}. 
If $ E$ is a Lyndon heap then the standard factorization $\sum( E)$ of $E$ is given by $\sum( E) = ( F, N)$, where \begin{enumerate}
	\item $F \ne 0 \,\,\,(\text{empty heap})$
	\item $E =  F \circ  N$
	\item $N$ is Lyndon
	\item $N$ is minimal in the total order on $\mathcal H(I,\zeta)$.
\end{enumerate}
To each Lyndon heap $E \in \mathcal H(I,\zeta)$ we associate a Lie monomial $\Lambda(E)$ in $\mathcal{L}(G)$ as follows. If $E\in I$, then $\Lambda(E)=E$ and otherwise $\Lambda( E)=[\Lambda(F_1),\Lambda(F_2)]$, where $\Sigma(E)=(F_1, F_2)$ is the standard factorization of $E$. Given these notions, we have the following theorem which gives the Lyndon basis of the free partially commutative Lie algebra $\mathcal{L}(G)$.
\begin{thm}\label{lafp}\cite{la93}
	The set $\{\Lambda(E): E\in \mathcal H(I,\zeta) \text{ is a Lyndon heap}\}$ forms a basis of $\mathcal {L}(G)$.
\end{thm}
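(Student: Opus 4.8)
The plan is to prove this by the Poincar\'e--Birkhoff--Witt theorem combined with a unitriangularity argument, the exact heap analogue of the classical proof that standard bracketings of Lyndon words form a basis of the free Lie algebra. The starting point is the identification of the universal enveloping algebra $U(\mathcal{L}(G))$ with the monoid algebra $\bc[M(I,G)]$ of the free partially commutative (trace) monoid; under the isomorphism $\psi$ of Section \ref{pyramidsec} this says that the heaps $\mathcal{H}(I,\zeta)$ form a $\bc$-basis of $U(\mathcal{L}(G))$. Since $\mathcal{L}(G)$ is $\mathbb{Z}_{+}[I]$-graded with finite-dimensional graded pieces (each multidegree supports only finitely many heaps), I can work degree by degree and use dimension counts freely.

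The first combinatorial ingredient I would establish is a Chen--Fox--Lyndon factorization for heaps: every heap $E \in \mathcal{H}(I,\zeta)$ factors uniquely as a superposition $E = L_1 \circ L_2 \circ \cdots \circ L_r$ of Lyndon heaps with $L_1 \ge L_2 \ge \cdots \ge L_r$ in the total order on $\mathcal{H}(I,\zeta)$. Existence follows by repeatedly splitting off the minimal Lyndon factor; uniqueness rests on the defining properties of Lyndon heaps --- primitivity together with minimality in the conjugacy class $\sim_c$ --- and on the compatibility of the standard word map $\st$ with superposition. The payoff is that non-increasing sequences of Lyndon heaps are in bijection with all of $\mathcal{H}(I,\zeta)$, hence index a basis of $U(\mathcal{L}(G))$.

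The second, and technically heavier, ingredient is a leading-term lemma: for a Lyndon heap $E$, the image of $\Lambda(E)$ in $\bc[M(I,G)] = U(\mathcal{L}(G))$ expands as $E$ (with coefficient $1$) plus a $\bc$-combination of heaps of the same weight that are strictly larger than $E$ in the total order on $\mathcal{H}(I,\zeta)$. I would prove this by induction on $|E|$ using the standard factorization $\Sigma(E) = (F_1,F_2)$ and the definition $\Lambda(E) = [\Lambda(F_1),\Lambda(F_2)]$: upon expanding the bracket, the term $\st(F_1)\st(F_2)$ reassembles to the leading heap $E$, while the opposite term and all inductive corrections are strictly larger. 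Combined with the factorization above, the same reasoning shows that each ordered product $\Lambda(L_1)\Lambda(L_2)\cdots\Lambda(L_r)$ has leading heap $L_1 \circ \cdots \circ L_r$, so the transition matrix from these products to the heap basis of $U(\mathcal{L}(G))$ is unitriangular in each multidegree; hence these ordered products themselves form a basis of $U(\mathcal{L}(G))$.

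Finally I would invoke the converse direction of PBW. Linear independence of $\{\Lambda(E)\}$ in $\mathcal{L}(G)$ is immediate, since a nontrivial relation would be a nontrivial relation among the length-one ordered monomials in $U(\mathcal{L}(G))$, which the previous step shows are part of a basis. Spanning then follows from the graded dimension count: in every multidegree $\bold k$ the PBW basis just produced forces the number of Lyndon heaps of weight $\bold k$ to equal $\dim \mathcal{L}_{\bold k}(G)$, so the independent family $\{\Lambda(E)\}$ must exhaust each graded piece. I expect the main obstacle to be the uniqueness half of the heap Chen--Fox--Lyndon factorization together with the bookkeeping in the leading-term lemma: unlike the totally ordered word case, superposition of heaps is only partially commutative, so one must verify carefully that reordering pieces within a conjugacy class never disturbs the standard word of the leading heap. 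Everything else is formal once these two lemmas are in place.
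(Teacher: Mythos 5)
Your proposal is correct in outline, but it takes a genuinely different route from the paper in the spanning half. You share the first two ingredients with the argument developed here: the identification of heaps with a basis of the universal enveloping algebra is Proposition \ref{ubasis}, and your leading-term lemma is exactly Proposition \ref{shush5}(i),(iii), which the paper attributes to \cite[Theorem 4.2]{la93}; this much gives linear independence in both treatments. For spanning, however, the paper (following Lalonde, and extended to the super case in the proof of Theorem \ref{lafps}) never counts dimensions: it proves the rewriting statement $[\Lambda(L),\Lambda(M)]\in \operatorname{span}\{\Lambda(N): N<M,\ \deg N=\deg(L\circ M)\}$ (Proposition \ref{shush7}, whose case (i) is \cite[Theorem 4.4]{la93}), so that the span of the $\Lambda(E)$ is a Lie subalgebra containing the generators $I$ and hence equals $\mathcal L(G)$. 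You instead deduce spanning from a heap Chen--Fox--Lyndon factorization, unitriangularity of the ordered products $\Lambda(L_1)\cdots\Lambda(L_r)$, and a PBW Hilbert-series comparison; this is a valid classical scheme and yields as a byproduct the Witt-type count that the number of Lyndon heaps of weight $\bold k$ equals $\dim\mathcal L_{\bold k}(G)$, which the paper only obtains a posteriori. The trade-off is that your route rests on two facts that are trivial for words but genuinely delicate for traces: uniqueness of the non-increasing Lyndon factorization of a heap, and the identity $\st(L_1\circ\cdots\circ L_r)=\st(L_1)\cdots\st(L_r)$ for Lyndon heaps $L_1\ge\cdots\ge L_r$, without which the leading heap of $\Lambda(L_1)\cdots\Lambda(L_r)$ need not be $L_1\circ\cdots\circ L_r$ with coefficient $1$. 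Both are established in the partially commutative setting in \cite{kl93,la95} (compare Lemma \ref{eq_stfact} for the compatibility of standard words with factorization), and you correctly flagged them as the crux; note also that in your leading-term lemma the strict inequality for the transposed term $F_2\circ F_1>E$ uses precisely primitivity plus minimality of $E$ in its conjugacy class. Conversely, the paper's bracket-closure route avoids the factorization machinery entirely and, as Section \ref{identificationsec} shows, transfers to free partially commutative Lie superalgebras with only sign bookkeeping, whereas your dimension count would there additionally require the super PBW theorem and an enumeration of super Lyndon heaps.
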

\subsection{The identification of the spaces $\lie g_{\eta(\bold k)}$ and  $\mathcal{LS}_{\bold k}(G)$}
Let $\lie g$ be a BKM superalgebra with the associated supergraph $(G,\Psi)$. Let $\mathcal{LS}(G,\Psi)$ be the free partially commutative Lie superalgebra associated with the supergraph $(G,\Psi)$. Let $I$ be the vertex set of $G$. Fix $\bold k \in \mathbb Z_+[I]$ such that $k_i \le 1$ for $i \in I^{re}\sqcup \Psi_0$. In this subsection, we claim that there is a natural vector space isomorphism between the root space $\lie g_{\eta(\bold k)}$ of $\lie g$ and the grade space $\mathcal{LS}_{\bold k}(G,\Psi)$ of $\mathcal{LS}(G,\Psi)$. The precise statement is the following. 
 \begin{lem}\label{identification lem}
	Fix $\bold k \in \mathbb Z_+[I]$ such that $k_i \le 1$ for $i \in I^{re}\sqcup \Psi_0$.  Then 
	\begin{enumerate}
		\item The root space $\lie g_{\eta(\bold k)}$ can be identified with the grade space $\mathcal {LS}_{\bold k}(G)$ of the free partially commutative Lie superalgebra $\mathcal{LS}(G,\Psi)$. In particular, $\mult \eta(\bold k) = \dim \mathcal {LS}_{\bold k}(G,\psi)$.
		\item The root space $\lie g_{\eta(\bold k)}$ is independent of the Serre relations.
	
	\end{enumerate} 
\end{lem}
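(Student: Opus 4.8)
The plan is to realize both $\lie g_{\eta(\bold k)}$ and $\mathcal{LS}_{\bold k}(G,\Psi)$ as graded pieces of quotients of the \emph{same} free Lie superalgebra $\mathcal{FLS}(I)$ (with $I_1 = \Psi$), and then to compare the two defining ideals in grade $\bold k$. By the triangular decomposition recalled in the introduction, the positive part $\lie n^+$ is the quotient of $\mathcal{FLS}(I)$ by the ideal $\mathcal{R}$ generated by the Serre relations (7)--(9) together with the commutation relations (10), and $\lie g_{\eta(\bold k)} = (\lie n^+)_{\bold k}$ is its grade-$\bold k$ component. On the other hand, since the edge set of the quasi Dynkin diagram satisfies $\{i,j\} \notin E \Leftrightarrow a_{ij} = 0$, the relations (10) are exactly the defining relations of $\mathcal{LS}(G,\Psi) = \mathcal{FLS}(I)/\mathcal{R}_c$, where $\mathcal{R}_c \subseteq \mathcal{R}$ is the ideal generated by (10) alone. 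As $\mathcal{R}_c \subseteq \mathcal{R}$, there is a canonical $\mathbb{Z}_+[I]$-graded surjection $\mathcal{LS}(G,\Psi) \twoheadrightarrow \lie n^+$ with kernel $\mathcal{R}/\mathcal{R}_c$; restricting to grade $\bold k$ yields a surjection $\mathcal{LS}_{\bold k}(G,\Psi) \twoheadrightarrow \lie g_{\eta(\bold k)}$. It therefore suffices to prove that $(\mathcal{R}/\mathcal{R}_c)_{\bold k} = 0$ whenever $\bold k$ is free.

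The heart of the argument is a weight estimate on the generators of $\mathcal{R}/\mathcal{R}_c$, which are the images of the Serre relators. I would compute that relator (7), $(\operatorname{ad} e_i)^{1-a_{ij}} e_j$ with $i \in I^{re}$, has weight $(1-a_{ij})\alpha_i + \alpha_j$; relator (8), with $i \in \Psi^{re} \subseteq I^{re}$, has weight $(1 - \tfrac{a_{ij}}{2})\alpha_i + \alpha_j$; and relator (9), $[e_i,e_i]$ with $i \in \Psi_0$, has weight $2\alpha_i$. In each case the coefficient of $\alpha_i$ is $1 - a_{ij}$, $1 - \tfrac{a_{ij}}{2}$, or $2$ respectively, with $i \in I^{re} \sqcup \Psi_0$. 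When $a_{ij} < 0$ this coefficient is $\ge 2$, so the relator has $\alpha_i$-coefficient at least $2$ at an index $i \in I^{re} \sqcup \Psi_0$; when $a_{ij} = 0$ the relators (7)--(8) reduce to $[e_i,e_j]$, which already lies in $\mathcal{R}_c$ and hence maps to $0$ in $\mathcal{LS}(G,\Psi)$, contributing nothing to the kernel.

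Since the ideal $\mathcal{R}/\mathcal{R}_c$ is spanned by iterated brackets of these relators with arbitrary homogeneous elements, and each such bracket adds only non-negative amounts to every coordinate of the weight, every nonzero homogeneous element of $\mathcal{R}/\mathcal{R}_c$ has a weight $\mu$ with $\mu_i \ge 2$ for some $i \in I^{re} \sqcup \Psi_0$, inherited from whichever relator it originates from. But a free weight $\bold k$ satisfies $k_i \le 1$ for every $i \in I^{re} \sqcup \Psi_0$, so no homogeneous element of weight $\bold k$ can occur; hence $(\mathcal{R}/\mathcal{R}_c)_{\bold k} = 0$ and the graded surjection is an isomorphism in grade $\bold k$. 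This establishes part (1), with $\mult \eta(\bold k) = \dim \mathcal{LS}_{\bold k}(G,\Psi)$, and part (2) is then immediate: the isomorphic model $\mathcal{LS}_{\bold k}(G,\Psi)$ is a grade of the quotient of $\mathcal{FLS}(I)$ by the commutation relations \emph{alone}, so $\lie g_{\eta(\bold k)}$ does not feel the Serre relations.

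The main obstacle I anticipate is bookkeeping rather than conceptual: one must check that every Serre relator's excess coefficient ($\ge 2$) sits at a simple-root index lying in $I^{re} \sqcup \Psi_0$ — precisely the set constrained by freeness — and that the $\mathbb{Z}_2$-grading does not secretly add relations to $\mathcal{LS}(G,\Psi)$ beyond those among \emph{distinct} vertices. In particular, for the odd generators $e_i$ with $i \in \Psi$ the bracket $[e_i,e_i]$ need not vanish, so relation (9), $[e_i,e_i]=0$ for $i \in \Psi_0$, must be treated as a genuine Serre relator of weight $2\alpha_i$ and kept out of the commutation ideal $\mathcal{R}_c$; this is exactly the degenerate case where the freeness hypothesis $k_i \le 1$ on $\Psi_0$ (rather than merely on $I^{re}$) is indispensable.
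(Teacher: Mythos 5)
Your proposal is correct and follows essentially the same route as the paper's own proof: a grade-preserving surjection $\mathcal{LS}(G,\Psi)\twoheadrightarrow \lie n^+$ whose kernel is generated by the Serre relators, each of which carries an $e_i$ with $i\in I^{re}\sqcup\Psi_0$ at least twice, so the kernel vanishes in the free grade $\bold k$. Your explicit handling of the degenerate case $a_{ij}=0$ (where the relator collapses into the commutation ideal) is a small tightening of the paper's blanket claim that every relator has such a repeated $e_i$, but it does not change the argument.
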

\begin{pf}
	The positive part $\lie n_+$ of $\lie g$ can be written as $\Big(\bigoplus\limits_{\substack{\alpha \in \Delta_+ \\ \text{free}}}\lie g_{\alpha}\Big) \bigoplus \Big(\bigoplus\limits_{\substack{\alpha \in \Delta_+ \\ \text{non-free}}}\lie g_{\alpha}\Big)$. From the defining relations (relation (9)) of $\lie g$, there is a natural grade preserving surjection $\phi$ from $\mathcal{LS}(G,\Psi)$ to $\lie n_+$. Further, by the defining relations (7), (8) and (9), the kernel of this map is generated by the elements 	$$(\text{ad }e_i)^{1-a_{ij}}e_j \text{ if } i \in I^{re} \text{ and }i \ne j,$$   $$(\text{ad }e_i)^{1-\frac{a_{ij}}{2}}e_j \text{ if } i \in \Psi^{re} \text{ and } i \ne j, \text{ and}$$
	$$(\text{ad }e_i)^{1-\frac{a_{ij}}{2}}e_j  \text{ if } i \in \Psi_0 \text{ and } i = j$$
	  of $\mathcal{LS}(G,\Psi)$. We observe that in all these elements some $e_i$'s (corresponding to a real simple root or an odd simple root of norm zero) are occurring at least twice. Since $\phi$ preserves the grading, by our assumption on $\bold k$, the grade space $\mathcal{LS}_{\bold k}(G,\Psi)$ is injectively mapped onto the free root space $\lie g_{\eta(\bold k)}$ of $\lie g$. This completes the proof.
\end{pf}

\subsection{Super Lyndon heaps and the standard factorization}\label{fpcl}
Let $\lie g$ be a BKM superalgebra with the associated supergraph $(G,\Psi)$. In Theorem \ref{lafp}, we saw the Lyndon heaps of the free partially commutative Lie algebra $\mathcal L(G)$.  We construct a similar basis for the free root spaces of the BKM superalgebra $\lie g$. 
	Given Lemma \ref{identification lem}, to construct such a basis it is enough to extend Theorem \ref{lafp} to the case of free partially commutative Lie superalgebra associated with the supergraph $(G,\Psi)$. This extension is the main result of this section [c.f. Theorem \ref{lafps}]. This is done by introducing and studying the combinatorial properties of super Lyndon heaps.

\begin{defn}
	Let $I = I_0 \sqcup I_1$ be a non-empty set. Let $(G,\Psi)$ be a supergraph with vertex set $I$ and $I_1 = \Psi$. A heap $E \in \mathcal H(I,\zeta) = \mathcal H_0(I,\zeta) \oplus \mathcal H_1(I,\zeta)$ (heap monoid over the supergraph $(G,\Psi)$) is said to be a super Lyndon heap if $E$ satisfies one of the following conditions:
	\begin{itemize}
		\item $E$ is a Lyndon heap.
		\item $E= F \circ F$ where $F \in \mathcal H_{1}(I,\zeta)$ is Lyndon.
	\end{itemize}
The set of all super Lyndon heaps over the supergraph $(G,\Psi)$ is denoted by $\mathcal{SLH(I,\zeta)}$.
\end{defn}

\begin{example} A super Lyndon heap over the path graph on $4$ vertices with $I_1=\{\alpha_1, \alpha_2, \alpha_3\}$ and $I_0=\{\alpha_4\}$ is the following.

	\begin{center}

		\tikzset{every picture/.style={line width=0.75pt}} 
		
		\begin{tikzpicture}[x=0.75pt,y=0.75pt,yscale=-1,xscale=1]
		
		\draw   (131.04,432.97) .. controls (131.04,427.53) and (135.24,423.13) .. (140.42,423.13) .. controls (145.6,423.13) and (149.79,427.53) .. (149.79,432.97) .. controls (149.79,438.4) and (145.6,442.81) .. (140.42,442.81) .. controls (135.24,442.81) and (131.04,438.4) .. (131.04,432.97) -- cycle (121.67,432.97) -- (131.04,432.97) (159.17,432.97) -- (149.79,432.97) ;
		\draw   (168.23,432.97) .. controls (168.23,427.53) and (172.43,423.13) .. (177.61,423.13) .. controls (182.79,423.13) and (186.99,427.53) .. (186.99,432.97) .. controls (186.99,438.4) and (182.79,442.81) .. (177.61,442.81) .. controls (172.43,442.81) and (168.23,438.4) .. (168.23,432.97) -- cycle (158.86,432.97) -- (168.23,432.97) (196.36,432.97) -- (186.99,432.97) ;
		\draw   (80,399.22) .. controls (80,386.72) and (90.63,376.59) .. (103.74,376.59) .. controls (116.85,376.59) and (127.48,386.72) .. (127.48,399.22) .. controls (127.48,411.72) and (116.85,421.85) .. (103.74,421.85) .. controls (90.63,421.85) and (80,411.72) .. (80,399.22) -- cycle ;
		\draw   (116.16,368.7) .. controls (116.16,356.2) and (126.79,346.07) .. (139.9,346.07) .. controls (153.01,346.07) and (163.64,356.2) .. (163.64,368.7) .. controls (163.64,381.2) and (153.01,391.33) .. (139.9,391.33) .. controls (126.79,391.33) and (116.16,381.2) .. (116.16,368.7) -- cycle ;
		\draw   (154.7,280.49) .. controls (154.7,267.99) and (165.33,257.86) .. (178.44,257.86) .. controls (191.55,257.86) and (202.18,267.99) .. (202.18,280.49) .. controls (202.18,292.99) and (191.55,303.12) .. (178.44,303.12) .. controls (165.33,303.12) and (154.7,292.99) .. (154.7,280.49) -- cycle ;
		\draw   (79.55,339.19) .. controls (79.55,326.69) and (90.17,316.55) .. (103.29,316.55) .. controls (116.4,316.55) and (127.03,326.69) .. (127.03,339.19) .. controls (127.03,351.68) and (116.4,361.82) .. (103.29,361.82) .. controls (90.17,361.82) and (79.55,351.68) .. (79.55,339.19) -- cycle ;
		\draw   (155.02,341.01) .. controls (155.02,328.51) and (165.65,318.38) .. (178.76,318.38) .. controls (191.87,318.38) and (202.5,328.51) .. (202.5,341.01) .. controls (202.5,353.51) and (191.87,363.64) .. (178.76,363.64) .. controls (165.65,363.64) and (155.02,353.51) .. (155.02,341.01) -- cycle ;
		\draw   (118.55,311.63) .. controls (118.55,299.13) and (129.17,289) .. (142.29,289) .. controls (155.4,289) and (166.03,299.13) .. (166.03,311.63) .. controls (166.03,324.13) and (155.4,334.26) .. (142.29,334.26) .. controls (129.17,334.26) and (118.55,324.13) .. (118.55,311.63) -- cycle ;
		\draw   (92.5,432.4) .. controls (92.5,427.21) and (97.2,423) .. (103,423) .. controls (108.8,423) and (113.5,427.21) .. (113.5,432.4) .. controls (113.5,437.6) and (108.8,441.81) .. (103,441.81) .. controls (97.2,441.81) and (92.5,437.6) .. (92.5,432.4) -- cycle ;
		\draw   (205.5,433.4) .. controls (205.5,428.21) and (210.2,424) .. (216,424) .. controls (221.8,424) and (226.5,428.21) .. (226.5,433.4) .. controls (226.5,438.6) and (221.8,442.81) .. (216,442.81) .. controls (210.2,442.81) and (205.5,438.6) .. (205.5,433.4) -- cycle ;
		\draw    (113.5,433.4) -- (121.67,432.97) ;
		\draw    (205.5,433.4) -- (196.36,432.97) ;
		
		\draw (93,427) node [anchor=north west][inner sep=0.75pt]   [align=left] {$\displaystyle \alpha _{1}$};
		\draw (168,427) node [anchor=north west][inner sep=0.75pt]   [align=left] {$\displaystyle \alpha _{3}$};
		\draw (131,427) node [anchor=north west][inner sep=0.75pt]   [align=left] {$\displaystyle \alpha _{2}$};
		\draw (207,427) node [anchor=north west][inner sep=0.75pt]   [align=left] {$\displaystyle \alpha _{4}$};
		\draw (93,394) node [anchor=north west][inner sep=0.75pt]   [align=left] {$\displaystyle \alpha _{1}$};
		\draw (129,362) node [anchor=north west][inner sep=0.75pt]   [align=left] {$\displaystyle \alpha _{2}$};
		\draw (131,304) node [anchor=north west][inner sep=0.75pt]   [align=left] {$\displaystyle \alpha _{2}$};
		\draw (168,274) node [anchor=north west][inner sep=0.75pt]   [align=left] {$\displaystyle \alpha _{3}$};
		\draw (168,334) node [anchor=north west][inner sep=0.75pt]   [align=left] {$\displaystyle \alpha _{3}$};
		\draw (93,334) node [anchor=north west][inner sep=0.75pt]   [align=left] {$\displaystyle \alpha _{1}$};

		\end{tikzpicture}
		
	\end{center}
	
	This is an example of a super Lyndon heap $E=123123$ with $F=123$ is a Lyndon heap in $\mathcal{H}_1(I,\zeta)$.
\end{example}

Let $ E$ be a super Lyndon heap in $\mathcal H(I,\zeta)$. 
Assume that $E =  F \circ F$ with $ F$ is a Lyndon heap in $\mathcal H_1(I,\zeta)$. We define the standard factorization of $ E$ to be $\Sigma(E) = (F, F)$.  To each super Lyndon heap $ E \in \mathcal H(I,\zeta)$ we associate a Lie word $\Lambda(E)$ in $\mathcal{LS}(G,\Psi)$ as follows. If $E\in I$, then $\Lambda( E)=E$ and otherwise $\Lambda(E)=[\Lambda( F_1),\Lambda(F_2)]$, where $ E= F_1 \circ F_2$ is the standard factorization of $ E$. Given these notions, we have our following theorem which gives a basis for free partially commutative Lie superalgebra $\mathcal{LS}(G,\Psi)$.
\subsection{Super Lyndon heaps basis of free partially commutative Lie superalgebras}
The following theorem is the main result of this subsection in which we construct the Lyndon heaps basis for free partially commutative Lie superalgebras.
\begin{thm}\label{lafps}
	The set $\{\Lambda(E):  E\in \mathcal H(I,\zeta) \text{ is super Lyndon}\}$ forms a basis of $\mathcal {LS}(G,\Psi)$.
\end{thm}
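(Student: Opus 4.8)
The plan is to extend Lalonde's argument for Theorem~\ref{lafp} to the super setting, the only new feature being the odd pieces. First I would establish that the proposed set spans $\mathcal{LS}(G,\Psi)$. For this I would argue by induction on the weight order on $\mathcal H(I,\zeta)$: given any heap $E$, the bracket $\Lambda(E)$ (defined via the standard factorization) differs from the ``straightened'' expression by a $\mathbb{Z}$-linear combination of $\Lambda(E')$ with $E'$ strictly smaller in the total order, so that every element of the form $\Lambda(E)$ lies in the span of the $\Lambda(\text{super Lyndon})$. The key point is that the reduction rule $[\Lambda(E_1),\Lambda(E_2)] = \pm[\Lambda(E_2),\Lambda(E_1)]$ now carries the super sign $(-1)^{|E_1||E_2|}$, and that when $E=F\circ F$ with $F$ odd Lyndon, the Jacobi identity forces $2\,[\Lambda(F),\Lambda(F)]$ to be expressible without $[\Lambda(F),\Lambda(F)]$ on the right, so $\Lambda(F\circ F)$ genuinely survives as a basis element rather than vanishing. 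This is exactly the super-letter phenomenon: for an odd element $x$, $[x,x]$ is generally nonzero, and $F\circ F$ records it.

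Second I would prove linear independence, equivalently that the spanning set has the right cardinality in each grade $\mathbf k$. Here I would invoke the first main result, Lemma~\ref{identification lem}, which gives $\dim\mathcal{LS}_{\bold k}(G,\Psi)=\mult\eta(\bold k)$ for free $\bold k$, together with a dimension count: by the super-analogue of the Poincar\'e--Birkhoff--Witt / necklace counting that underlies Lalonde's theorem, the number of super Lyndon heaps of weight $\bold k$ matches $\dim\mathcal{LS}_{\bold k}(G,\Psi)$ in each graded piece. Concretely I would show the generating-function identity for $\prod_{E\in\mathcal{SLH}(I,\zeta)}$ (even factors contributing $(1-x^{\wt E})^{-1}$, odd-Lyndon factors contributing $(1+x^{\wt E})$ via $[x,x]$) reproduces the Hilbert series of $\mathcal{LS}(G,\Psi)$; since spanning plus equality of graded dimensions yields a basis, this closes the argument grade by grade.

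The main obstacle will be the super-sign bookkeeping in the straightening (spanning) step, precisely at the diagonal odd pieces. In the non-super case Lalonde shows $\Lambda(E)$ for $E$ Lyndon span by a Lazard-elimination-style rewriting; transplanting this requires that every super-commutator identity used (anticommutativity and super-Jacobi) respect the $\mathbb{Z}_2$-grading consistently, and that the ``squared'' generators $[\Lambda(F),\Lambda(F)]$ for odd Lyndon $F$ are neither forced to zero nor redundant. I would isolate this as the one genuinely new lemma: that $F\circ F$ (with $F$ odd Lyndon) is primitive and minimal enough in its conjugacy class to index an independent element, while $F\circ F$ with $F$ \emph{even} Lyndon is periodic and hence correctly excluded. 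Once that lemma is in place, the remainder follows Lalonde's proof of Theorem~\ref{lafp} essentially verbatim, with signs inserted, and the dimension match from Lemma~\ref{identification lem} upgrades spanning to a basis.
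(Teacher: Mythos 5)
Your spanning step is essentially the paper's: the paper proves (Proposition \ref{shush7}) that for super Lyndon heaps $L<M$ the bracket $[\Lambda(L),\Lambda(M)]$ lies in the span of $\Lambda(N)$ with $N$ super Lyndon, $N<M$, of the same degree, treating the odd-square case via $[[\Lambda(L_1),\Lambda(L_1)],\Lambda(M)]=2[\Lambda(L_1),[\Lambda(L_1),\Lambda(M)]]$ exactly as you anticipate; closure under brackets of a set containing $I$ then gives spanning. Your isolation of the genuinely new phenomenon (odd Lyndon squares $F\circ F$ survive, even squares are periodic and excluded) is also correct.

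The gap is in your linear-independence step. Invoking Lemma \ref{identification lem} is circular: that lemma only \emph{identifies} $\mathcal{LS}_{\bold k}(G,\Psi)$ with the root space $\lie g_{\eta(\bold k)}$; it computes neither dimension. In the paper the multiplicity of a free root is \emph{derived from} Theorem \ref{lafps} (see the chain of equalities proving Lemma \ref{helplem2}, and Proposition \ref{ml}(2)), so there is no independently known value of $\mult\eta(\bold k)$ against which to match a count of super Lyndon heaps. Your fallback — a generating-function identity showing that the product over super Lyndon heaps reproduces the Hilbert series of $\mathcal{LS}(G,\Psi)$ — is not circular in principle, but it silently requires two substantial inputs you do not supply: a super Poincar\'e--Birkhoff--Witt theorem applied to the enveloping algebra $\lie U=\mathbb{C}[\mathcal H(I,\zeta)]$ (the paper's Proposition \ref{ubasis}), and Lalonde's unique factorization of every heap as a non-increasing product of Lyndon heaps, which is the heap analogue of Chen--Fox--Lyndon and is itself a theorem on the order of Theorem \ref{lafp}. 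The paper avoids all of this with a much more elementary device: a triangularity statement (Proposition \ref{shush5}) asserting that in $\lie U$ one has $\Lambda(L)=\alpha_L L+\sum_{E>L}\alpha_E E$ with $\alpha_L=1$ for Lyndon $L$ and $\alpha_L=2$ for $L=L_1\circ L_1$ with $L_1$ odd Lyndon; since the heaps themselves are a basis of $\lie U$ by construction, distinct super Lyndon heaps yield linearly independent elements grade by grade, with no dimension count and no appeal to $\lie g$ at all. To repair your proposal, replace the dimension-matching argument by this triangularity lemma (or else prove the unique-factorization and super-PBW inputs you are assuming); as written, the independence half of your proof does not go through.
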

The rest of the section is dedicated to the proof of the above theorem. The proof of the following lemma is immediate.

\begin{lem}Let $E \in \mathcal{SLH}_{\bold k}(I,\zeta)$. Then $\Lambda(E)= \sum\limits_{F \in \mathcal{SLH}_{\bold k}(I,\zeta)} \alpha_F F $ where $\alpha_F \in \mathbb{Z}$.  Since there are finite number of heaps of degree $\bold k$, the sum is a finite sum.
\end{lem}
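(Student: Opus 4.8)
The plan is to prove this by a short induction on the number of pieces $|E|$, carried out inside the associative monoid superalgebra $\mathbb C[\mathcal H(I,\zeta)]\cong\mathbb C[M(I,G,\Psi)]$ into which $\mathcal{LS}(G,\Psi)$ embeds. In this ambient algebra every Lie super-bracket is realized as a super-commutator $[x,y]=xy-(-1)^{\varepsilon(x)\varepsilon(y)}yx$, where $\varepsilon(\cdot)\in\{0,1\}$ records the $\mathbb Z_2$-degree coming from $\mathcal H(I,\zeta)=\mathcal H_0(I,\zeta)\oplus\mathcal H_1(I,\zeta)$. The two structural facts I would lean on are: (i) the superposition product $U\circ V$ of two heaps is again a single heap, of weight the sum of the two weights, so products of monomials remain monomials; and (ii) the Lie word $\Lambda(E)$ is defined purely recursively through the standard factorization $\Sigma(E)$, so its bracket structure is completely controlled. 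Together these force $\Lambda(E)$ to be homogeneous of weight $\bold k$ and to expand with only the scalars $\pm1$ entering, hence with integer coefficients; finiteness is automatic because $\mathcal H_{\bold k}(I,\zeta)$ is a finite set.

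For the induction itself I would first dispose of the base case $E\in I$: then $\Lambda(E)=E$, a super Lyndon heap of weight $\bold k$ occurring with coefficient $1$. For the inductive step I take the standard factorization $\Sigma(E)=(F_1,F_2)$ (Lalonde's factorization of Theorem \ref{lafp} when $E$ is Lyndon, and $\Sigma(E)=(F,F)$ with $F\in\mathcal H_1(I,\zeta)$ Lyndon in the exceptional super case), so $\Lambda(E)=[\Lambda(F_1),\Lambda(F_2)]$. By induction each $\Lambda(F_j)$ is a finite $\mathbb Z$-combination of heaps, of weights $\bold k_1,\bold k_2$ with $\bold k_1+\bold k_2=\bold k$; expanding the super-commutator and applying (i) exhibits $\Lambda(E)$ as a finite $\mathbb Z$-combination of heaps of weight $\bold k$. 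The crucial refinement I would extract from the same induction, using the order-compatibility of the isomorphism $\psi\colon M(I,G,\Psi)\to\mathcal H(I,\zeta)$ and the minimality built into $\Sigma$, is that with respect to the total order \eqref{totor} the super Lyndon heap $E$ is itself the \emph{minimal} heap occurring in $\Lambda(E)$, and occurs with coefficient $\pm1$; every other heap $F$ that appears satisfies $\st(F)>\st(E)$.

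The step I expect to be the genuine obstacle is matching the index set in the statement exactly, namely organizing the expansion so that it runs over the super Lyndon heaps $\mathcal{SLH}_{\bold k}(I,\zeta)$. My plan here is to combine the integer expansion above with a triangular reduction: starting from the leading super Lyndon term $E$ and sweeping through the higher heaps in increasing $\st$-order, I would use the conjugacy- and primitivity-characterization of (super) Lyndon heaps to control precisely which heaps can contribute, keeping $E$ as the surviving leading term and the coefficients integral throughout. Establishing this reduction rigorously — tracking that no term outside $\mathcal{SLH}_{\bold k}(I,\zeta)$ persists and that the triangularity is genuinely unit-leading — is the crux of the argument; the integrality and finiteness of the expansion, by contrast, are immediate from the induction. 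This unitriangular structure on $\mathcal{SLH}_{\bold k}(I,\zeta)$ is exactly the input that will subsequently force the linear independence of $\{\Lambda(E):E\in\mathcal{SLH}(I,\zeta)\}$ in the proof of Theorem \ref{lafps}.
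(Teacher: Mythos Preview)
Your induction on $|E|$ in the first paragraph is exactly what the paper has in mind: it declares the proof ``immediate'' and gives no further argument. Expanding the recursive definition of $\Lambda$ inside $\mathbb C[\mathcal H(I,\zeta)]$ as iterated super-commutators shows at once that $\Lambda(E)$ is a $\mathbb Z$-linear combination of heaps of weight $\bold k$, and that is all the lemma is claiming.

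The difficulty you identify in your last paragraph is an artifact of a misprint in the index set. The sum should run over $F\in\mathcal H_{\bold k}(I,\zeta)$, not over $\mathcal{SLH}_{\bold k}(I,\zeta)$. Two pieces of internal evidence confirm this: the lemma itself justifies finiteness by ``there are finite number of \emph{heaps} of degree $\bold k$'', and the very next result, Proposition~\ref{shush5}, writes $\Lambda(L)=\sum_E\alpha_E E$ with $E$ ranging over arbitrary heaps and analyzes which heaps occur. Indeed the literal statement with $\mathcal{SLH}_{\bold k}$ is false already in the simplest case: for $I=\{1,2\}$ with $1,2$ adjacent and even, $\Lambda(12)=1\circ 2-2\circ 1$, and $2\circ 1$ is not Lyndon. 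So no ``triangular reduction'' can collapse the expansion onto super Lyndon heaps alone.

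What you are reaching for in that last paragraph is precisely the content of Proposition~\ref{shush5}, not of this lemma: $E$ is the $\le$-minimal heap appearing in $\Lambda(E)$, with leading coefficient $1$ (Lyndon case) or $2$ (the case $E=F\circ F$ with $F\in\mathcal H_1$ Lyndon), and every other heap $F$ with $\alpha_F\neq 0$ satisfies $F>E$ --- but those higher $F$ are general heaps, not super Lyndon heaps. Note also that your ``$\pm 1$'' should be $1$ or $2$. The linear independence in Theorem~\ref{lafps} is then deduced from this triangularity against the heap basis of the enveloping algebra, exactly as you anticipate.
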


\begin{prop}\label{ubasis}
	The set $\mathcal H(I,\zeta)$ indexes a basis for the universal enveloping algebra of the free partially commutative Lie superalgebra $\mathcal{LS}(G,\Psi)$.
\end{prop}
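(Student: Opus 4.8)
The plan is to identify $U(\mathcal{LS}(G,\Psi))$ with a partially commutative associative superalgebra and then read off the heap basis, never invoking Theorem \ref{lafps} (which this proposition is meant to feed into). First I would recall the standard fact that the universal enveloping algebra of the free Lie superalgebra is the free associative superalgebra, i.e. $U(\mathcal{FLS}(I))\cong T(V)$: by construction $\mathcal{FLS}(I)$ is the Lie subsuperalgebra of $T(V)$ generated by $I$, and $T(V)$ satisfies the universal property of the enveloping algebra for that generation. Next, since the enveloping-algebra functor sends a quotient of Lie superalgebras by an ideal to the quotient of the associative superalgebra by the two-sided ideal generated by that ideal, the presentation $\mathcal{LS}(G,\Psi)=\mathcal{FLS}(I)/J$ yields $U(\mathcal{LS}(G,\Psi))\cong T(V)/K$, where $K$ is the two-sided ideal of $T(V)$ generated by the super-commutators $[e_i,e_j]=e_ie_j-(-1)^{\bar{\imath}\,\bar{\jmath}}e_je_i$ for distinct non-adjacent $i,j$ (here $\bar{\imath}=0$ for $i\in I_0$ and $\bar{\imath}=1$ for $i\in I_1=\Psi$). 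Thus $U(\mathcal{LS}(G,\Psi))$ is exactly the free partially commutative associative superalgebra attached to $(G,\Psi)$.

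It then remains to prove that the images of the standard words $\{\st(E):E\in\mathcal{H}(I,\zeta)\}$ form a basis of $T(V)/K$. Spanning is immediate: the monomials $I^{*}$ span $T(V)$, and modulo $K$ each relation transposes two non-adjacent letters at the cost of a Koszul sign, so iterating sends any word $\mathbf{w}$ to $\pm\st([\mathbf{w}])$, where $[\mathbf{w}]\in M(I,G,\Psi)$ is its commutation class and $\st([\mathbf{w}])$ its lexicographically maximal representative (Section \ref{fpcsm}); since $\psi\colon M(I,G,\Psi)\to\mathcal{H}(I,\zeta)$ is a weight- and order-preserving monoid isomorphism, these standard words are indexed precisely by heaps. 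For linear independence I would orient the commutation relations into a rewriting system that pushes each word toward its standard representative by moving larger letters left. This system is terminating (each admissible swap strictly increases the lexicographic value while preserving the multidegree, of which there are only finitely many words) and, after resolving the length-three overlaps $e_ke_je_i$, confluent, so by the Diamond Lemma the irreducible words---exactly the standard words $\st(E)$---form a $\mathbb{C}$-basis of $T(V)/K$.

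The main obstacle is the consistency of signs in the super setting: I must verify that the Koszul sign accumulated while reducing $\mathbf{w}$ to its normal form is independent of the sequence of transpositions, which is precisely what resolves the Diamond Lemma ambiguities. This holds because the sign depends only on the parities of the odd letters being interchanged, and the parity of a permutation of the odd positions is a homomorphism $S_n\to\{\pm1\}$; hence any two reduction paths of a fixed word realize the same net reordering and therefore the same sign. Equivalently, and perhaps more transparently, one can make the sign explicit by letting $T(V)$ act on $\bigoplus_{E\in\mathcal{H}(I,\zeta)}\mathbb{C}E$ with $e_i$ acting by super-stacking a piece at position $i$ (a Fock-type representation), check that this action annihilates $K$, and note that $\st(E)$ sends the empty heap to $\pm E$, which forces the $\st(E)$ to be independent. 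Either route establishes the super-analogue of the Cartier--Foata description of the partially commutative associative algebra, and the indexing set of the resulting basis is $\mathcal{H}(I,\zeta)$.
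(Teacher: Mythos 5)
Your proposal is correct, and it is in fact considerably more complete than the argument the paper gives. The paper's proof of Proposition \ref{ubasis} is three sentences: it takes $\lie U$ to be the $\mathbb C$-span of $\mathcal H(I,\zeta)$ with the multiplication ``induced from'' superposition, asserts that this is the free partially commutative associative superalgebra, and concludes it is the enveloping algebra because it is ``the smallest associative superalgebra containing $\mathcal{LS}(G,\Psi)$.'' You reach the same object but actually justify the two steps the paper elides. First, your use of the functorial identity $U(\mathcal{FLS}(I)/J)\cong T(V)/\langle J\rangle$ replaces the paper's ``smallest algebra containing'' criterion, which does not by itself characterize the enveloping algebra; your route via the universal property is the correct formal argument. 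Second, and more substantively, you address the point the paper's proof is silent on: in the super setting the quotient $T(V)/K$ is not literally the monoid algebra of $M(I,G,\Psi)$, since for \emph{odd} non-adjacent $i\ne j$ the relation reads $e_ie_j=-e_je_i$, so one obtains a sign-twisted monoid algebra and must check that each commutation class spans a line rather than collapsing to zero. Your sign-coherence argument is sound: rewriting rules only transpose \emph{distinct} letters, so occurrences of any fixed letter retain their relative order along every reduction path, the occurrence bijection between a word and its normal form is therefore path-independent, and the accumulated Koszul sign --- the parity of the induced permutation on odd occurrences --- is well defined; the Diamond Lemma (or equivalently your Fock-type action on $\bigoplus_{E\in\mathcal H(I,\zeta)}\mathbb C E$) then yields the heap-indexed basis. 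One small point you implicitly got right and should keep explicit: the generating set of $J$ must be read with $i\ne j$, since for odd $i$ the element $[e_i,e_i]=2e_i^2$ is \emph{not} a relation (otherwise $e_i^2=0$, contradicting the nonvanishing of $\Lambda(F\circ F)$ for odd Lyndon heaps $F$ used in Theorem \ref{lafps}). In short, your proof buys a rigorous super-analogue of the Cartier--Foata basis theorem where the paper offers only an assertion, at the cost of the rewriting-system bookkeeping; both arguments are independent of Theorem \ref{lafps}, as required for the logical order of the section.
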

\begin{pf}
	Let $\lie U$ be the $\mathbb C$-span of the heaps monoid $\mathcal H(I,\zeta)$ associated with the supergraph $(G,\Psi)$. Then $\lie U$ has an algebra structure induced from the multiplication in $\mathcal H(I,\zeta)$. This is the free partially commutative superalgebra associated with the supergraph $(G,\Psi)$. This is the smallest associative superalgebra containing $\mathcal {LS}(G)$. Therefore $\lie U$ is the universal enveloping algebra of the Lie superalgebra $\mathcal{LS}(G,\Psi)$. 
\end{pf}

\begin{prop}\label{shush5}
	Let $L$ be a super Lyndon heap of weight $\bold k$ over the supergraph $(G,\Psi)$. Put $\Lambda(L)= \sum\limits_{E \in \mathcal{SLH}_{\bold k}(I,\zeta)} \alpha_E E $. Then
	\begin{itemize}
		\item[(i)] $\alpha_L =1$ if $L$ is a Lyndon heap
		\item[(ii)] $\alpha_L =2$ if $L= L_1 \circ L_1$, $L_1$ is Lyndon heap in $\mathcal{H}_1(I,\zeta)$
		\item[(iii)] If $\alpha_E \neq 0$ then $E \ge L$. 
	\end{itemize}
\end{prop}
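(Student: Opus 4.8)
The plan is to work inside the universal enveloping algebra $\lie U$ of $\mathcal{LS}(G,\Psi)$, which by Proposition \ref{ubasis} has the heap monoid $\mathcal H(I,\zeta)$ as a $\mathbb C$-basis, and to expand $\Lambda(L)$ in this basis. Since $\Lambda$ preserves the $\mathbb Z_+[I]$-grading, every heap occurring in $\Lambda(L)$ has weight $\bold k = \wt(L)$, so the sum is finite. I would prove all three assertions simultaneously by induction on $\htt(\bold k) = |L|$, the number of pieces of $L$, treating the two defining cases of a super Lyndon heap in parallel.

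The combinatorial engine of the proof is a strict monotonicity (triangularity) property of superposition with respect to the total order on $\mathcal H(I,\zeta)$ coming from standard words [c.f. Equation \eqref{totor}]: if $E_1,F_1$ are heaps of a common weight with $E_1 \ge F_1$, and $E_2,F_2$ are heaps of a common weight with $E_2 \ge F_2$, then $E_1 \circ E_2 \ge F_1 \circ F_2$, with equality if and only if $E_1 = F_1$ and $E_2 = F_2$. This is the heap-theoretic lemma underlying Lalonde's proof of Theorem \ref{lafp}, and I would cite it from \cite{la93, la95}; it is the point at which the commutations of $(G,\Psi)$ enter, and verifying the \emph{strict} case is the main obstacle of the argument (everything else is sign bookkeeping). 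I also record the elementary fact that, since $L$ is Lyndon with standard factorization $\Sigma(L) = (F_1,F_2)$, one has $F_1 \circ F_2 = L$ and $L < F_2 \circ F_1$, the latter because $F_2 \circ F_1$ is a transpose of $L$ and $L$, being primitive and minimal in its conjugacy class, lies strictly below every other heap conjugate to it.

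For the base case $|L| = 1$ we have $L \in I$ and $\Lambda(L) = L$, giving $\alpha_L = 1$ with no other terms (case (i)). For the inductive step in the Lyndon case, write $\Sigma(L) = (F_1,F_2)$ with $F_1,F_2$ Lyndon of smaller length, so that in $\lie U$
\[ \Lambda(L) = [\Lambda(F_1),\Lambda(F_2)] = \Lambda(F_1)\Lambda(F_2) - (-1)^{\bar F_1\,\bar F_2}\,\Lambda(F_2)\Lambda(F_1), \]
where $\bar F$ denotes the $\mathbb Z_2$-degree of a heap. By the induction hypothesis (assertions (i) and (iii) applied to the Lyndon heaps $F_1,F_2$) each $\Lambda(F_j)$ is homogeneous of weight $\wt(F_j)$, equals $F_j$ plus a combination of heaps strictly greater than $F_j$, and has leading coefficient $1$. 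Applying the monotonicity lemma termwise, every heap in $\Lambda(F_1)\Lambda(F_2)$ is $\ge F_1 \circ F_2 = L$, and by the strict case the coefficient of $L$ there is exactly $1\cdot 1 = 1$; likewise every heap in $\Lambda(F_2)\Lambda(F_1)$ is $\ge F_2 \circ F_1 > L$, so the second summand contributes only heaps strictly above $L$. Hence $\alpha_L = 1$ and every occurring heap is $\ge L$, proving (i) and (iii) in this case.

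Finally, for the super case $L = L_1 \circ L_1$ with $L_1$ an odd Lyndon heap, the standard factorization is $\Sigma(L) = (L_1,L_1)$ and, because $\bar L_1 = 1$,
\[ \Lambda(L) = [\Lambda(L_1),\Lambda(L_1)] = \Lambda(L_1)^2 - (-1)^{1}\,\Lambda(L_1)^2 = 2\,\Lambda(L_1)^2. \]
By the already-established case (i) for $L_1$, we have $\Lambda(L_1) = L_1 + \sum_{E > L_1} c_E E$, so $\Lambda(L_1)^2$ has leading heap $L_1 \circ L_1 = L$ with coefficient $1$ (again by the strict monotonicity lemma) and all remaining heaps strictly above $L$. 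Multiplying by $2$ gives $\alpha_L = 2$ with every occurring heap $\ge L$, which is (ii) together with (iii). This closes the induction and establishes the proposition; the only genuinely nontrivial input is the strict triangularity of superposition, the super-structure contributing merely the sign that doubles the leading coefficient in the odd-square case.
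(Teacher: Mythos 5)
Your proof is correct and, on the genuinely new content of this proposition --- parts (ii) and (iii) for $L = L_1 \circ L_1$ --- it coincides with the paper's argument: both expand $[\Lambda(L_1),\Lambda(L_1)]$, use that $L_1$ is odd to obtain the coefficient $2$ on the leading heap $L_1 \circ L_1$, and settle the order claim by compatibility of superposition with the total order on $\mathcal H(I,\zeta)$. Where you diverge is the Lyndon case (i)+(iii): the paper imports it wholesale from \cite[Theorem 4.2]{la93}, whereas you rerun Lalonde's induction on the standard factorization, with all the order-theoretic weight carried by the blockwise monotonicity lemma ($E_1 \ge F_1$, $E_2 \ge F_2$ with matching weights implies $E_1\circ E_2 \ge F_1 \circ F_2$, equality iff the factors agree) that you cite rather than prove. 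This is a fair trade: your route makes the triangularity self-contained and the strictness mechanism explicit (your primitivity argument for $F_2 \circ F_1 > L$ is exactly right), while the paper reaches the strictness it needs in the super case more concretely via $\st(E \circ E') \ge \st(E)\cdot\st(E') > \st(L_1)\cdot \st(L_1) = \st(L_1 \circ L_1)$, the last equality being a special property of Lyndon heaps. Two small debts you should acknowledge. First, your inductive step applies the hypothesis to the left factor $F_1$ of $\Sigma(L)$, but the paper's definition of standard factorization only requires the \emph{right} factor to be Lyndon; that $F_1$ is also Lyndon is itself a theorem of Lalonde \cite{la95}, needed even to make the recursion $\Lambda(L)=[\Lambda(F_1),\Lambda(F_2)]$ well-defined. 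Second, the equality clause of your monotonicity lemma --- which is what pins the coefficient of $L$ at exactly $1$ (hence $2$ after doubling) --- deserves a word: it follows from Levi's lemma for trace monoids together with the reflexivity of the dependence relation $\zeta$, which forces equal-weight elementwise-commuting factors to be empty. Neither gap exceeds the level of citation the paper itself permits via Proposition \ref{ubasis} and \cite{la93,la95}.
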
 
\begin{proof}
	If $E$ is a Lyndon heap, then the proofs of (i) and (iii) are given in \cite[Theorem 4.2]{la93}. So we prove (ii) and (iii) when $L$ is a super Lyndon heap. Let $L= L_1 \circ L_1$ where $L_1$ is Lyndon heap in $\mathcal{H}_1(I,\zeta)$. Now,
	\begin{align*}
	\Lambda(L)&= [\Lambda(L_1), \Lambda(L_1)]\\
	&=[\sum\limits_{E \ge L_1}\alpha_E E, \sum\limits_{E' \ge L_1} \alpha_{E'} E']  \text{ (Using part(i) and (iii)) for Lyndon heaps)}\\
	& = [L_1,L_1]+ \sum\limits_{\substack{{E >L_1}\\{E'>L_1}}} \alpha_E \alpha_{E'}[E, E']+\sum\limits_{E >L_1} \alpha_E[E,L_1]+ \sum\limits_{E'>L_1}\alpha_{E'}[L_1,E']\\
	&=2L+ \sum_{K>L} \alpha_K K.
	\end{align*}
	This proves (ii). Now, $st( E \circ E^{'}) \ge st(E) \cdot st(E^{'}) >st(L_1) \cdot st(L_1) =: st(L_1 \circ L_1) \Rightarrow E\circ E^{'} >L $. Similarly, we have $E^{'} \circ E > L.$ Also, $E \circ L_1 >L, L_1 \circ E>L.$ Hence (iii) follows. 
\end{proof}

\begin{cor} The set $\mathcal{B}= \{\Lambda(L): L$ is a super Lyndon heap\} is linearly independent in $\mathcal{LS}(G,\Psi)$.
\end{cor}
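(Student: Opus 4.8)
The plan is to run a standard unitriangularity argument with respect to the total order on $\mathcal H(I,\zeta)$, using Proposition \ref{ubasis} to supply a basis in which coefficients can be read off and Proposition \ref{shush5} to supply the triangular shape of the elements $\Lambda(L)$.

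First I would exploit the $\mathbb Z_+[I]$-grading of $\mathcal{LS}(G,\Psi)$: each $\Lambda(L)$ is homogeneous of weight equal to the weight of $L$, so a relation among the $\Lambda(L)$ splits into one relation per weight, and it suffices to argue within a fixed weight $\bold k$. Fix $\bold k$ and enumerate the finitely many super Lyndon heaps of weight $\bold k$ as $L_1<L_2<\cdots<L_m$ in the total order on heaps. By Proposition \ref{ubasis} the heap monoid $\mathcal H(I,\zeta)$ is a basis of the universal enveloping algebra $\lie U\supseteq\mathcal{LS}(G,\Psi)$, so every $\Lambda(L_j)$ has a well-defined expansion $\Lambda(L_j)=\alpha_{L_j}L_j+\sum_{E>L_j}\beta_{j,E}\,E$, where Proposition \ref{shush5}(i)--(iii) guarantees $\alpha_{L_j}\in\{1,2\}$ and that every heap $E$ occurring with nonzero coefficient satisfies $E\ge L_j$. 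Thus $L_j$ is the minimal heap in the support of $\Lambda(L_j)$, and it occurs with an invertible coefficient.

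Next I would suppose a nontrivial relation $\sum_{j=1}^m c_j\Lambda(L_j)=0$ and let $j_0$ be the least index with $c_{j_0}\ne 0$. Reading off the coefficient of the heap $L_{j_0}$ on both sides: the terms with $j<j_0$ vanish because $c_j=0$; each term with $j>j_0$ contributes nothing, since its support lies in $\{E:E\ge L_j\}$ and $L_j>L_{j_0}$ forces every such $E$ to strictly exceed $L_{j_0}$; and the $j=j_0$ term contributes exactly $c_{j_0}\alpha_{L_{j_0}}$. Hence $c_{j_0}\alpha_{L_{j_0}}=0$, and since $\alpha_{L_{j_0}}\in\{1,2\}$ is nonzero in $\mathbb C$ we conclude $c_{j_0}=0$, a contradiction. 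Therefore all $c_j$ vanish, and since weights were arbitrary, $\mathcal B$ is linearly independent in $\mathcal{LS}(G,\Psi)$.

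I do not expect a serious obstacle: this is the heap-theoretic analogue of the classical proof that distinct Lyndon words yield linearly independent Lie elements, and all the real content has already been isolated in Proposition \ref{shush5}. The single point that genuinely uses the super setting, and the only place where the characteristic of the base field enters, is case (ii): when $L=L_1\circ L_1$ the leading coefficient is $2$ rather than $1$, so one must invoke $2\neq 0$ (automatic over $\mathbb C$) to pass from $c_{j_0}\alpha_{L_{j_0}}=0$ to $c_{j_0}=0$.
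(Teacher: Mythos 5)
Your proof is correct and follows essentially the same route as the paper: both expand each $\Lambda(L)$ in the heap basis of the enveloping algebra (Proposition \ref{ubasis}), invoke the triangularity and leading coefficients $\alpha_L\in\{1,2\}$ from Proposition \ref{shush5}, and reduce to a fixed weight $\bold k$ using the $\mathbb Z_+[I]$-grading. Your minimal-heap extraction is in fact the cleaner way to phrase the paper's final step, where the conclusion ``heaps form a basis, hence all $\beta_L=0$'' implicitly relies on exactly the observation you make explicit, namely that $L_{j_0}$ cannot occur in the support of $\Lambda(L_j)$ for $L_j>L_{j_0}$.
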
 
\begin{proof}
	Assume that $$\sum\limits_{L \in \mathcal{B}} \beta_L \Lambda(L) =0, \,\,\,\,\, \beta_L \in \mathbb{C} $$ where all but finitely many $\beta_L$ are zero. Then by the above proposition, we have 
	$$\sum\limits_{L \in \mathcal{B}} \beta_L \left( \sum\limits_{\substack{E \ge L \\ \wt(E) = \wt(L)}} \alpha_E E\right) =0$$
	$$\Rightarrow \sum\limits_{L \in \mathcal{B}} \beta_L \left( \alpha_L L+ \sum\limits_{\substack{E > L \\ \wt(E) = \wt(L)}} \alpha_E E\right) =0 \text{ where } 
	\alpha_L = \begin{cases}   1 & \text{ if }  L \in \mathcal{LH}(I,\zeta) \\  2 & \text{ if }  L=E\circ E, E \in \mathcal H_1(I,\zeta) 
	\end{cases}$$
	$$\Rightarrow \sum\limits_{L \in\mathcal{LH}(I,\zeta)} \beta_L L+ 2\sum\limits_{\substack{L=E\circ E \\ E \in \mathcal{LH}_1(I,\zeta)}} \beta_L L+ \sum\limits_{L \in \mathcal B} \sum\limits_{\substack{E > L \\ \wt(E) = \wt(L)}}\beta_L  \alpha_E E =0 $$ Taking modulo in the grade space $\mathcal{LS}_{\bold k}(G,\Psi)$ we get $$\begin{cases}
	\sum\limits_{\substack{{L \in \mathcal{LH}_{\bold k}(I,\zeta)}}}  \beta_L L + \sum\limits_{\substack{{E>L}\\ \wt(E) = \wt(L) 
			\\ {E \in \mathcal{SLH}_{\bold k}(I,\zeta)}}} \beta_L \alpha_E E=0 \text{ if $k_i$ is odd for some }  i \in \supp(\bold k) \\
	\sum\limits_{\substack{{L \in \mathcal{LH}_{\bold k}(I,\zeta)}}}  \beta_L L + 2 \sum\limits_{\substack{L \in \mathcal{LH}_{\bold k}(I,\zeta) \\ L = E \circ E \\ E \in \mathcal{LH}_1(I,\zeta)}}  \beta_L L +  \sum\limits_{\substack{{\substack{E > L \\ \wt(E) = \wt(L)}}\\{L \in \mathcal{SLH}_{\bold k}(I,\zeta)}}} \beta_L \alpha_E E=0 \text{  otherwise}.
	\end{cases}$$
	which are  finite linear combinations of heaps of weight $\bold k$ in the universal enveloping algebra $\lie U$ of $\mathcal{LS}(G,\Psi)$. Since heaps form a basis of $\lie U = \mathbb{C}(\mathcal{H}(I,\zeta))$ we get all the $ \beta_L =0$ in the above equations. Since $\bold k$ is arbitrary we get $ \beta_L =0$ for all $L \in \mathcal{B} $. This completes the proof.
\end{proof}
\begin{prop}\label{shush7}
	Let $ L$ and $M$ be super Lyndon heaps such that $L <M$. Then we can write
	$[\Lambda(L), \Lambda(M)]= \sum\limits_{\substack{{N \in \mathcal{SLH}(I,\zeta)}\\N<M \\ {deg(N)= deg(L\circ M)}}} \alpha_N \Lambda(N)$.
\end{prop}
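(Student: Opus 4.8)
The plan is to mirror Lalonde's inductive straightening argument for Lyndon heaps and to add the bookkeeping forced by the $\mathbb{Z}_2$-grading. I would argue by a double induction: a primary induction on the number of pieces $|L\circ M|$ and, for fixed total weight, a secondary descending induction on the larger heap $M$ in the total order on $\mathcal H(I,\zeta)$. The base case is $L,M\in I$: then $[\Lambda(L),\Lambda(M)]=[L,M]$, which is $0$ when $(L,M)\notin E(G)$ and otherwise equals $\Lambda(L\circ M)$ for the two-piece heap $L\circ M$; since $L<M$ one checks $\st(L\circ M)<\st(M)$, so the required bound $N<M$ holds. More generally, when $(L,M)$ is already a standard factorization, i.e. $L\circ M$ is super Lyndon with $\Sigma(L\circ M)=(L,M)$, the definition of $\Lambda$ gives $[\Lambda(L),\Lambda(M)]=\Lambda(L\circ M)$, and the heap inequalities of \cite{la95} yield $L\circ M<M$ directly.

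For the inductive step I would take the standard factorization $\Sigma(M)=(M_1,M_2)$, so that $\Lambda(M)=[\Lambda(M_1),\Lambda(M_2)]$ with $M_1,M_2$ super Lyndon, $|M_1|,|M_2|<|M|$ and $M_1<M<M_2$, and apply the graded Jacobi identity
$$[\Lambda(L),\Lambda(M)]=[[\Lambda(L),\Lambda(M_1)],\Lambda(M_2)]+(-1)^{|\Lambda(L)|\,|\Lambda(M_1)|}[\Lambda(M_1),[\Lambda(L),\Lambda(M_2)]].$$
The inner brackets $[\Lambda(L),\Lambda(M_1)]$ and $[\Lambda(L),\Lambda(M_2)]$ have strictly fewer pieces than $L\circ M$, so after reordering their arguments into increasing order (absorbing a graded sign when $L$ exceeds $M_1$ or $M_2$) the primary inductive hypothesis expands each into super Lyndon terms $\Lambda(N')$; since $L,M_1<M$, these satisfy $N'<M$. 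Substituting back and expanding by (super)bilinearity, the whole expression becomes a combination of brackets $[\Lambda(N'),\Lambda(M_2)]$ and $[\Lambda(M_1),\Lambda(N'')]$, whose larger argument is now bounded by $M_2$ and is shorter than $M$, so the secondary hypothesis applies.

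The step I expect to be the main obstacle is the order bound. The secondary hypothesis a priori only produces terms $\Lambda(N)$ with $N<M_2$, whereas the assertion demands $N<M$ and recall $M<M_2$; the contributions with $M\le N<M_2$ must be shown to cancel. This is exactly the point handled by Lalonde's ordering lemmas, and I would resolve it by passing to the universal enveloping algebra of $\mathcal{LS}(G,\Psi)$, realized as the heap monoid algebra $\mathbb C\,\mathcal H(I,\zeta)$ by Proposition \ref{ubasis}. There the leading (minimal) heap in the support of $[\Lambda(L),\Lambda(M)]$ is the smaller of the two conjugates $L\circ M$ and $M\circ L$, which one shows is $L\circ M<M$. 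Using the triangularity $\Lambda(N)=\alpha_N N+\sum_{E>N}\alpha_E E$ of Proposition \ref{shush5}(iii), I would straighten $[\Lambda(L),\Lambda(M)]$ by repeatedly subtracting $\alpha_N\Lambda(N)$ starting from the minimal heap in its support: each subtraction only introduces strictly larger heaps, and the compatibility between the standard factorization and the conjugacy order forces every super Lyndon heap that is ever peeled off to lie strictly below $M$.

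Finally I would dispose of the genuinely super cases, where $L$ or $M$ equals $F\circ F$ for an odd Lyndon heap $F$. Here $\Lambda(F\circ F)=[\Lambda(F),\Lambda(F)]$ is the nonzero graded self-bracket of an odd element, the graded Jacobi identity must be used with the signs dictated by the $\mathbb Z_2$-grading of $\mathcal H(I,\zeta)$, and the factor $\alpha_L=2$ recorded in Proposition \ref{shush5}(ii) has to be carried through the straightening. These cases introduce no new idea beyond careful sign and coefficient bookkeeping, but they are precisely where the argument departs from Lalonde's purely even setting and complete the proof.
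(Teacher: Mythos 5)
Your reduction via the graded Jacobi identity decomposes the \emph{larger} heap $M$, and that is where the argument genuinely breaks. Writing $\Sigma(M)=(M_1,M_2)$ you have $M<M_2$, so the terms arising from $[\Lambda(L),\Lambda(M_2)]$ (and from all subsequent brackets against $M_2$) are only bounded by $M_2$, and you must show that every contribution in the window $M\le N<M_2$ cancels. You flag this yourself, but your proposed fix does not close it. The straightening in $\mathbb{C}\,\mathcal{H}(I,\zeta)$ requires, at every step, that the minimal heap in the support of the current Lie element be super Lyndon, so that a multiple of $\Lambda(N_0)$ can be subtracted; that statement is essentially the spanning half of Theorem \ref{lafps}, which the paper deduces \emph{from} this very proposition, so invoking it here is circular. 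Even granting it, the triangularity of Proposition \ref{shush5}(iii) only says each subtraction moves the support upward in the total order; nothing in it bounds the peeled heaps by $M$. Your sentence ``the compatibility between the standard factorization and the conjugacy order forces every super Lyndon heap that is ever peeled off to lie strictly below $M$'' is precisely the hard combinatorial content of Lalonde's Theorem 4.4 and is asserted, not proved. Your leading-term claim also fails in degenerate situations: if $L$ and $M$ commute as heaps, so that $L\circ M=M\circ L$, the putative minimal terms cancel and the support of $[\Lambda(L),\Lambda(M)]$ need not begin at $L\circ M$ at all.

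The paper avoids all of this by never decomposing $M$. It quotes Lalonde's Theorem 4.4 of \cite{la93} wholesale for the case where both $L$ and $M$ are Lyndon (your ``main obstacle'' is thereby outsourced entirely), and handles the genuinely super cases by decomposing the \emph{smaller} element: if $L=L_1\circ L_1$ with $L_1$ odd Lyndon, then
\begin{equation*}
[\Lambda(L),\Lambda(M)]=[[\Lambda(L_1),\Lambda(L_1)],\Lambda(M)]=2\,[\Lambda(L_1),[\Lambda(L_1),\Lambda(M)]],
\end{equation*}
and since $L_1<L<M$, every inner bracket has both arguments strictly below $M$; the bound $N<M$ then propagates through repeated applications of the Lyndon--Lyndon case (brackets with arguments out of order are flipped with a graded sign and the Lyndon case applied again), so no cancellation is ever needed. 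The case where both $L$ and $M$ are squares of odd Lyndon heaps is treated by iterating the same identity. If you want a self-contained proof along your inductive lines, you would need to run the induction on the standard factorization of the left factor, as the paper's word-level Lemma \ref{s2} does, rather than on $\Sigma(M)$.
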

\begin{proof}
	The proof is by case by case analysis. 
	
	Case (i):- Suppose $L, M $ are Lyndon heaps satisfying $L<M$  then result follows from \cite[Theorem 4.4]{la93}. 
	
	Case (ii):-	
	Suppose exactly one of $L, M$ is a super Lyndon heap. Without loss of generality assume that $L= L_1  \circ L_1$ where $L_1$ is Lyndon heap in ${\mathcal{H}_1(I,\zeta)}$ and $M$ is an arbitrary Lyndon heap. Now,
	
\begin{math}
$\begin{align} [\Lambda(L),\Lambda(M)] &=[[\Lambda(L_1),\Lambda(L_1)],\Lambda(M)]\\ &= 2[\Lambda(L_1),[\Lambda(L_1),\Lambda(M)]]  \\ &= 2[\Lambda(L_1), \sum\limits_{\substack{{N_1 \in \mathcal{LH}(I,\zeta)}\\{N_1 < M}\\{deg(N_1)= deg(L_1 \circ M)}}} \alpha_{N_1} \Lambda(N_1)] \,\,\,(\because L_1 < L_1 \circ L_1 =L < M)\\  &= 2 \sum\limits_{\substack{{N_1 \in \mathcal{LH}(I,\zeta)}\\{N_1 < M}\\{deg(N_1)= deg(L_1 \circ M)}}} \alpha_{N_1} [\Lambda(L_1), \Lambda(N_1)] \\ &= 2 \left(\sum\limits_{\substack{{N_1 \in \mathcal{LH}(I,\zeta)}\\{L_1<N_1 < M}\\{deg(N_1)= deg(L_1 \circ M)}}} \alpha_{N_1} [\Lambda(L_1), \Lambda(N_1)]+ \sum\limits_{\substack{{N_1 \in \mathcal{LH}(I,\zeta)}\\{N_1 < L_1}\\{deg(N_1)= deg(L_1 \circ M)}}} \alpha_{N_1} [\Lambda(L_1), \Lambda(N_1)]\right)
\end{align}$
\end{math}

	Using case (i) in the first term of the above equation we get
	\begin{align*}
	\sum\limits_{\substack{{N_1 \in \mathcal{LH}(I,\zeta)}\\{L_1 < N_1 < M}\\{deg(N_1)= deg(L_1 \circ M)}}} \alpha_{N_1} [\Lambda(L_1), \Lambda(N_1)] &=  \sum\limits_{\substack{{N_1 \in \mathcal{LH}(I,\zeta)}\\{N_1 < M}\\{deg(N_1)= deg(L_1
				\circ M)}}} \alpha_{N_1} \left(  \sum\limits_{\substack{{N_2 \in \mathcal{LH}(I,\zeta)}\\{ N_2<N_1 < M}\\{ deg(N_2)=deg(L_1\circ N_1)=deg(L\circ M)}}} \alpha_{N_2}\Lambda(N_2)\right) \\
	&=\sum\limits_{\substack{{N_2 \in \mathcal{LH}(I,\zeta)}\\{ N_2< M}\\{ deg(N_2)=deg(LoM)}}} \underbrace{\left( \sum\limits_{\substack{{N' \in \mathcal{LH}(I,\zeta)}\\{N_2< N' < M}\\{deg(N')= deg(L_1\circ M)}}} \alpha_{N'}\right)}_{\text{some constant } c_{N_2}} \alpha_{N_2}\Lambda(N_2)\\
	&= \sum\limits_{\substack{{N_2 \in \mathcal{LH}(I,\zeta)}\\{ N_2 < M}\\{ deg(N_2)=deg(L\circ M)}}} (c_{N_2}\alpha_{N_2})\Lambda(N_2)
	\end{align*}
	For the second summation, $[\Lambda(L_1), \Lambda(N_1)]= -(-1)^{a_{N_1}b_{L_1}} [\Lambda(N_1), \Lambda(L_1)]$ where $ a_{N_1}, b_{L_1} \in\{0,1\}$  according to $N_1, L_1 \in \mathcal{H}_i(I,\zeta)$ for $i \in \{0,1\}$. Therefore,

	$[\Lambda(L_1), \Lambda(N_1)] = -(-1)^{a_{N_1}b_{L_1}}\sum\limits_{\substack{{K \in \mathcal{LH}(I,\zeta)}\\{ K<L_1 < M}\\{ deg(K)=deg(N_1\circ L_1)=deg(L \circ M)}}} \alpha_K\Lambda(K)$
	
	$=	\sum\limits_{\substack{{N_1 \in \mathcal{LH}(I,\zeta)}\\{N_1 < M}\\{deg(N_1)= deg(L_1 \circ M)}\\{L_1>N_1}}} \alpha_{N_1} [\Lambda(L_1), \Lambda(N_1)] $

$= \sum\limits_{\substack{{N_1 \in \mathcal{LH}(I,\zeta)}\\{N_1 < M}\\{deg(N_1)= deg(L_1\circ M)}}} \alpha_{N_1} [\Lambda(L_1), \Lambda(N_1)]$

$=-\sum\limits_{\substack{{N_1 \in \mathcal{LH}(I,\zeta)}\\{N_1 < M}\\{deg(N_1)= deg(L_1oM)}}} \alpha_{N_1} \left((-1)^{a_{N_1}b_{L_1}} \sum\limits_{\substack{{K \in \mathcal{LH}(I,\zeta)}\\{ K<L_1 < M}\\{ deg(K)=deg(N_1 \circ L_1)=deg(L \circ M)}}} \alpha_K\Lambda(K) \right)$

	$=- \sum\limits_{\substack{{K \in \mathcal{LH}(I,\zeta)}\\{ K< M}\\{ deg(K)=deg(L \circ M)}}} \underbrace{\left(  \sum\limits_{\substack{{N' \in \mathcal{LH}(I,\zeta)}\\{K< N' < M}\\{deg(N')= deg(L_1 \circ M)}}}(-1)^{a_{N'}b_{L_1}} \alpha_{N'}\right)}_{\text{constant }c_{K}} \alpha_K\Lambda(K)$
	
	$=\sum\limits_{\substack{{K \in \mathcal{LH}(I,\zeta)}\\{ K< M}\\{ deg(K)=deg(L \circ M)}}} \alpha_K'\Lambda(K) \,\, \text{ where } \alpha_K'= -c_{K}\alpha_K$

	$ \Rightarrow [\Lambda(L),\Lambda(M)]=	2\left(\sum\limits_{\substack{{N_2 \in \mathcal{LH}(I,\zeta)}\\{ N_2 < M}\\{ deg(N_2)=deg(L\circ M)}}} (c_{N_2}\alpha_{N_2})\Lambda(N_2)+\sum\limits_{\substack{{K \in \mathcal{LH}(I,\zeta)}\\{ K< M}\\{ deg(K)=deg(L \circ M)}}} \alpha_K'\Lambda(K)\right)$
	
		Case (iii):- Suppose $L= L_1 \circ L_1,\, M= M_1 \circ M_1 $ where $L_1, M_1$ are Lyndon heaps in ${\mathcal{H}_1(I,\zeta)}$ satisfying $L<M$. Then 
			\begin{align*}
	[\Lambda(L),\Lambda(M)] &= [[\Lambda(L_1),\Lambda(L_1)],\Lambda(M)]\\ &=2[\Lambda(L_1),[\Lambda(L_1), \Lambda(M)]]\\&= 2[\Lambda(L_1), \sum\limits_{\substack{{N \in \mathcal{LH}(I,\zeta)}\\{N < M}\\{deg(N)= deg(L_1 \circ M)}}} \alpha_{N}\Lambda(N)] \,\, (\text{by the previous case})\\ &= 2\sum\limits_{\substack{{N \in \mathcal{LH}(I,\zeta)}\\{N < M}\\{deg(N)= deg(L_1 \circ M)}}} \alpha_{N}[\Lambda(L_1), \Lambda(N)]
		\end{align*}
	\begin{align*}
	 &=2 \left(\sum\limits_{\substack{{N \in \mathcal{LH}(I,\zeta)}\\{L_1<N < M}\\{deg(N)= deg(L_1 \circ M)}}} \alpha_{N}[\Lambda(L_1), \Lambda(N)]+\sum\limits_{\substack{{N \in \mathcal{LH}(I,\zeta)}\\{N < M \text{ and } L_1>N}\\{deg(N)= deg(L_1 \circ M)}}} \alpha_{N}[\Lambda(L_1), \Lambda(N)]\right)
	\end{align*}

	For those $N \in \mathcal{LH}(I,\zeta)$ such that $L_1 <N$ then by first case$$[\Lambda(L_1), \Lambda(N)]=\sum\limits_{\substack{{K \in \mathcal{LH}(I,\zeta)}\\{K < N<M}\\{deg(K)= deg(L_1\circ N)}}} \beta_{K}\Lambda(K)$$
	\begin{align*}
	\Rightarrow \sum\limits_{\substack{{N \in \mathcal{LH}(I,\zeta)}\\{N < M}\\{deg(N)= deg(L_1 \circ M)}\\{L_1<N}}} \alpha_{N}[\Lambda(L_1), \Lambda(N)] &=\sum\limits_{\substack{{N \in \mathcal{LH}(I,\zeta)}\\{N < M}\\{deg(N)= deg(L_1\circ M)}}} \alpha_{N} \left(  \sum\limits_{\substack{{K \in \mathcal{LH}(I,\zeta)}\\{ K< M}\\{ deg(K)=deg(L \circ M)}}} \beta_K\Lambda(K)\right) \\&= \sum\limits_{\substack{{K \in \mathcal{LH}(I,\zeta)}\\{ K< M}\\{ deg(K)=deg(L\circ M)}}} \alpha_K'\Lambda(K)
	\end{align*}

	For those $N \in \mathcal{LH}(I,\zeta)$ such that $L_1 >N$ then 
	\begin{align*}
	[\Lambda(L_1), \Lambda(N)] &=-[\Lambda(N), \Lambda(L_1)]\\ &= - \sum\limits_{\substack{{N \in \mathcal{LH}(I,\zeta)}\\{K_2 < L_1}\\{deg(K_2)= deg(N\circ L_1)}}} \beta_{K_2}\Lambda(K_2)
	\end{align*}
	\begin{align*}
	\Rightarrow \sum\limits_{\substack{{N \in \mathcal{LH}(I,\zeta)}\\{N < M}\\{deg(N)= deg(L_1 \circ M)}\\{L_1>N}}} \alpha_{N}[\Lambda(L_1), \Lambda(N)] &= \sum\limits_{\substack{{N \in \mathcal{LH}(I,\zeta)}\\{N < M}\\{deg(N)= deg(L_1\circ M)}}} \alpha_{N} \left(  \sum\limits_{\substack{{K_2 \in \mathcal{LH}(I,\zeta)}\\{ K<L_1< M}\\{ deg(K_2)=deg(L\circ M)}}} \beta_{K_2}\Lambda(K_2)\right) \\&= \sum\limits_{\substack{{K_2 \in \mathcal{LH}(I,\zeta)}\\{ K_2< M}\\{ deg(K_2)=deg(L\circ M)}}} \alpha_{K_2}'\Lambda(K_2)
	\end{align*}
	$\Rightarrow [\Lambda(L),\Lambda(M)]=2\left(\sum\limits_{\substack{{K \in \mathcal{LH}(I,\zeta)}\\{ K< M}\\{ deg(K)=deg(L\circ M)}}} \alpha_K'\Lambda(K)+\sum\limits_{\substack{{K_2 \in \mathcal{LH}(I,\zeta)}\\{ K_2< M}\\{ deg(K_2)=deg(L\circ M)}}} \alpha_{K_2}'\Lambda(K_2)\right)$

	This completes the proof.
	
\end{proof}

By the above proposition, the Lie subsuperalgebra generated by $\mathcal{B}=\{ \Lambda(L): L$ is super Lyndon heap\} in $\mathcal{LS}(G,\Psi)$ contains $I$. So this subalgebra is equal to $\mathcal{LS}(G,\Psi)$. This completes the proof of Theorem \ref{lafps} and in turn, gives the Lyndon basis for the free roots spaces of BKM superalgebra $\lie g$ whose associated supergraph is $(G,\Psi)$ [c.f Lemma \ref{identification lem}].

\begin{example}\label{basis1_ex1} Consider the BKM superalgebra $\lie g$ associated with the BKM supermatrix $$A=\begin{bmatrix}
	2 & -1 & 0&0&0&0\\
	-1 & -3 &-4&-1&0&0\\
	0&-4&-4& 0&0&-1\\
	0&-1&0&2&-1&0\\
	0&0&0&-1&-2&0\\
	0&0&-1&0&0&-3\\ 
	\end{bmatrix}.$$ The quasi-Dynkin diagram $G$ of $\lie g$ is as follows: 	\begin{center}
		
		\begin{tikzpicture}
		\tikzstyle{every node}=[draw, shape=circle];
		\node (a) at (0,0){$\alpha_1$};
		\node (b) at (2,0){$\alpha_2$};
		\node (c) at (4,1){$\alpha_3$};
		\node (d) at (4,-1){$\alpha_4$};
		\node (e) at (6,1){$\alpha_6$};
		\node (f) at (6,-1){$\alpha_5$};
		\draw (a)--(b)--(c)--(e);
		\draw (b)--(d)--(f);
		\end{tikzpicture}
	\end{center}
	
	We have $I=\{ 1,2,3,4,5,6\}, \Psi=\{3,5\}$ and $I^{re} =\{1,4\}$. Assume the natural total order on $I$. Let $\bold k = (0,0,3,0,0,3) \in \mathbb Z_+[I]$. Then $\eta(\bold k)= 3\alpha_3+3\alpha_6 \in \Delta_+^1$.
	
	Fix $i=3$ (minimal element in the support of $\bold k$), then the super Lyndon heaps of weight $\eta(\bold k)$ 
	are $\{336636,333666,336366\}$ with standard factorization $3366|36, 3|33666$ and $3|36366$. The associated Lie monomials

 $$\{[[3,[[3,6],6]],[3,6]], [3,[3,[[[3,6],6],6]]],   [3,[[3,6],[[3,6],6]]]\}$$ spans $\mathfrak{g}_{\eta(\bold k)}$. We have $\mult(\eta(\bold k))=3$ [c.f. Example \ref{ch_ex1}]. So these Lie monomials form a basis for $\mathfrak{g}_{\eta(\bold k)}.$ 
\end{example}

\begin{example}\label{basis1_ex2}Consider the BKM superalgebra $\lie g$ associated with the BKM supermatrix $$A=\begin{bmatrix}
	2 & -1 & 0&0&0&0\\
	-1 & -3 &-1&0&0&0\\
	0&-2&-4& -1&0&0\\
	0&0&-1&2&-1&0\\
	0&0&0&-1&-2&-1\\
	0&0&0&0&-1&-3\\ 
	\end{bmatrix}.$$ The quasi-Dynkin diagram $G$ of $\lie g$ is as follows: 	\begin{center}
		
		\begin{tikzpicture}
		\tikzstyle{every node}=[draw, shape=circle];
		\node (a) at (0,0){$\alpha_1$};
		\node (b) at (2,0){$\alpha_2$};
		\node (c) at (4,0){$\alpha_3$};
		\node (d) at (6,0){$\alpha_4$};
		\node (e) at (8,0){$\alpha_6$};
		\node (f) at (10,0){$\alpha_5$};
		\draw (a)--(b)--(c)--(d)--(e)--(f);
		\end{tikzpicture}
	\end{center}
	We have $I=\{ 1,2,3,4,5,6\},  \, \Psi=\{3,5\}, I^{re} =\{1,4\}, \eta(\bold k)= 2\alpha_3+\alpha_4+2 \alpha_5+\alpha_6 $. Assume the natural total order on $I$. Fix $i=3$, observe that the Lyndon heaps 
	of weight $\eta(\bold k)$ are $\{334556,334565\}$ with standard factorizations $3|34565,3|34556$ respectively. The associated Lie monomials are $$\{[3,[3,[4,[[5,6],5]]]], [3,[3,[4,[5,[5,6]]]]]\}$$ which form a spanning set of $\mathfrak{g}_{\eta(\bold k)}$. Since $\mult(\eta(\bold k))=2$ [c.f. Example \ref{multf0}]. These lie monomials form a basis of $\mathfrak{g}_{\eta(\bold k)}$.
	
\end{example}
\hfill
\section{Main result II: LLN basis of BKM superalgebras}\label{section 5}

Let $\lie g$ be the BKM Lie superalgebra associated with a Borcherds-Cartan matrix $(A,\Psi)$.  Let $(G,\Psi)$ be the quasi-Dynkin diagram of $\lie g$ with the vertex set $I$ [c.f. Definition \ref{qdd}]. In this section, we extend Theorem \ref{mainthmbb} to the case of free root spaces of $\lie g$.

In what follows in this section, we use the super-Jacobi identity (up to sign) to prove our results. Also, whenever we fix an $i \in I$, it is assumed that $i$ is the least element in the total order of $I$.

\subsection{Initial alphabet and Left normed Lie word associated with a word}
Let $\bold w\in M(I, G)$ [c.f. Section \ref{fpcsm}] and $\bold w=a_1\cdots a_r$ be an element in the class $\bold w$. We define the length of the word $|\bold w|=r$. We define, $i(\bold w) = |\{j: a_j = i\}|$ and
$\supp (\bold w)=\{i\in I :  i(\bold w)\neq 0\}$. We define the weight of $\bold w$ to be
$\wt (\bold w)=\sum\limits_{i\in I} i(\bold w) \alpha_i.$
The following definition of the initial alphabet is different from the one given in \cite[Section 4]{akv17}. Our definition is compatible with the definition of pyramids given in Section \ref{pyramidsec} and pyramids  will be the main tool in this section. For $i \in I$, its initial multiplicity in $\bold w$ is defined to be the largest $k \geq 0$ for which there exists  $\bold u\in M(I, G)$ such that $\bold w= i^k \bold u $.  We define the {\em initial alphabet} $\rm{IA_m}(\bold w)$ of $\bold w$ to be the multiset in which each $i \in I$ occurs as many times as its initial multiplicity in $\bold w$. The underlying set is denoted by $\rm{IA}(\bold w)$. The left normed Lie word associated with $\bold w$ is defined by 
\begin{equation}\label{e(w)}
	e(\bold w)=[[\cdots [[a_1,a_2],a_3]\cdots,a_{r-1}]a_r] \in \lie g.
\end{equation}
Using the Jacobi identity, it is easy to see that the association $\bold w\mapsto e(\bold w)$ is well-defined and preserves the $\mathbb Z_2-$grading.
\subsection{Lyndon words and their Standard factorization}
For a fixed $i \in I$ (which is assumed to be minimal in the total order on $I$), consider the set $$\mathcal{X}_{i}=\{\bold w\in M(I, G): \rm{IA}_m(\bold w)=\{i\} \text{ and } i(\bold w) = 1\}.$$ Observe that $\mathcal{X}_i$ (and hence $\mathcal{X}_i^*$) is $\mathbb Z_2-$graded and also totally ordered using \eqref{totor}. We denote by $FLS(\mathcal{X}_i)$ the free Lie superalgebra generated by $\mathcal{X}_{i} = \mathcal{X}_{i,0} \sqcup\mathcal{X}_{i,1}$ where $ \mathcal{X}_{i,0}$ (resp. $\mathcal X_{i,1}$) is the set of even (resp. odd) elements in $\mathcal X_i$. A non--empty word $\bold w\in \mathcal{X}_i^{*}$ is called a Lyndon word if it satisfies one of the following equivalent definitions:
\begin{itemize}
	\item $\bold w$ is strictly smaller than any of its proper cyclic rotations.
	\item $\bold w \in \mathcal{X}_i$ or $\bold w=\bold u \bold v$ for Lyndon words $\bold u$ and $\bold v$ with $\bold u<\bold v$.
\end{itemize}
There may be more than one choice of $\bold u$ and $\bold v$ with $\bold w=\bold u \bold v$ and $\bold u<\bold v$ but if $\bold v$ is of maximal possible length we call it the standard factorization. Equivalently, we can define $\bold v$ to be the minimal Lyndon word in the lexicographic order satisfying $\bold w =\bold u \bold v$. This is called the standard factorization of $\bold w$ and denoted by $\sigma(\bold w) = (\bold u,\bold v)$. Note that, when $G$ is a complete graph, the heap monoid $\mathcal H(I,\zeta)$ is isomorphic to the free monoid $I^*$. Further, the standard factorization $\Sigma(E)$ of a Lyndon heap $E \in \mathcal H(I,\zeta)$ coincides with the factorization $\sigma(E)$.
\subsection{Super Lyndon words and their associated Lie word}Next, we recall the definition and some properties of super Lyndon words and state the main theorem of this section, we define a word $\bold w\in \mathcal{X}_i^{*}$ to be super Lyndon if $\bold w$ satisfies one of the following conditions \cite{Chi06}:
\begin{itemize}
	\item $\bold w$ is a Lyndon word.
	\item $\bold w= \bold u \bold u$ where $\bold u \in \mathcal X_{i,1}^{*}$ is Lyndon. In this case, we define $\bold w=\bold u \bold u$ is the standard factorization of $\bold w$.
\end{itemize}
In \cite{mi85}, super Lyndon words are known as $s-$regular words. We will use Lyndon words (resp. super Lyndon words)  to construct a basis for the Borcherds algebras (resp. BKM Lie superalgebras).  To each super Lyndon word $\bold w\in \mathcal{X}_{i}^*$ we associate a Lie word $L(\bold w)$ in $FLS(\mathcal{X}_i)$ as follows. If $\bold w\in \mathcal{X}_i$, then $L(\bold w)=\bold w$ and otherwise $L(\bold w)=[L(\bold u),L(\bold v)]$ where $\bold w=\bold u \bold v$ is the standard factorization of $\bold w$. If $\mathcal X_{i,1}$ is empty then the map $L$ assigns Lie monomials in free Lie algebras to Lyndon words. For more details about Lyndon words and super Lyndon words, we refer the readers to \cite{Chisuper06,mi85,Re93}. 
The following result can be seen in \cite{Chisuper06,mi85} and the basis constructed is known as the Lyndon basis for free Lie superalgebras.
\begin{prop}\label{reuts}
	The set $\{L(\bold w): \bold w\in \mathcal{X}_i^* \text{ is super Lyndon}\}$ forms a basis of $FLS(\mathcal{X}_i)$.
\end{prop}
\begin{cor}\label{correuts}
	If the set $\mathcal X_{i,1}$ is empty then $FLS(\mathcal X_i)$ becomes the free Lie algebra $FL(\mathcal X_i)$. In this case, $\{L(\bold w): \bold w\in \mathcal{X}_i^* \text{ is  Lyndon}\}$ forms a basis of $FL(\mathcal{X}_i)$.
\end{cor}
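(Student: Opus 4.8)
The plan is to obtain this statement as a direct specialization of Proposition \ref{reuts}, by checking that the two objects appearing there---the free Lie superalgebra $FLS(\mathcal X_i)$ and the set of super Lyndon words---both degenerate to their non-super counterparts once $\mathcal X_{i,1} = \emptyset$. Thus no genuinely new argument is needed; one only has to verify two elementary reductions and then quote the proposition.

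For the first reduction, I would note that when $\mathcal X_{i,1}$ is empty the generating set $\mathcal X_i = \mathcal X_{i,0}$ is concentrated in $\mathbb Z_2$-degree $0$. Consequently every homogeneous element of $FLS(\mathcal X_i)$ is even, so for all homogeneous $x,y$ one has $(-1)^{|x||y|} = 1$. The super-antisymmetry $[x,y] = -(-1)^{|x||y|}[y,x]$ and the super-Jacobi identity then collapse to the ordinary antisymmetry and Jacobi identities. Hence $FLS(\mathcal X_i)$ satisfies precisely the defining relations of the free Lie algebra on $\mathcal X_i$, and by the relevant universal properties the canonical map $FL(\mathcal X_i) \to FLS(\mathcal X_i)$ is an isomorphism; that is, $FLS(\mathcal X_i) = FL(\mathcal X_i)$.

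For the second reduction, I would examine the definition of a super Lyndon word. Its second clause produces words of the form $\bold w = \bold u\bold u$ with $\bold u \in \mathcal X_{i,1}^*$ Lyndon; since Lyndon words are by convention non-empty, such a $\bold u$ must be a non-empty word in the alphabet $\mathcal X_{i,1}$. When $\mathcal X_{i,1} = \emptyset$ no such $\bold u$ exists, so this clause is vacuous and every super Lyndon word is simply a Lyndon word. Moreover the Lie word map $L$ is defined by the same recursion in both settings, so $\{L(\bold w) : \bold w \text{ super Lyndon}\}$ and $\{L(\bold w): \bold w \text{ Lyndon}\}$ are literally the same set. Combining the two reductions with Proposition \ref{reuts} yields that $\{L(\bold w) : \bold w \in \mathcal X_i^* \text{ is Lyndon}\}$ is a basis of $FL(\mathcal X_i)$.

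I expect no real obstacle here; the single point deserving care is confirming that the degenerate clause in the definition of super Lyndon words is genuinely empty, i.e.\ that $\bold u$ cannot be taken to be the empty word. This is exactly where the standing convention that Lyndon words are non-empty is used, and it is what prevents any spurious ``$\bold u \bold u$'' super Lyndon word from surviving the specialization.
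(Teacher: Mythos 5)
Your proof is correct and is exactly the specialization the paper intends: the corollary is stated without proof as an immediate consequence of Proposition \ref{reuts}, and your two reductions (the super bracket collapsing to the ordinary one when $\mathcal X_i=\mathcal X_{i,0}$, and the vacuity of the $\bold u\bold u$ clause since a Lyndon word $\bold u\in\mathcal X_{i,1}^*$ must be non-empty) are precisely the checks being taken for granted. Your care about the non-emptiness convention for Lyndon words is the right point to flag, and it is consistent with the paper's own remark that $\mathcal{FLS}(I)$ reduces to the free Lie algebra when $I_1=\emptyset$.
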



Universal property of the free Lie superalgebra $FLS(\mathcal X_i)$: Let $\lie l$ be a Lie superalgebra and let $\Phi: \mathcal X_i \to \lie l$ be a set map that preserves the $\mathbb Z_2$ grading. Then $\Phi$ can be extended to a Lie superalgebra homomorphism $\Phi: FLS(\mathcal X_i) \to \lie l$. 
\subsection{Idea of the proof}\label{motivation}
Let $\lie g$ be a BKM superalgebra with the associated supergraph $(G,\Psi)$. Define a map $\Phi : \mathcal X_i \to \lie g$ by $\Phi(\bold w) = e(\bold w)$, the left normed Lie word associated with $\bold w$. The map $\Phi$ preserves the $\mathbb Z_2$ grading. By the universal property, we have a Lie superalgebra homomorphism
\begin{equation}\label{homsursp}\Phi: FLS(\mathcal{X}_{i})\to \lie g,\ \ \bold w\mapsto e(\bold w)\ \ \forall\ 
\bold w\in \mathcal{X}_{i}.\end{equation}
Since $\Phi$ preserves the $Q_+ = \mathbb Z_+[I]$-grading and $\lie g$ can be finite-dimensional the map $\Phi$ need not be surjective. Let $\lie g^{i}$ be the image of the homomorphism $\Phi$ in $\lie g$. Then $\lie g^{i}$ is the Lie sub-superalgebra of $\lie g$ generated by $\{e(\bold w): \bold w\in \mathcal{X}_{i}\}$. Note that $\lie g^{i}$ is again $Q_+$-graded. Any basis of the free Lie superalgebra $FLS(\mathcal{X}_i)$ can be pushed forward through the map $\Phi$ to $\lie g^{i}$ and the image will be a spanning set of $\lie g^{i}$.  We construct a basis for the root space $\lie g_{\eta(\bold k)}$ from this spanning set; This is the main theorem of this section. This is done by identifying the following combinatorial model from \cite{akv17} with the set of super Lyndon heaps of weight $\bold k$.
\begin{equation}\label{cikg}
C^{i}(\bold k, G)=\{\bold w\in \mathcal{X}_i^*:  \bold w \text{ is a super Lyndon word},\  \rm{wt}(\bold w)=\eta(\bold k)\},\ \ \iota(\bold w)=\Phi\circ L(\bold w).
\end{equation}
The precise statement is given in the next subsection.

\subsection{Theorem \ref{mainthmb}: LLN basis of BKM superalgebras} In this subsection, we state Theorem \ref{mainthmb} and two main lemmas  which are essential to prove this theorem. 
The proofs of these lemmas are postponed to the subsequent subsection.
\begin{thm}\label{mainthmb}
	The set $\left\{\iota(\bold w): \bold w\in C^{i}(\bold k, G)\right\}$ is a basis of the root space $\lie g_{\eta(\bold k)}$. Moreover, if $k_i=1$, the set
	$\left\{e(\bold w): \bold w\in \mathcal{X}_i,\ \rm{wt}(\bold w)=\eta(\bold k)\right\}$
	forms a left-normed basis of $\lie g_{\eta(\bold k)}$.
\end{thm}
\begin{lem}\label{helplem1}
	The root space $\lie g_{\eta(\bold k)} = \lie g_{\eta(\bold k)}^i$ for $\bold k \in \mathbb Z_+[I]$ satisfying $k_i \le 1$ for $i \in I^{re}\sqcup \Psi_0$.
\end{lem}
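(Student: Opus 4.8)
Since $\Phi$ preserves the $Q_+$-grading, $\lie g^i$ is a graded subalgebra of $\lie g$ and therefore $\lie g^i_{\eta(\bold k)}=\lie g^i\cap\lie g_{\eta(\bold k)}\subseteq\lie g_{\eta(\bold k)}$; the whole content of the lemma is the reverse inclusion $\lie g_{\eta(\bold k)}\subseteq\lie g^i$. Throughout I take $i$ to be the least element of $\supp(\bold k)$, as dictated by the convention of this section. The plan is to transport the statement into the free partially commutative Lie superalgebra and there realise every spanning vector of $\mathcal{LS}_{\bold k}(G,\Psi)$ as a bracketing of super-letters with basis $i$.

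First I would apply Lemma \ref{identification lem}, which is available precisely because $\bold k$ is free, to replace $\lie g_{\eta(\bold k)}$ by $\mathcal{LS}_{\bold k}(G,\Psi)$. Under this identification the subalgebra $\lie g^i$ becomes the Lie subsuperalgebra $\mathcal S$ of $\mathcal{LS}(G,\Psi)$ generated by the super-letters with basis $i$, because the generating set $\{e(\bold w):\bold w\in\mathcal X_i\}$ of $\lie g^i$ corresponds, under $\psi$, to the set $\mathcal A_i(I,\zeta)$ of super-letters with basis $i$. Thus it suffices to prove $\mathcal{LS}_{\bold k}(G,\Psi)\subseteq\mathcal S$. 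Moreover, by Proposition \ref{reuts} the grade space $FLS(\mathcal X_i)_{\bold k}$ has the super Lyndon words of weight $\bold k$ as a basis, so $\mathcal S_{\bold k}=\Phi(FLS(\mathcal X_i)_{\bold k})$ is spanned by $\{\iota(\bold w):\bold w\in C^{i}(\bold k,G)\}$.

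Next I would compare this spanning set with the super Lyndon heaps basis. By Theorem \ref{lafps}, $\mathcal{LS}_{\bold k}(G,\Psi)$ is spanned by $\{\Lambda(\bold E):\bold E\text{ a super Lyndon heap of weight }\bold k\}$. Because $i=\min\supp(\bold k)$, every such heap lies in the free submonoid $\mathcal A_i^{*}(I,\zeta)$ generated by the super-letters with basis $i$ and, by Proposition \ref{liffl}, is a super Lyndon word $\bold w_{\bold E}$ over the alphabet $\mathcal A_i(I,\zeta)$. I would then show, by induction on the number of super-letters composing $\bold E$, that $\Lambda(\bold E)=\iota(\bold w_{\bold E})=\Phi(L(\bold w_{\bold E}))$: the base case is a single super-letter, where both sides equal $e(\bold w)$, and in the inductive step Proposition \ref{order} guarantees that the heap standard factorization $\Sigma(\bold E)$ and the word standard factorization $\sigma(\bold w_{\bold E})$ over $\mathcal A_i(I,\zeta)$ split $\bold E$ at the same place, so that the two defining recursions (of $\Lambda$ and of $\Phi\circ L$) coincide term by term. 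Consequently $\{\iota(\bold w):\bold w\in C^{i}(\bold k,G)\}=\{\Lambda(\bold E):\bold E\text{ super Lyndon of weight }\bold k\}$ spans $\mathcal{LS}_{\bold k}(G,\Psi)$, whence $\mathcal{LS}_{\bold k}(G,\Psi)\subseteq\mathcal S$ and, undoing the identification, $\lie g_{\eta(\bold k)}\subseteq\lie g^{i}$.

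The main obstacle is the inductive identity $\Lambda(\bold E)=\Phi(L(\bold w_{\bold E}))$. Two points need care: that the heap factorization and the super-letter word factorization select the same splitting of $\bold E$ (the heap-theoretic coincidence supplied by Propositions \ref{liffl} and \ref{order} of Lalonde \cite{la95}), and that the $\mathbb Z_2$-signs introduced by the super-Jacobi identity inside $\Phi\circ L$ match those built into $\Lambda$ in the exceptional case $\bold E=F\circ F$ with $F\in\mathcal H_1(I,\zeta)$ Lyndon. Once these are settled the induction is routine, and the equality of spanning sets closes the argument. I note that, logically, this lemma is only the \emph{spanning} half; the linear independence of $\{\iota(\bold w):\bold w\in C^{i}(\bold k,G)\}$ needed to upgrade the spanning set to a basis is reserved for the companion lemma, so no circularity with the basis statement of Theorem \ref{mainthmb} arises.
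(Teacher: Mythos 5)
There is a genuine gap, and it sits exactly at the base case of your induction. You assert that for a single super-letter $E$ both sides of your identity reduce to $e(E)$, i.e., that $\Lambda(E)=\Phi(L(\bold w_{E}))$ when $\bold w_{E}\in\mathcal X_i$. This is false in general: $\Phi\circ L$ stops at the super-letter level and sends $E$ to the left-normed word $e(E)$ of Equation \eqref{e(w)}, whereas $\Lambda$ keeps recursing through Lalonde's heap standard factorization all the way down to single vertices. The paper proves (the unlabelled proposition of Section \ref{same}) that $\Lambda(E)=e(E)$ for a super-letter with standard word $a_1a_2\cdots a_r$ if and only if $a_1<a_r\leq a_{r-1}\leq \cdots \leq a_2$, and Examples \ref{basis1_ex2} and \ref{basis2_ex2} exhibit the failure concretely: for the super-letter $34565$ one has $\Lambda(34565)=[3,[4,[[5,6],5]]]$ while $e(34565)=[[[[3,4],5],6],5]$, so the Lyndon heaps basis and the LLN basis of $\lie g_{\eta(\bold k)}$ are genuinely different sets of vectors. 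Your matching of factorizations \emph{above} the super-letter level is correct---that is Proposition \ref{b=b}, resting on Lemma \ref{eq_stfact} and Propositions \ref{order} and \ref{liffl}---but the two recursions bottom out at different elements, so the claimed elementwise equality of spanning sets does not hold.

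Once the pointwise identity fails, your argument reduces to the bare claim that $\{\iota(\bold w):\bold w\in C^{i}(\bold k,G)\}$ spans $\mathcal{LS}_{\bold k}(G,\Psi)$, and the only route left inside your framework is a dimension count, which requires the linear independence you explicitly deferred---precisely the circularity you disclaim. Note moreover that in $i$-degree one the subalgebra $\mathcal S$ is just the linear span of the generators $e(\bold w)$, $\bold w\in\mathcal X_i$, since any bracket of two or more generators has $i$-degree at least two; hence even the weaker statement $\Lambda(E)\in\mathcal S$ for a super-letter $E$ already amounts to rewriting $\Lambda(E)$ as a combination of left-normed words, and your proposal supplies no mechanism for this. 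That rewriting is the real content of the lemma, and it is what the paper does directly inside $\lie g$: Lemma \ref{s3} uses the super-Jacobi identity to express every left-normed word of weight $\eta(\bold k)$ through words in $\mathcal X_i^{*}$, and Lemma \ref{spanlem}, by induction on $\htt(\eta(\bold k))$ with Lemmas \ref{s1}, \ref{s2} and \ref{s4} controlling brackets of super Lyndon words, concludes that $\lie g_{\eta(\bold k)}$ lies in the span of $\{e(L(\bold w)):\bold w\in C^{i}(\bold k,G)\}\subseteq \lie g^{i}$. Your opening reduction via Lemma \ref{identification lem} is reasonable, but it cannot bypass this Jacobi computation.
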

\begin{lem}\label{helplem2}
	With the notations as above we have 
	$$|C^{i}(\bold k, G)| = \dim FLS_{\bold k}(\mathcal X_i) = \dim \mathcal{LS}_{\bold k}(G,\Psi).$$
\end{lem}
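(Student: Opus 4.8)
The plan is to prove that all three quantities count one and the same family of combinatorial objects --- the super Lyndon heaps of weight $\bold k$ --- and to split the argument into two independent identifications: first $|C^{i}(\bold k,G)| = \dim FLS_{\bold k}(\mathcal X_i)$, which is a mere restatement of the super Lyndon basis of the free Lie superalgebra, and then $\dim FLS_{\bold k}(\mathcal X_i) = \dim \mathcal{LS}_{\bold k}(G,\Psi)$, which is the substantive part and rests on Lalonde's passage between super-letters and heaps. Throughout I take $i$ to be the least element of $\supp(\bold k)$ (compatible with the standing convention that a fixed $i$ is least in the order on $I$); this guarantees that every heap of weight $\bold k$ actually occupies the position $i$.

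For the first equality, recall that $FLS(\mathcal X_i)$ is $\mathbb Z_+[I]$-graded by assigning each generator $\bold w\in\mathcal X_i$ the weight $\wt(\bold w)$, and that both the Lie bracket and the map $L$ respect this grading, so every $L(\bold w)$ is homogeneous of weight $\wt(\bold w)$. By Proposition \ref{reuts} the set $\{L(\bold w):\bold w\in\mathcal X_i^*\text{ super Lyndon}\}$ is a basis of $FLS(\mathcal X_i)$; selecting the elements lying in the homogeneous component of weight $\eta(\bold k)$ gives a basis of $FLS_{\bold k}(\mathcal X_i)$ indexed by the super Lyndon words of weight $\eta(\bold k)$. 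By the definition \eqref{cikg} this indexing set is precisely $C^{i}(\bold k,G)$, whence $\dim FLS_{\bold k}(\mathcal X_i)=|C^{i}(\bold k,G)|$. No further computation is needed.

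For the second equality I would compute $\dim\mathcal{LS}_{\bold k}(G,\Psi)$ through Theorem \ref{lafps}: it equals the number of super Lyndon heaps of weight $\bold k$ in $\mathcal H(I,\zeta)$. It then suffices to exhibit a weight- and parity-preserving bijection between super Lyndon words over $\mathcal X_i$ and super Lyndon heaps. The bridge is the super-letter description of $\mathcal X_i$: a standard word $\bold w$ lies in $\mathcal X_i$ exactly when the associated heap $\psi(\bold w)$ is a super-letter with basis $i$ --- it is a pyramid because $\mathrm{IA}(\bold w)=\{i\}$, elementary because $i(\bold w)=1$, and admissible because $i$ is least --- so $\psi$ identifies the free monoid $\mathcal X_i^*$ with the free submonoid $\mathcal A_i^*(I,\zeta)$ of $\mathcal H(I,\zeta)$ generated by the basis-$i$ super-letters, the freeness and the order-compatibility being \cite[Prop.~1.3.5, 2.1.5, 2.1.6]{la95}. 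Under this identification a word over $\mathcal X_i$ is Lyndon if and only if the corresponding heap is a Lyndon heap, by \cite[Prop.~2.1.7]{la95}; and because $i=\min\supp(\bold k)$, every Lyndon heap of weight $\bold k$ indeed lies in $\mathcal A_i^*(I,\zeta)$, so the correspondence is onto. Since $\psi$ preserves weights and the $\mathbb Z_2$-grading, an odd Lyndon word maps to an odd Lyndon heap, and the doubling $\bold u\bold u\leftrightarrow F\circ F$ matches the two descriptions of super Lyndon objects; hence super Lyndon words of weight $\bold k$ biject with super Lyndon heaps of weight $\bold k$. Combining the two identifications yields $|C^{i}(\bold k,G)|=\dim FLS_{\bold k}(\mathcal X_i)=\#\{\text{super Lyndon heaps of weight }\bold k\}=\dim\mathcal{LS}_{\bold k}(G,\Psi)$.

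The main obstacle is the middle identification, and specifically the claim that every (super) Lyndon heap of weight $\bold k$ factors through basis-$i$ super-letters, i.e.\ lies in $\mathcal A_i^*(I,\zeta)$; this is where the choice $i=\min\supp(\bold k)$ and Lalonde's factorization results are indispensable, and it is exactly the point that would fail for $i\notin\supp(\bold k)$, in which case $C^{i}(\bold k,G)=\emptyset$ while $\mathcal{LS}_{\bold k}(G,\Psi)$ may be nonzero. The remaining points --- the parity bookkeeping in the doubling case and the verification that the lexicographic order induced on $\mathcal A_i^*(I,\zeta)$ from $\mathcal H(I,\zeta)$ agrees with the order used to define Lyndon words over $\mathcal X_i$ --- are routine and handled by \cite[Prop.~2.1.6]{la95}.
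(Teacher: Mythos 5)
Your proof is correct and follows essentially the same route as the paper: the identification of $\mathcal X_i^*$ with $\mathcal A_i^*(I,\zeta)$ via the super-letter realization \eqref{realization}, Lalonde's order and Lyndon correspondences (Propositions \ref{order} and \ref{liffl}), their super extension (the paper's Proposition \ref{sliffsl}, which is exactly your parity/doubling matching), and the counts via Proposition \ref{reuts} and Theorem \ref{lafps}. Your explicit attention to the surjectivity point --- that every super Lyndon heap of weight $\bold k$ lies in $\mathcal A_i^*(I,\zeta)$ because $i$ is minimal in $\supp(\bold k)$ --- is a step the paper leaves implicit in its chain of equalities, but it is the same argument rather than a different one.
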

From the above lemmas Theorem \ref{mainthmb} can be deduced as follows. Since $\lie g_{\eta(\bold k)} = \lie g_{\eta(\bold k)}^i$ we get $\left\{\iota(\bold w): \bold w\in C^{i}(\bold k, G)\right\}$ is a spanning set for $\lie g_{\eta(\bold k)}$ of cardinality equal to $|C^{i}(\bold k, G)|$. Now, Lemmas  \ref{helplem2} and \ref{identification lem}  show that $\left\{\iota(\bold w): \bold w\in C^{i}(\bold k, G)\right\}$ is in fact a basis. 


\subsection{Examples to Theorem \ref{mainthmb}}
First, we explain Theorem \ref{mainthmb} by an example before giving the proofs of the Lemmas \ref{helplem1} and \ref{helplem2}.
\begin{example}\label{basis2_ex1} Consider the root space $\lie g_{\eta(\bold k)}$ where $\eta(\bold k)= 3\alpha_3+3\alpha_6 $ from Example \ref{basis1_ex1}. 
	Fix $i=3$. Super Lyndon words of weight $\eta(\bold k)$ in $C^3(\bold k, G)=\{\bold w \in \chi^\ast_3: \wt(\bold w)=\eta(\bold k), \bold w \text{ is super Lyndon}  \}$ are $\{336636,333666,336366\}$ and 
	$\{[[3,[[3,6],6]],[3,6]], [3,[3,[[[3,6],6],6]]],   [3,[[3,6],[[3,6],6]]]\}$ spans $\mathfrak{g}_{\eta(\bold k)}$. We have $\mult(\eta(\bold k))=3$ [c.f. Example \ref{ch_ex1}]. So these Lie monomials form a basis for $\mathfrak{g}_{\eta(\bold k)}.$ 
\end{example}

\begin{example}\label{basis2_ex2} Consider the root space $\lie g_{\eta(\bold k)}$ where $\eta(\bold k)= 2\alpha_3+\alpha_4+2 \alpha_5+\alpha_6 $ from Example \ref{basis1_ex2}. 
	Fix $i=3$, observe that $\chi_3=\{3,34,345,3456,344,34545,\cdots\}$. Only Lyndon words on $\chi_3^\ast$ of weight $\eta(\bold k)$ are $\{334556,334565\}$ with standard factorization $3|34556,3|34565$ respectively. So the  corresponding Lie monomials 
		$\{[3,[[[[3,4],5],6],5]], [3,[[[[3,4],5],5],6]]\}$ spans $\mathfrak{g}_{\eta(\bold k)}$. We have $\mult(\eta(\bold k))=2$ [c.f. Example \ref{multf0}]. So these Lie monomials form a basis for  $\mathfrak{g}_{\eta(\bold k)}$. \end{example}


\subsection{Proof of Lemma \ref{helplem1}}
We claim that the root space $\lie g_{\eta(\bold k)} = \lie g_{\eta(\bold k)}^i$. This is done in multiple steps. First, we claim that the left normed Lie words of weight $\bold k$ starting with a fixed $i \in I$ spans $\lie g_{\eta(\bold k)}$. More precisely,
\begin{lem}\label{s3}
	Fix an index $i\in I$. Then the root space $\lie g_{\eta(\bold k)}$ is spanned by the set of left normed lie words $\{e(\bold w): \bold w \in \chi_i^{\ast}, \wt(\bold w) = \eta(\bold k)\}$.
\end{lem}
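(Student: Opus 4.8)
The plan is to prove the statement by first reducing to left-normed words and then forcing the first letter to be $i$; note that the assertion is vacuous unless $i\in\supp(\bold k)$, so I assume $k_i\ge 1$ (which holds under the standing convention that $i$ is the minimal element of $\supp(\bold k)$). Since $\lie g_{\eta(\bold k)}\subseteq\lie n_+$ and $\lie n_+$ is generated as a Lie superalgebra by $\{e_j:j\in I\}$, and the $Q_+$-grading is respected, the space $\lie g_{\eta(\bold k)}$ is spanned by iterated super-brackets of the $e_j$ of total weight $\eta(\bold k)$. Repeatedly applying the super-Jacobi identity in its derivation form $[a,[b,c]]=[[a,b],c]+(-1)^{|a||b|}[b,[a,c]]$ rewrites any such iterated bracket as a $\mathbb Z$-linear combination of left-normed words $e(\bold v)$ (see \eqref{e(w)}) with $\wt(\bold v)=\eta(\bold k)$; this is the usual fact that left-normed monomials span a Lie superalgebra generated by a set, now tracked with super-signs. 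Equivalently, in each height $\ge 2$ one has $\lie g_{\eta(\bold k)}=\sum_{j\in I}[\lie g_{\eta(\bold k)-\alpha_j},e_j]$, which will drive an induction on $\mathrm{ht}(\eta(\bold k))$.

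The heart of the argument is to show every such $e(\bold v)$ is a combination of left-normed words whose first letter is $i$. I would prove this by induction on height using the recursion above. If $e(\bold v)=\pm[e(\bold u),e_j]$ with $\wt(\bold u)=\eta(\bold k)-\alpha_j$ and the letter $i$ still occurs in $\bold u$ (that is, $j\ne i$, or $j=i$ and $k_i\ge 2$), then by induction $e(\bold u)$ is a combination of left-normed words beginning with $i$, and appending $e_j$ on the right, $[e(\bold u),e_j]$, preserves both the left-normed shape and the initial letter $i$. The remaining case is $j=i$ with $k_i=1$, where $\bold u$ contains no $i$; here $e(\bold v)=\pm[e_i,e(\bold u)]$ and I must expand $\mathrm{ad}(e_i)$ of a left-normed word. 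The key sub-claim, proved by an inner induction on $|\bold u|$ via the same derivation identity, is that $[e_i,e(\bold u)]$ is a $\mathbb Z$-linear combination of left-normed words with first letter $i$: writing $e(\bold u)=[Y,u_{r-1}]$ one gets $[e_i,[Y,u_{r-1}]]=[[e_i,Y],u_{r-1}]+(-1)^{|e_i||Y|}[Y,[e_i,u_{r-1}]]$, the first summand being handled by the inner induction and the second, after one transposition $[Y,[e_i,u_{r-1}]]=\pm[[e_i,u_{r-1}],Y]$, expanding into left-normed words headed by the generator $e_i$.

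It then remains to upgrade ``first letter $i$'' to membership in $\mathcal X_i^{*}$. This uses the graph commutation relations $[e_a,e_b]=0$ for non-adjacent $a,b$: a left-normed word in which $i$ is immediately followed by a letter commuting with $i$ is zero, and more generally, passing to the standard-word representative $\st(\cdot)$ in $M(I,G)$ and discarding the resulting vanishing words, every nonzero left-normed word starting with $i$ is, up to sign, an $e(\bold w)$ with $\bold w\in\mathcal X_i^{*}$ (each factor $i\bold u_j$, with $\bold u_j$ free of $i$, lying in $\mathcal X_i$). I expect the main obstacle to be precisely this last bookkeeping together with the consistent tracking of super-signs through the repeated use of the super-Jacobi identity in the inner induction; the freeness of $\eta(\bold k)$ (the hypothesis $k_i\le 1$ on $I^{re}\sqcup\Psi_0$) is not needed for this spanning statement and enters only later, when one passes from a spanning set to a basis via Lemma \ref{identification lem} and Lemma \ref{helplem2}.
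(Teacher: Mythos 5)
Your proposal is correct and reaches the same destination by the same basic mechanism (super-Jacobi rewriting to force the leading letter to be $i$), but it is organized genuinely differently from the paper's proof. The paper also starts from the standard spanning set $\{e(\bold w):\bold w\in M_{\bold k}(I,G)\}$, but it then locates the \emph{first} occurrence of $i$ in $\bold w$, say at position $p+1$, and proves by induction on $p$ one explicit closed-form identity expanding $e(a_1\cdots a_p\, i)$ as a sum of left-normed words $e(i\,a_{j_1}\cdots a_{j_l}\,a_1\cdots \widehat{a_{j_l}}\cdots \widehat{a_{j_1}}\cdots a_p)$ indexed by decreasing sequences $j_1>j_2>\cdots>j_l$ of removed-and-prepended indices (illustrated on the word $4563$), after which the remaining letters $a_{p+2},\dots,a_r$ are appended one at a time by linearity of the bracket. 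You instead run an outer induction on $\htt(\eta(\bold k))$ via $e(\bold v)=[e(\bold u),e_j]$, isolating the only hard case $j=i$, $k_i=1$, which you dispatch with the classical inner ad-expansion lemma (for $X$ left-normed with leading letter $i$ and $Z$ left-normed, $[X,Z]$ expands into left-normed words with leading letter $i$); this avoids the paper's explicit combinatorial sum entirely and tracks super-signs through a single reusable identity, at the cost of giving no explicit coefficients. Two further points of comparison: (i) your remark that freeness of $\bold k$ plays no role in the spanning statement is accurate and matches the paper, whose proof likewise never invokes $k_i\le 1$ on $I^{re}\sqcup\Psi_0$; (ii) both arguments ultimately rest on the same bookkeeping you flag at the end, namely that a nonzero left-normed word with leading letter $i$ is $\pm e(\bold w)$ for some $\bold w\in\mathcal X_i^*$. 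Be aware that this step is not quite as automatic as ``factor at the occurrences of $i$'': a letter of a later factor may commute to the front of its own factor while being blocked by an earlier one, so one must rearrange within the commutation class (this is exactly the $i$-form factorization of \cite[Proposition 4.3]{akv17}, whose super analogue the present paper states only implicitly when it asserts that its expansion terms lie in $\mathcal X_i$ because any letter commuting with everything before it kills the word). Since the paper's own writeup is equally terse on this point, I would not call it a gap in your argument, but a complete version of your proof should prove that factorization lemma explicitly.
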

\begin{pf}
	We observe that $\bold w \in \chi_i^{\ast}$ if, and only if, $\rm{IA}(\bold w)= \{i\}$. It is well-known that, the set $\mathcal B = \{e(\bold w): \bold w \in M_{\bold k}(I,G)\}$ forms a spanning set for $\lie g_{\eta(\bold k)}$. We will prove that each element of $\mathcal B$ can be written as linear combination of left normed Lie words $e(\bold{w})$ satisfying $\rm{wt}(\bold w)=\eta(\bold k)$ and $\rm{IA}(\bold w)=\{i\}$.  Let $\bold w = a_1a_2\cdots a_r \in M_{\bold k}(I,G)$. 
	Assume that $a_1 =i$. 
	If $|\rm{IA}(\bold w)|>1$ then $e(\bold w) = 0$ and nothing to prove. If $|\rm{IA}(w)| = 1$ then we have $\rm{IA}(\bold w) = \{i\}$ and the proof follows in this case. Assume $ a_1 \neq i $ and consider the set $i(\bold w)= \{j: a_j=i\}$. Assume $\min\{i(\bold w)\}= p+1$ and set $\bold w' =a_1a_2\cdots a_p i $.  
	
	First, we claim that
	\begin{align*}
	e(\bold w')= &e(ia_1a_2\cdots a_p)+ \sum\limits_{j_1=2}^{p} e(ia_{j_1}a_1a_2\cdots \hat{a_{j_1}} \cdots a_p)+ \sum\limits_{1<j_2<j_1\leq p} e(ia_{j_1}a_{j_2}a_1a_2\cdots \hat{a_{j_2}}\cdots \hat{a_{j_1}} \cdots a_p)\\
	& +\sum\limits_{1<j_3<j_2<j_1\leq p} e(ia_{j_1}a_{j_2}a_{j_3}a_1a_2\cdots\hat{a_{j_3}}\cdots \hat{a_{j_2}}\cdots \hat{a_{j_1}}\cdots a_p)\\
	&+ \sum\limits_{1<j_4<j_3<j_2<j_1\leq p} e(ia_{j_1}a_{j_2}a_{j_3}a_{j_4}a_1a_2 \cdots\hat{a_{j_4}}\cdots\hat{a_{j_3}}\cdots\hat{a_{j_2}}\cdots \hat{a_{j_1}} \cdots a_p)+\cdots  \\
	&+ \sum\limits_{1<j_{p-2}<j_{p-3}<\cdots<j_2<j_1\leq p} e(ia_{j_1}a_{j_2}a_{j_3} \cdots a_{j_{p-3}}a_{j_{p-2}} a_1a_2 \cdots \hat{a_{j_{p-2}}} \cdots\hat{a_{j_{p-3}}}\cdots\hat{a_{j_2}}\cdots \hat{a_{j_1}}\cdots a_p) \\ 
	&+e(ia_pa_{p-1}\cdots a_2a_1)
	\end{align*}
	where $\hat{a}$ means the omission of the alphabet $a$ in the expression. 
	
	We explain the above equation with an example for better understanding. Consider the root space $\lie g_{\eta(\bold k)}$ where $\eta(\bold k)= 2\alpha_3+\alpha_4+2 \alpha_5+\alpha_6 $ from Example \ref{basis1_ex2}. Fix $i=3$ and consider the word $\bold w =456353$. Then $p=3$ and $\bold {w'} = 4563$.
	Now, \begin{align*}
	e(\bold w') &=[[[4,5],6],3]= [3,[[4,5],6]] = [[3,[4,5]],6]-[[4,5],[3,6]]\\
	&=[[[3,4],5],6]+[[4,[3,5]],6]-[[4,5],[3,6]]\\
	&=[[[3,4],5],6]-[[[3,5],4],6]+[[3,6],[4,5]]\\
	&= [[[3,4],5],6]-[[[3,5],4],6]+[[[3,6],4],5]+[4,[[3,6],5]]\\
	&=[[[3,4],5],6]-[[[3,5],4],6]+[[[3,6],4],5]-[[[3,6],5],4]\\
	&= e(3456)- e(3546)+ e(3645)-e(3654)
	\end{align*}

	We do induction on $p$. For $ p=1$, $\bold{w'} = a_1i \Rightarrow e(\bold{w'})= [i,a_1] = e(ia_1)$. Assume that the result is true for $ p=k.$ Now consider $p=k+1$
	\begin{equation}
	\label{e1} [[[[[[a_1,a_2],a_3],a_4] \cdots ,a_k],a_{k+1}],i]=[[[[[a_1^{'},a_3]a_4], \cdots,a_k],a_{k+1}],i]
	\end{equation}
	by taking  $[a_1,a_2]=a_1^{'}$. Using the induction hypothesis on right hand side of the above equation we get
	\begin{equation}
	\label{eq2}
	\begin{aligned}
	&[[[[[[[a_1^{'},a_{3}],a_4], \cdots,a_k],a_{k+1}],i] = \\	& e(ia_1^{'}a_3\cdots a_{k+1})+ \sum\limits_{3 \leq j \leq k+1} e(ia_ja_1^{'}a_3 \cdots \hat{a_j}\cdots a_{k+1}) \\
	&+\sum\limits_{3 \leq j_2 <j_1 \leq k+1} e(ia_{j_1}a_{j_2}a_1^{'}a_3 \cdots \hat{a_{j_2}} \cdots \hat{a_{j_1}}\cdots a_{k+1})
	+\cdots\\& +\sum\limits_{3\leq j_{k-1}<\cdots<j_2<j_1\leq {k+1}} e(ia_{j_1}a_{j_2}a_{j_3} \cdots a_{j_{k-1}} a_1^{'}a_3 \cdots\hat{a_{j_{k-1}}}\cdots\hat{a_{j_2}}\cdots \hat{a_{j_1}}\cdots a_{k+1})\\
	&+ e(ia_{k+1}a_k \cdots a_1^{'})
	\end{aligned}
	\end{equation}

	Now,  \begin{align*}
	e(ia_{j_1}a_{j_2} \cdots a_{j_t}a_1^{'})&= [\underbrace{[[[i,a_{j_1}],a_{j_2}], \cdots ,a_{j_t}]}_{x},[\underbrace{a_1}_{y}, \underbrace{a_2}_{z}]]\\
	&=\underbrace{[[[[[i,a_{j_1}],a_{j_2}], \cdots ,a_{j_t}],a_1],a_2]}_{[[x,y],z]}+ \underbrace{[[[[[i,a_{j_1}],a_{j_2}], \cdots ,a_{j_t}],a_2],a_1]}_{[[x,z],y]}\\
	&= e(ia_{j_1}a_{j_2}\cdots a_{j_t}a_1a_2)+ e(ia_{j_1}a_{j_2}\cdots a_{j_t}a_2a_1)
	\end{align*}
	\begin{align*}
	\Rightarrow e(ia_{j_1}a_{j_2} \cdots a_{j_t}a_1^{'}a_3\cdots a_{k+1})&= [[\underbrace{[[[[i,a_{j_1}],a_{j_2}], \cdots a_{j_t}]a_1^{'}]}_{e(ia_{j_1}a_{j_2} \cdots a_{j_t}a_1^{'})},a_3],\cdots a_{k+1}]\\
	&=[[[e(ia_{j_1}a_{j_2} \cdots a_{j_t}a_1^{'}),a_3], \cdots,a_{k+1}]\\
	&=[[[e(ia_{j_1}a_{j_2}\cdots a_{j_t}a_1a_2)+ e(ia_{j_1}a_{j_2}\cdots a_{j_t}a_2a_1),a_3], \cdots,a_{k+1}]\\
	&=[[e(ia_{j_1}a_{j_2}.. a_{j_t}a_1a_2),a_3],..,a_{k+1}]+[[ e(ia_{j_1}a_{j_2}.. a_{j_t}a_2a_1) ,a_3]..,a_{k+1}]\\
	&= e(ia_{j_1}a_{j_2}\cdots a_{j_t}a_1a_2a_3\cdots a_{k+1})+e(ia_{j_1}a_{j_2}\cdots a_{j_t}a_2a_1a_3\cdots a_{k+1})
	\end{align*} Using this in Equation \eqref{eq2} we get 
	\begin{align*}
	&[[[[[a_1^{'},a_3]a_4], \cdots,a_k],a_{k+1}],i] = \\
	& e(ia_1a_2a_3\cdots a_{k+1})+e(ia_2a_1a_3\cdots a_{k+1})\\
	&+ \sum\limits_{3 \leq j \leq k+1} \left( e(ia_ja_1a_2a_3 \cdots \hat{a_j}\cdots a_{k+1})+e(ia_ja_2a_1a_3 \cdots \hat{a_j}\cdots a_{k+1})\right)  \\
	&+\sum\limits_{3 \leq j_2 <j_1 \leq k+1} \left( e(ia_{j_1}a_{j_2}a_1a_2a_3 \cdots \hat{a_{j_2}} \cdots \hat{a_{j_1}}\cdots a_{k+1})+e(ia_{j_1}a_{j_2}a_2a_1a_3 \cdots \hat{a_{j_2}} \cdots \hat{a_{j_1}}\cdots a_{k+1})\right) +\cdots\\
	&+\sum\limits_{3\leq j_{k-1}<\cdots<j_2<j_1\leq {k+1}}  e(ia_{j_1}a_{j_2}a_{j_3} \cdots a_{j_{k-1}} a_1a_2 \cdots\hat{a_{j_{k-1}}}\cdots\hat{a_{j_2}}\cdots \hat{a_{j_1}}\cdots a_{k+1})\\
	&+e(ia_{j_1}a_{j_2}a_{j_3} \cdots a_{j_{k-1}} a_2a_1 \cdots\hat{a_{j_{k-1}}}\cdots\hat{a_{j_2}}\cdots \hat{a_{j_1}}\cdots a_{k+1})\\ 
	&+\left( e(ia_{k+1}a_k \cdots a_1a_2)+e(ia_{k+1}a_k \cdots a_2a_1)\right)  \\ 
	& = \left(  e(ia_1a_2a_3\cdots a_{k+1})+e(ia_2a_1a_3\cdots a_{k+1})
	+ \sum\limits_{3 \leq j \leq k+1}  e(ia_ja_1a_2a_3 \cdots \hat{a_j}\cdots a_{k+1}) \right) \\
	&+ \left(\sum\limits_{3 \leq j \leq k+1}  e(ia_ja_2a_1a_3 \cdots \hat{a_j}\cdots a_{k+1})+\sum\limits_{3 \leq j_2 <j_1 \leq k+1}  e(ia_{j_1}a_{j_2}a_1a_2a_3 \cdots \hat{a_{j_2}} \cdots \hat{a_{j_1}}\cdots a_{k+1})\right) 
		\end{align*} 
	\begin{align*}
	&+( \sum\limits_{3 \leq j_2 <j_1 \leq k+1} e(ia_{j_1}a_{j_2}a_2a_1a_3 \cdots \hat{a_{j_2}} \cdots \hat{a_{j_1}}\cdots a_{k+1})+ \\
	&\sum\limits_{3 \leq j_3< j_2 <j_1 \leq k+1}e(ia_{j_1}a_{j_2}a_{j_3}a_2a_1a_3 \cdots \hat{a_{j_3}} \cdots \hat{a_{j_2}} \cdots \hat{a_{j_1}}\cdots a_{k+1}) ) 
	+ \cdots+ \\
	&+\left(\sum\limits_{3\leq j_{k-1}<\cdots<j_2<j_1\leq {k+1}}e(ia_{j_1}a_{j_2}a_{j_3} \cdots a_{j_{k-1}} a_2a_1 \cdots\hat{a_{j_{k-1}}}\cdots\hat{a_{j_2}}\cdots \hat{a_{j_1}}\cdots a_{k+1})+e(ia_{k+1}a_k \cdots a_1a_2)\right)\\
	&+e(ia_{k+1}a_k \cdots a_2a_1)	\end{align*}
	\begin{align*}
	= &\sum\limits_{j=1}^{k+1}e(ia_ja_1a_2\cdots \hat{a_j}\cdots a_{k+1})+ \sum\limits_{1<j_2<j_1\leq k+1} e(ia_{j_1}a_{j_2}a_1a_2 \cdots \hat{a_{j_2}}\cdots \hat{a_{j_1}}\cdots a_{k+1})\\
	&+\sum\limits_{1<j_3<j_2<j_1\leq k+1} e(ia_{j_1}a_{j_2}a_{j_3}a_1a_2 \cdots \hat{a_{j_3}}\cdots \hat{a_{j_2}}\cdots \hat{a_{j_1}}\cdots a_{k+1})
	+ \cdots +\\
	&+\sum\limits_{1< j_{k-1}<\cdots<j_2<j_1\leq {k+1}} e(ia_{j_1}a_{j_2}a_{j_3} \cdots a_{j_{k-1}} a_1a_2 \cdots\hat{a_{j_{k-1}}}\cdots\hat{a_{j_2}}\cdots \hat{a_{j_1}}\cdots a_{k+1})\\
	&+e(ia_{k+1}a_k \cdots a_2a_1)
	\end{align*} Thus the result is true for p=k+1. This proves our claim. From this claim proof of the lemma follows from the following steps.
	\begin{align*}
	e(\bold w')= &e(ia_1a_2\cdots a_p)+\sum\limits_{j_1=2}^{p} e(\bold w_{j_1})+  \sum\limits_{1<j_2<j_1\leq p} e(\bold w_{j_1j_2})+ \sum\limits_{1<j_3<j_2<j_1\leq p} e(\bold w_{j_1j_2j_3})+ \cdots +\\ 
	&+e(\bold w_{p(p-1)\cdots 2 \cdot 1})
	\end{align*}
	
	where $\bold w_{j_1j_2\cdots j_l}= (ia_{j_1}a_{j_2}\cdots a_{j_l}a_1a_2 \cdots \hat{a_{j_l}}\cdots\hat{a_{j_{l-1}}}\cdots \hat{a_{j_1}}\cdots a_p).$ We observe that all the words $\bold w_{j_1,\dots j_m}$ have the same weight and belongs to $\chi_i$. This is because if some $a_{j_p} $ commutes with $i,a_{j_1},a_{j_2},\cdots,a_{j_{p-1}}$ then $e(\bold w_{j_1j_2\cdots j_l})=0$ . In our example, $e(\bold w')= e(\bold{w'_4})+e(\bold{w'_{42}})+e(\bold{w'_{43}})+e(\bold{w'_{432}}) \,\, \text{ where } \bold w'=a_1a_2a_3a_4=4563$. Now, By the linearity property of the brackets we have 
	
	$e(\bold w' \cdot a_{p+2})= \sum\limits_{j_1=1}^{p} e(\bold w_{j_1}\cdot a_{p+2})+ \sum\limits_{1<j_2<j_1\leq p} e(\bold w_{j_1j_2} \cdot a_{p+2})+ \sum\limits_{1<j_3<j_2<j_1\leq p} e(\bold w_{j_1j_2j_3} \cdot a_{p+2})+
	+ \cdots  +e(\bold w_{p(p-1)\cdots 2.1} \cdot a_{p+2})$.

	Similarly, we can add all the remaining alphabets $a_{p+3},\dots,a_r$ to the above expression. This will give us 
	
	$$e(\bold w)= \sum\limits_{\substack{\bold u \in \mathcal X_i^{*} \\ \rm{wt}(\bold u)=\eta(\bold k)}}\alpha(\bold u) e(\bold u) \text{ for some scalars }\alpha(\bold u).$$

	In our example,\begin{align*}
	e(\bold {w'}.5) &= [[[[4,5],6],3],5] = [e(\bold {w'}), 5]\\
	&=[e(3456)-e(3546)+e(3645)-e(3654), 5]\\
	&=[e(3456),5]-[e(3546),5]+[e(3645),5]-[e(3654),5]\\
	&=e(34565)-e(35465)+e(36455)-e(36545)
	\end{align*}
	Thus \begin{align*}
	e(\bold w) &= [e(45635),3]=[e(\bold {w'}.5),3]\\
	&=[e(34565)-e(35465)+e(36455)-e(36545), 3]\\
	&=[e(34565),3]-[e(35465),3]+[e(36455),3]- [e(36545),3]\\
	&=e(345653)- e(354653)+ e(364553)-e(365453)\\	
	\end{align*} This completes the proof.
\end{pf}
\begin{lem}\label{s1}
	If $ \bold u \ne \bold{v}  \in \chi_i^*$ are Lyndon words then exactly one element of the set $ \{\bold u \bold{v}, \bold{v} \bold u \} $ is Lyndon.
\end{lem}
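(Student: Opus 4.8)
The plan is to reduce the claim to the classical theory of Lyndon words over a totally ordered alphabet, applied to the free monoid $\mathcal X_i^*$ on the totally ordered set $\mathcal X_i$. Since the order on $\mathcal X_i^*$ is total and $\bold u \ne \bold v$, I may assume without loss of generality that $\bold u < \bold v$; the set $\{\bold u \bold v,\ \bold v \bold u\}$ is symmetric in $\bold u$ and $\bold v$, so it suffices to show that under $\bold u < \bold v$ the word $\bold u \bold v$ is Lyndon while $\bold v \bold u$ is not. That $\bold u \bold v$ is Lyndon is immediate from the second defining property of Lyndon words recalled just above the statement: a concatenation $\bold u \bold v$ of two Lyndon words with $\bold u < \bold v$ is again Lyndon (see also \cite{Re93}).

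For the word $\bold v \bold u$ I would use the first (cyclic-rotation) characterisation of Lyndon words. First I observe that $\bold u \bold v \ne \bold v \bold u$: if $\bold u \bold v = \bold v \bold u$ then $\bold u$ and $\bold v$ are powers of a common word (two words commute if and only if they are powers of a common word), but a Lyndon word is primitive, so this forces $\bold u = \bold v$, contradicting $\bold u \ne \bold v$. Consequently $\bold u \bold v$ is obtained from $\bold v \bold u$ by the cyclic rotation moving the prefix $\bold v$ to the end, and this is a \emph{proper} cyclic rotation because $\bold u$ and $\bold v$ are nonempty and $\bold u \bold v \ne \bold v \bold u$. Since $\bold u \bold v$ is Lyndon, it is strictly smaller than each of its proper cyclic rotations, in particular $\bold u \bold v < \bold v \bold u$. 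Hence $\bold v \bold u$ admits the proper cyclic rotation $\bold u \bold v$ that is strictly smaller than itself, so $\bold v \bold u$ fails the Lyndon condition. Combining the two parts, exactly one of $\bold u \bold v,\ \bold v \bold u$ is Lyndon.

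The only genuinely non-formal point, and hence the main obstacle, is the separation $\bold u \bold v \ne \bold v \bold u$: without it one cannot guarantee that $\bold u \bold v$ is a \emph{proper} rotation of $\bold v \bold u$, and indeed the ``exactly one'' conclusion fails precisely in the degenerate case $\bold u = \bold v$ (where the two products coincide), which is why the hypothesis $\bold u \ne \bold v$ is essential. This is settled cleanly by primitivity of Lyndon words. Alternatively, one can bypass the cyclic-rotation characterisation and prove $\bold u \bold v < \bold v \bold u$ directly by a short case analysis according to whether $\bold u$ is a prefix of $\bold v$, using in the prefix case that $\bold v < \bold s$ for every proper suffix $\bold s$ of the Lyndon word $\bold v$; this elementary route replaces the appeal to the commuting-words theorem and is the form I would likely include for self-containedness.
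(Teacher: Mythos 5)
Your proof is correct, and its positive half is exactly the paper's argument: the paper's entire proof of Lemma~\ref{s1} is the one-line observation that if $\bold u<\bold v$ then $\bold u\bold v$ is Lyndon (and otherwise $\bold v\bold u$ is), invoking the same recursive characterisation of Lyndon words you cite. Where you genuinely go beyond the paper is the \emph{exclusivity} half: the paper never argues that the other product fails to be Lyndon, so strictly speaking it only establishes ``at least one,'' not ``exactly one.'' Your rotation argument closes this gap cleanly --- since $\bold v\bold u$ is a nontrivial cyclic rotation of the Lyndon word $\bold u\bold v$, one gets $\bold u\bold v<\bold v\bold u$ strictly, so $\bold v\bold u$ admits a strictly smaller proper rotation and cannot be Lyndon. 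One small remark: your separate primitivity step showing $\bold u\bold v\neq\bold v\bold u$ is actually redundant, because the strict inequality $\bold u\bold v<\bold v\bold u$ already follows from Lyndonness of $\bold u\bold v$ applied to the nontrivial rotation by $|\bold u|$ letters (a word equal to one of its nontrivial rotations is never Lyndon); still, it is harmless, and it does make explicit why the hypothesis $\bold u\neq\bold v$ is essential, a point the paper leaves silent. Net effect: same route as the paper for existence, plus a short, correct completion of the uniqueness claim that the paper's proof omits.
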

\begin{pf}
	We observe that if $\bold u< \bold v$ then $\bold u \bold v$ is Lyndon otherwise $\bold v \bold u$ is Lyndon. 
\end{pf}
\begin{lem}\label{s2}
	If $ \bold w,\bold{\tilde{w}}  \in \mathcal X_i^*$ are Lyndon words with standard factorization $\bold w= \bold{u_1u_2}, \, \bold{\tilde{w}}= \bold{v_1v_2} $. Assume that $\bold w \bold{\tilde{w}}$ is a Lyndon word. Then
	$$ [L(\bold w), L(\bold{\tilde{w}})]\in \rm{span}\{ L(C^i(\rm{wt}(\bold w \tilde{\bold w})),G)\}$$
	
\end{lem}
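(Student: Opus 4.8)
The plan is to read the statement inside the free Lie superalgebra $FLS(\mathcal X_i)$, where it becomes a closure-under-bracket assertion phrased in the super Lyndon basis. First I would record that $FLS(\mathcal X_i)$ carries a $\mathbb Z_+[I]$-grading: each generator $\bold x\in\mathcal X_i\subseteq M(I,G)$ has a well-defined weight $\wt(\bold x)\in\mathbb Z_+[I]$, the bracket is weight-additive, and hence $FLS(\mathcal X_i)=\bigoplus_{\bold m}FLS_{\bold m}(\mathcal X_i)$. Since $\bold w$ and $\bold{\tilde w}$ are Lyndon (hence super Lyndon) words, $L(\bold w)$ and $L(\bold{\tilde w})$ are homogeneous elements of $FLS(\mathcal X_i)$ of weights $\wt(\bold w)$ and $\wt(\bold{\tilde w})$; therefore $[L(\bold w),L(\bold{\tilde w})]$ is homogeneous of weight $\wt(\bold w)+\wt(\bold{\tilde w})=\wt(\bold w\bold{\tilde w})$.

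Next I would invoke Proposition \ref{reuts}: the set $\{L(\bold z):\bold z\in\mathcal X_i^{*}\text{ super Lyndon}\}$ is a basis of $FLS(\mathcal X_i)$, and each basis vector $L(\bold z)$ is itself homogeneous of weight $\wt(\bold z)$. Consequently the basis vectors of a fixed weight $\bold m$ form a basis of the homogeneous component $FLS_{\bold m}(\mathcal X_i)$; for $\bold m=\wt(\bold w\bold{\tilde w})$ this set is precisely $L(C^i(\wt(\bold w\bold{\tilde w}),G))$ by the definition \eqref{cikg}. Expanding the homogeneous element $[L(\bold w),L(\bold{\tilde w})]$ in this basis then shows that it lies in $\mathrm{span}\{L(C^i(\wt(\bold w\bold{\tilde w}),G))\}$, which is the claim. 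I note that this argument uses neither the standard-factorization data nor the Lyndonicity of $\bold w\bold{\tilde w}$; those hypotheses are only needed if one wants the sharper \emph{triangular} form of the expansion.

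If a constructive expansion is preferred --- consistent with the super-Jacobi approach announced for this section, and convenient for the spanning arguments feeding into Lemmas \ref{helplem1} and \ref{helplem2} --- I would instead argue by induction on the common weight. The base case is when $(\bold w,\bold{\tilde w})$ is the standard factorization of the Lyndon word $\bold w\bold{\tilde w}$, in which case $[L(\bold w),L(\bold{\tilde w})]=L(\bold w\bold{\tilde w})$ by the very definition of $L$. Otherwise I would write the standard factorization $\bold{\tilde w}=\bold v_1\bold v_2$ and apply the super-Jacobi identity $[L(\bold w),[L(\bold v_1),L(\bold v_2)]]=[[L(\bold w),L(\bold v_1)],L(\bold v_2)]\pm[L(\bold v_1),[L(\bold w),L(\bold v_2)]]$, reorder the inner brackets into Lyndon ones using Lemma \ref{s1}, and feed the shorter sub-brackets into the induction hypothesis, finally collecting the terms of weight $\wt(\bold w\bold{\tilde w})$.

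The main obstacle, should one take the constructive route, is the sign bookkeeping: every application of the super-Jacobi identity and every reordering via Lemma \ref{s1} introduces a factor $(-1)^{|\cdot||\cdot|}$ determined by the $\mathbb Z_2$-degrees of the sub-words, and one must also accommodate the genuinely super basis vectors of the form $\bold z=\bold u\bold u$ with $\bold u\in\mathcal X_{i,1}^{*}$ Lyndon, for which $[L(\bold u),L(\bold u)]=2L(\bold z)$ behaves differently from the purely even case. Keeping these signs consistent while verifying that the induction strictly decreases in the chosen order is where the care lies; since the abstract grading argument of the first two paragraphs sidesteps this entirely, I would present that as the proof and relegate the explicit triangular expansion to a remark if it is needed elsewhere.
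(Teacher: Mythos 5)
Your proposal is correct, and your primary (grading) argument is a genuinely different and shorter route than the paper's. The paper proves Lemma \ref{s2} constructively: it analyses the standard factorization of the concatenation (whether $\bold{u_2}\geq\bold{\tilde w}$ or $\bold{u_2}<\bold{\tilde w}$, with further subcases according to the standard factorizations of $\bold{u_1}\bold{\tilde w}$ and $\bold{u_2}\bold{\tilde w}$), repeatedly applies the Jacobi identity, and uses the definition $L(\bold w\bold{\tilde w})=[L(\bold w),L(\bold{\tilde w})]$ whenever $(\bold w,\bold{\tilde w})$ happens to be the standard factorization, recursing until every term is an $L$ of a (super) Lyndon word of the correct weight. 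Your first argument instead notes that the statement is purely a claim about a weight-homogeneous component of $FLS(\mathcal{X}_i)$: by Proposition \ref{reuts} the super Lyndon monomials form a homogeneous basis, those of weight $\wt(\bold w\bold{\tilde w})$ are exactly $L(C^i(\wt(\bold w\bold{\tilde w}),G))$ by \eqref{cikg}, and the bracket of the two homogeneous elements lies in that component. This is valid and non-circular, since Proposition \ref{reuts} is an external result quoted from the literature and does not depend on Lemmas \ref{s1}--\ref{s4}; and, as you correctly observe, it makes the hypotheses on the standard factorizations and on $\bold w\bold{\tilde w}$ being Lyndon superfluous for the statement as literally phrased. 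Since the downstream applications (Lemma \ref{s4} and Lemma \ref{spanlem}) invoke only span membership, your shortcut is logically sufficient for everything this lemma feeds into. What the paper's constructive route buys is an explicit recursive expansion parallel to the heap-side Proposition \ref{shush7}, where a genuine triangularity refinement (terms indexed by $N<M$) is asserted and actually needed; your grading argument cannot produce that sharper form, which your closing remark rightly flags. Your sketched inductive alternative is essentially the paper's proof in outline, so no gap there either, though the paper carries out the case analysis in full where you only indicate it.
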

\begin{pf}
	We have two possibles situations: either $\bold{u_2} \geq \bold{\tilde{w}}$ or $\bold{u_2} < \bold{\tilde{w}}.$ 
	
	If $\bold{u_2} \geq \bold{\tilde{w}}$ then $\bold{u_1u_2}|\bold{v_1v_2}$ is the standard factorization of $\bold w \bold{\tilde{w}}$.  Observe that $\bold{u_1u_{21}}|\bold{u_{22}v_1v_2}$ can't be standard  factorization for some standard factorization $\bold{u_2}=\bold{u_{21}}\bold{u_{22}}$ as $\bold{u_{22}}<\bold{\tilde{w}}\leq \bold{u_2}=\bold{u_{21}}\bold{u_{22}}$ means $\bold{u_{22}}< \bold{u_{21}}$ i.e. $\bold{u_2}=\bold{u_{21}}\bold{u_{22}}$ can't be a Lyndon word. From this observation, the proof is immediate in this case: We have $\bold w \bold{\tilde{w}} =\bold{u_1u_2}|\bold{v_1v_2}$ is the standard factorization
	and $[L(\bold w), L(\bold{\tilde{w}})]=L(\bold w \bold{\tilde{w}}) \in \rm{span}\{ L(C^i(\rm{wt}(\bold{w \tilde w})),G)\}.$
	
	If $\bold{u_2}<\bold{\tilde{w}}$ then $\bold{u_1}|\bold{u_2v_1v_2}$  is the standard factorization of $\bold{w\tilde{w}}$. Observe that $\bold{u_{11}}|\bold{u_{12}u_2v_1v_2}$ can't be the standard factorization for some standard factorization of $\bold{u_1}=\bold{u_{11}}\}bold{u_{12}}$ as $\bold{u_{12}}<\bold{u_2}$ means $\bold{u_{12}}\bold{u_2} $ is the longest right factor of $\bold w=\bold{u_{11}}\bold{u_{12}}\bold{u_2}$ which contradicts the statement:  $\bold w= \bold{u_1u_2}$ is the standard factorization.

	If $\bold w \bold{\tilde{w}} =\bold{u_1}|\bold{u_2v_1v_2}$ is standard factorization
	then \begin{align*}
	[L(\bold w), L(\bold{\tilde{w}})]&=
	[L(\bold{u_1u_2}),L(\bold{\tilde{w}})]\\
	&=[[L(\bold{u_1}),L(\bold{u_2})],L(\bold{\tilde{w}})]\\
	&=[L(\bold{u_1}),[L(\bold{u_2}),L(\bold{\tilde{w}})]+[[L(\bold{u_1}),L(\bold{\tilde{w}})],L(\bold{u_2})]
	\end{align*} 
	
	\textbf{subcase(i)}:- If $ \bold{u_2} \bold{\tilde{w}} $ is Lyndon word with standard factorization $\bold{u_2}| \bold{\tilde{w}} $ and $\bold{u_1} \bold{\tilde{w}} $ is a Lyndon word with standard factorization $\bold{u_1}| \bold{\tilde{w}} $ then
	\begin{align*}
	[L(\bold w), L(\bold{\tilde{w}})]&=[L(\bold{u_1}),L(\bold{u_2}\bold{\tilde{w}})]+[L(\bold{u_1}\bold{\tilde{w}}),L(\bold{u_2})]\\
	&=[L(\bold{u_1}),L(\bold{u_2}\bold{\tilde{w}})]+ L(\bold{u_1}\bold{\tilde{w}}\bold{u_2})\\
	&\text{as } \bold{u_2} <\bold{\tilde{w}} \text{ so } \bold{u_1}\bold{\tilde{w}}|\bold{u_2} \text{ is standard factorization}.
	\end{align*}
	Now repeat the above procedure again on $[L(\bold{u_1}),L(\bold{u_2}\bold{\tilde{w}})]$.
	Continue this procedure on the subsequent terms till we get the term like $[L(\bold{v_1}),L(\bold{v_2})]$ where $\bold{v_1}, \bold{v_2}$ are Lyndon words with $\bold{v_1} \in \chi_i.$ This is possible since $wt(\bold{u_1}) < wt(\bold w)$.	
	
	\textbf{subcase(ii)}:- If $ \bold{u_2} \bold{\tilde{w}} $ is Lyndon word with standard factorization $ \bold{u_{21}}| \bold{u_{22}} \bold{\tilde{w}} $  for standard factorization $ \bold{u_2}= \bold{u_{21}} \bold{u_{22}} $ and $\bold{u_1} \bold{\tilde{w}} $ is Lyndon word with standard factorization $ \bold{u_1}| \bold{\tilde{w}} $ then
	\begin{align*}
	&=[L(\bold{u_1}),[[L(\bold{u_{21}}), L(\bold{u_{22}})],L(\bold{\tilde{w}})]]+[L(\bold{u_1} \bold{\tilde{w}}),L(\bold{u_2})] \\
	&=[L(\bold{u_1}),[[L(\bold{u_{21}}),L(\bold{\tilde{w}})], L(\bold{u_{22}})]]+[L(\bold{u_1}),[L(\bold{u_{21}}),[L(\bold{u_{22}}), L(\bold{\tilde{w}})]]]+L(\bold{u_1} \bold{\tilde{w}}\bold{u_2})
	\end{align*}
	Repeat the above procedure firstly for $[L(\bold{u_{21}}),L(\bold{\tilde{w}})], [L(\bold{u_{22}}), L(\bold{\tilde{w}})] $, then using this in the above equation and repeat the procedure for subsequent terms and so on. This process will end when we got terms like $[L(\bold{v_1}),L(\bold{v_2})]$ where $\bold{v_1}, \bold{v_2}$ are Lyndon words with $\bold{v_1} \in \chi_i.$ 
	
	\textbf{subcase(iii)}:- If $ \bold{u_2} \bold{\tilde{w}} $ is Lyndon word with standard factorization $ \bold{u_{21}}|\bold{u_{22}} \bold{\tilde{w}} $  for the standard factorization $\bold{u_2}= \bold{u_{21}} \bold{u_{22}} $ and $\bold{u_1} \bold{\tilde{w}} $ is Lyndon word with standard factorization $\bold{u_{11}}| \bold{u_{12}} \bold{\tilde{w}} $  for standard factorization $ \bold{u_1}= \bold{u_{11}} \bold{u_{12}} $ then
	\begin{align*}
	& =[L(\bold{u_1}),[[L(\bold{u_{21}}), L(\bold{u_{22}})],L(\bold{\tilde{w}})]]+[[[L(\bold{u_{11}}),L(\bold{u_{12}})],L(\bold{\tilde{w}})],L(\bold{u_2})] \\
	&=[L(\bold{u_1}),[[L(\bold{u_{21}}),L(\bold{\tilde{w}})], L(\bold{u_{22}})]]+[L(\bold{u_1}),[L(\bold{u_{21}}),[L(\bold{u_{22}}), L(\bold{\tilde{w}})]]]\\
	& +[[L(\bold{u_{11}}),[L(\bold{u_{12}}),L(\bold{\tilde{w}})]], L(\bold{u_{2}})]+[[[L(\bold{u_{11}}), L(\bold{\tilde{w}})],L(\bold{u_{12}})]
	,L(\bold{u_1})] 
	\end{align*} Repeat the above procedure firstly for $[L(\bold{u_{12}}),L(\bold{\tilde{w}})],[L(\bold{u_{11}}), L(\bold{\tilde{w}})] $, then using this in the above equation and repeat the procedure for subsequent terms and so on. This process will end when we got terms like $[L(\bold{v_1}),L(\bold{v_2})]$ where $\bold{v_1}, \bold{v_2}$ are super Lyndon words with $\bold{v_1} \in \chi_i.$ \end{pf}

The following example explains the above lemma.
\begin{example} Consider the root space $\lie g_{\eta(\bold k)}$ where $\eta(\bold k)= 2\alpha_3+\alpha_4+2 \alpha_5+\alpha_6 $ from Example \ref{basis1_ex2}. Fix $i=3$ .
	Let $\bold w=334345, \bold w'= 34635364 \in \chi_3^\ast $ then $\bold w=3|34345=\bold{u_1}\bold{u_2}, \bold w'= 346|35364=v_1v_2$ are the standard factorizations . Thus $L(\bold w)=[L(\bold{u_1}),L(\bold{u_2})]=[3,[34,345]]$ and $L(\bold w')=[L(\bold{v_1}),L(\bold{v_2})]=[346,[35,364]]$.
	Since $\bold w \bold w'
	=\underbrace{334345}_{u
		_1u_2}\underbrace{34635364}_{\bold w'}
	=\underbrace{3}_{\bold{u_1}}|\underbrace{3434534635364}_{\bold{u_2}\bold w'}$ is the standard factorization, we have
	
$[L(\bold w),L(\bold w')]$
	\begin{align*}
 &=[\underbrace{[L(3),L(34345)]}_{[L(\bold{u_1}),L(\bold{u_2})]},\underbrace{L(34635364)}_{L(\bold w')}]\\
	&=\underbrace{[L(3),[L(34345),L(34635364)]]}_{[L(\bold{u_1}),[L(\bold{u_2}),L(\bold w')]]}+\underbrace{[[L(3),L(34635364)],L(34345)]}_ {[[L(\bold{u_1}),L(\bold w')],L(\bold{u_2})]}\\
	&=\underbrace{[L(3),[[L(34),L(345)],L(34635364)]]}_{[L(\bold{u_1}),[[L(\bold{u_{21}}),L(\bold{u_{22}})],L(\bold w')]]}+\underbrace{[L(334635364),L(34345)]}_ {[L(\bold{u_1}\bold w'),L(\bold{u_2})]}\\
	&=[L(3),\underbrace{[[L(34),L(34635364)],L(345)]+[L(34),[L(345),L(34635364)]]}_{[[L(\bold{u_{21}}),L(\bold w')],L(\bold{u_{22}})]+[L(\bold{u_{21}}),[L(\bold{u_{22}}),L(\bold w')]]}]+\underbrace{L(33463536434345)}_ {L(\bold{u_1}\bold w'\bold{u_2})}
	\end{align*}	
		\begin{align*}
	&=[L(3),\underbrace{[L(3434635364),L(345)]+[L(34),L(34534635364)]}_{[L(\bold{u_{21}}\bold w'),L(\bold{u_{22}})]+[L(\bold{u_{21}}\bold{u_{22}}),L(\bold w')]}]+\underbrace{L(33463536434345)}_ {L(\bold{u_1}\bold w'\bold{u_2})}\\
	&=[L(3),\underbrace{L(3434635364345)+L(3434534635364)}_{L(\bold{u_{21}}\bold w'\bold{u_{22}})+L(\bold{u_{21}}\bold{u_{22}})\bold w')}]+\underbrace{L(33463536434345)}_ {L(\bold{u_1}\bold w'\bold{u_2})}\\
	&=\underbrace{L(33434635364345)}_{L(\bold{u_1}\bold{u_{21}}\bold w'\bold{u_{22}})}+\underbrace{L(33434534635364)}_{L(\bold{u_1}\bold{u_{21}}\bold{u_{22}}\bold w')}+\underbrace{L(33463536434345)}_ {L(\bold{u_1}\bold w'\bold{u_2})}\\
	\end{align*}
	\end{example}

\begin{example}Let $I=\{ 1,2,3,4,5,6\},  \, {\Psi=\{3,5\}, I_1=\{3,5\}}, I_0=\{1,2,4,6\}, I^{re} =\{1,4\}, \eta(\bold k)= 2\alpha_3+\alpha_4+2 \alpha_5+\alpha_6 $. Fix $i=3$ .
	Let $\bold w=334365, \bold w'= 34635364 \in \chi_3^\ast $ then $\bold w=3|34365=\bold{u_1}\bold{u_2}, \bold w'= 346|35364=\bold{v_1}\bold{v_2}$ be standard factorization of these words.
	Since $\bold w \bold w'
	=\underbrace{334365}_{\bold{u_1}\bold{u_2}}\underbrace{34635364}_{\bold w'}
	=\underbrace{3}_{\bold{u_1}}|\underbrace{3436534635364}_{\bold{u_2}\bold w'}$
	is the standard factorization,
	\begin{align*}
	[L(\bold w),L(\bold w')] &=[\underbrace{[L(3),L(34365)]}_{[L(\bold{u_1}),L(\bold{u_2})]},\underbrace{L(34635364)}_{L(\bold w')}]\\
	&=\underbrace{[L(3),[L(34365),L(34635364)]]}_{[L(\bold{u_1}),[L(\bold{u_2}),L(\bold w')]]}+\underbrace{[[L(3),L(34635364)],L(34345)]}_ {[[L(\bold{u_1}),L(\bold w')],L(\bold{u_2})]}\\
	&=\underbrace{[L(3),L(3436534635364)]}_{[L(\bold{u_1}),L(\bold{u_2}\bold w')]}+\underbrace{[L(334635364),L(34345)]}_ {[L(\bold{u_1}\bold w'),L(\bold{u_2})]}\\
	&=\underbrace{L(33436534635364)}_{L(\bold{u_1}\bold{u_2}\bold w')}+\underbrace{L(33463536434345)}_ {L(\bold{u_1}\bold w'\bold{u_2})}
	\end{align*}
\end{example}

\begin{lem}\label{s4} If $\bold{ w_a}$ and $\bold{w_b}$ are super Lyndon words
	then $[L(\bold {w_a}),L(\bold{w_b})] \in \rm{span}\{ L(C^i(\rm{wt}(\bold{w_a}\bold{w_b})),G)\}.$
\end{lem}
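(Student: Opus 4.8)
The plan is to prove Lemma \ref{s4} by a case analysis on the types of the super Lyndon words $\bold w_a$ and $\bold w_b$, mirroring exactly the three cases in the proof of Proposition \ref{shush7}, and to close the argument by induction on the total height $\htt(\wt(\bold w_a)) + \htt(\wt(\bold w_b))$. Recall that each super Lyndon word in $\mathcal X_i^*$ is either a Lyndon word or of the form $\bold u \bold u$ with $\bold u \in \mathcal X_{i,1}^*$ an odd Lyndon word. Since the map $L$ and all brackets preserve the $Q_+$-grading, every term produced in the computation will automatically have weight $\wt(\bold w_a \bold w_b) = \wt(\bold w_a) + \wt(\bold w_b)$; thus the only thing to verify at each step is membership in $\rm{span}\{L(C^i(\wt(\bold w_a \bold w_b)),G))\}$, and the combinatorial content is carried entirely by Lemmas \ref{s1} and \ref{s2}.

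First I would dispose of the Lyndon--Lyndon case. If both $\bold w_a$ and $\bold w_b$ are Lyndon and $\bold w_a = \bold w_b$, then super skew-symmetry gives $[L(\bold w_a),L(\bold w_a)] = 0$ when $\bold w_a$ is even, while if $\bold w_a$ is odd then $\bold w_a \bold w_a$ is super Lyndon and $[L(\bold w_a),L(\bold w_a)] = L(\bold w_a \bold w_a)$ directly by the definition of $L$ on super Lyndon words. If $\bold w_a \neq \bold w_b$, then by Lemma \ref{s1} exactly one of $\bold w_a \bold w_b$ and $\bold w_b \bold w_a$ is Lyndon; using the super skew-symmetry relation (with sign $-(-1)^{\deg \bold w_a \, \deg \bold w_b}$, where $\deg$ is the $\mathbb Z_2$-degree) I may arrange the Lyndon concatenation to appear first, and then Lemma \ref{s2} yields the conclusion verbatim.

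Next I would treat the cases in which at least one of $\bold w_a,\bold w_b$ is a genuine super Lyndon word $\bold u \bold u$. Writing $L(\bold u \bold u) = [L(\bold u),L(\bold u)]$ and applying the super-Jacobi identity with the odd element $L(\bold u)$ — precisely the reduction $[[L(\bold u),L(\bold u)],z] = 2[L(\bold u),[L(\bold u),z]]$ used in Cases (ii) and (iii) of Proposition \ref{shush7} — I reduce the outer bracket to a scalar multiple of $[L(\bold u),[L(\bold u),L(\bold w_b)]]$ (and symmetrically when $\bold w_b$ is the doubled word, or when both are). The inner bracket $[L(\bold u),L(\bold w_b)]$ is a bracket of super Lyndon words of strictly smaller total height, so by the inductive hypothesis it expands as a span of $L(N)$ over super Lyndon words $N$ of weight $\wt(\bold u \bold w_b)$; each resulting term $[L(\bold u),L(N)]$ is then again a bracket of super Lyndon words, of strictly smaller total height than the original, so the induction closes.

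The main obstacle will be setting up the well-founded induction so that the repeated super-Jacobi expansions and the Lyndon straightening of Lemma \ref{s2} genuinely terminate; the cleanest device is induction on $\htt(\wt(\bold w_a)) + \htt(\wt(\bold w_b))$ together with the observation that every reduction replaces a factor (namely $\bold u$ in $\bold u \bold u$, or $\bold u_1$ in a standard factorization $\bold w = \bold u_1 \bold u_2$) by one of strictly smaller height. The super-signs $(-1)^{\deg \bold w_a \, \deg \bold w_b}$ arising from super skew-symmetry and from the super-Jacobi identity demand careful bookkeeping, but they only modify scalar coefficients and never affect membership in $\rm{span}\{L(C^i(\wt(\bold w_a \bold w_b)),G))\}$, so the essential combinatorics remains that of the Lyndon case already handled in Lemmas \ref{s1} and \ref{s2}.
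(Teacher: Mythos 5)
Your plan follows the paper's route almost exactly: the same case split according to whether each of $\bold w_a$, $\bold w_b$ is Lyndon or a doubled word $\bold u\bold u$, the same super-Jacobi reduction $[[L(\bold u),L(\bold u)],z]=2[L(\bold u),[L(\bold u),z]]$, and the same delegation of the Lyndon--Lyndon case to Lemmas \ref{s1} and \ref{s2}. The difference is that you wrap the doubled cases in an induction on $\htt(\wt(\bold w_a))+\htt(\wt(\bold w_b))$, and there your argument has a genuine gap: the claimed strict decrease at the outer bracket is false. If $\bold w_a=\bold u\bold u$ and the inner bracket expands by induction as $[L(\bold u),L(\bold w_b)]=\sum_N \alpha_N L(N)$, then every $N$ has weight $\wt(\bold u)+\wt(\bold w_b)$, so each outer term $[L(\bold u),L(N)]$ has total height $2\,\htt(\wt(\bold u))+\htt(\wt(\bold w_b))$ --- exactly the total height of the original bracket $[L(\bold u\bold u),L(\bold w_b)]$, not strictly smaller. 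As stated, the induction is therefore not well-founded. Your hinted repair (``every reduction replaces a factor by one of strictly smaller height'') does not automatically close it either: when some $N$ is itself a doubled word $\bold m\bold m$, the very same configuration --- a Lyndon word bracketed against a doubled word of complementary weight --- recurs at the same total height, and neither the number of doubled arguments nor the height of the smaller argument is forced to drop, so a genuinely decreasing secondary measure still has to be exhibited.

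This is precisely why the paper's proof of Lemma \ref{s4} uses no height induction at all. After the super-Jacobi step it closes each case in a bounded number of explicit moves: in case (i) it writes $2[L(\bold u),[L(\bold u),L(\bold w_b)]]=2[L(\bold u),L(\bold u\bold w_b)]=L(\bold u\bold u\bold w_b)$ using the identity $[L(\bold x),L(\bold y)]=L(\bold x\bold y)$ when $\bold x\bold y$ is super Lyndon with standard factorization $\bold x\mid\bold y$; in case (ii) either one Jacobi application plus this identity terminates (when $\bold u_2<\bold v$), or $\bold w_a\bold w_b$ is itself super Lyndon with standard factorization $\bold w_a\mid\bold w_b$ and the bracket is literally $L(\bold w_a\bold w_b)$; and the only genuinely recursive work is pushed into Lemma \ref{s2}, whose internal recursion descends on the weight of the left factor $\bold u_1$ of a standard factorization --- a quantity that does strictly decrease --- rather than on total weight. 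To repair your plan, replace the total-height induction by this finite case-by-case resolution (treating the terms $[L(\bold u),L(N)]$ with $N$ Lyndon directly via Lemmas \ref{s1} and \ref{s2}, and the terms with $N$ doubled via the explicit standard-factorization identities), or else construct a bona fide well-founded measure for the fixed-weight reductions; total height alone cannot carry the argument.
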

\begin{pf}
	Since $\bold{ w_a}$ and $\bold{w_b}$ are super Lyndon words, we have the following cases:-
	\begin{enumerate}
		\item[(i)] $\bold{w_a}=\bold u \bold u$, $\bold{w_b}=\bold {v_1} \bold {v_2}$
		\item[(ii)] $\bold{w_a}=\bold {u_1} \bold {u_2}$, $\bold{w_b}=\bold {v} \bold {v}$
		\item[(iii)] $\bold{w_a}=\bold u \bold u$, $\bold{w_b}=\bold {v} \bold {v}$
		\item[(iv)] $\bold{w_a}=\bold {u_1} \bold {u_2}$, $\bold{w_b}=\bold {v_1} \bold {v_2}$
		where $ \bold {u_1} \neq \bold{u_2}, \, \bold {v_1} \neq \bold{v_2}$
	\end{enumerate}
	Case(i):- Since $\bold{w_a}=\bold u \bold u$, $\bold{w_b}=\bold {v_1} \bold {v_2}
	\Rightarrow \bold{w_a}\bold{w_b}= \bold u|\bold u \bold{v_1}\bold{v_2}$
	\begin{align*}
	[L(\bold{w_a}), L(\bold{w_b})] &= [[L(\bold u),L(\bold u)], L(\bold{w_b})]\\
	&=2 [L(\bold u),[L(\bold u), L(\bold{w_b})]]\\
	&= 2[L(\bold u), L(\bold u \bold{w_b})]\\
	&= L(\bold u \bold u \bold{w_b})
	\end{align*}
	Case(ii):-$\bold{w_a}=\bold {u_1} \bold {u_2}$, $\bold{w_b}=\bold {v} \bold {v}$.
	If $\bold {u_2} < \bold {v}$ then $\bold{w_a}\bold{w_b}= \bold {u_1}|\bold {u_2} \bold{v}\bold{v}$ is the standard factorization.
	\begin{align*}
	[L(\bold{w_a}), L(\bold{w_b})] &= [[L(\bold {u_1}),L(\bold {u_2})], L(\bold{w_b})]\\
	&= [[L(\bold {u_1}), L(\bold{w_b})], L(\bold {u_2})]+ [L(\bold {u_1}),[L(\bold {u_2}), L(\bold{w_b})]]\\
	&=[L(\bold {u_1}\bold{w_b}),L(\bold {u_2})]+ [L(\bold {u_1}),L(\bold {u_2}\bold{w_b})])]
	\end{align*}
	Otherwise  $\bold{w_a}\bold{w_b}= \bold {u_1}\bold {u_2}| \bold{v}\bold{v}$ is the standard factorization.
	$$\Rightarrow [L(\bold{w_a}), L(\bold{w_b})]=L(\bold{w_a}\bold{w_b})$$
	Case(iii):-$\bold{w_a}=\bold u \bold u$, $\bold{w_b}=\bold {v} \bold {v}$. Since
	$\bold u <\bold v,  \bold{w_a}\bold{w_b}= \bold u|\bold u \bold v \bold v$ is the standard factorization.
	\begin{align*}
	[L(\bold{w_a}), L(\bold{w_b})] &= [[L(\bold {u}),L(\bold {u})], L(\bold{w_b})]\\
	&=[L(\bold {u}),L(\bold {u}\bold{w_b})]
	\end{align*}
	Case(iv):- $\bold{w_a}=\bold {u_1} \bold {u_2}$, $\bold{w_b}=\bold {v_1} \bold {v_2}$.
	This case follows from Lemma \ref{s2}.
	\end{pf}

\begin{lem}\label{spanlem}
	The root space $\lie g_{\eta(\bold k)}$ is contained in the span $ \{e(L(C^i(\bold k, G))\}$.
	
\end{lem}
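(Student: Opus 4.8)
The plan is to feed the spanning result of Lemma \ref{s3} into the bracket-collapsing machinery of Lemmas \ref{s1}, \ref{s2} and \ref{s4}. By Lemma \ref{s3}, $\lie g_{\eta(\bold k)}$ is already spanned by the left normed words $e(\bold w)$ with $\bold w\in\mathcal X_i^{*}$ and $\wt(\bold w)=\eta(\bold k)$, so it is enough to show that each such $e(\bold w)$ lies in $\operatorname{span}\{e(L(\bold v)):\bold v\in C^{i}(\bold k,G)\}$. First I would fix the representative $\bold w=a_1a_2\cdots a_r$ with $a_1=i$, which is available because $\mathrm{IA}(\bold w)=\{i\}$, and record the super-letter factorisation $\bold w=x_1x_2\cdots x_m$ obtained by cutting $\bold w$ before each occurrence of $i$, so that each $x_j\in\mathcal X_i$.

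The main obstacle is the bridge from the alphabet $I$ to the alphabet $\mathcal X_i$: one must show that the left normed word $e(\bold w)$, which a priori involves the generators $e_{a_j}$ with $a_j\neq i$ (and these do not lie in the subsuperalgebra $\lie g^{i}=\langle e(x):x\in\mathcal X_i\rangle$), nevertheless belongs to $\lie g^{i}$. I would establish this by reading $e(\bold w)$ as an iterated adjoint action: since $a_1=i$,
\[
e(\bold w)=\pm\,\operatorname{ad}(a_r)\cdots\operatorname{ad}(a_2)(e_i),
\]
the sign being forced by the $\mathbb Z_2$-grading. Now $e_i=e(i)\in\lie g^{i}$, and every derivation $\operatorname{ad}(c)$ with $c\in I$ stabilises $\lie g^{i}$: on a super-letter $e(x)$ one has $\operatorname{ad}(c)(e(x))=\pm e(xc)$ with $xc\in\mathcal X_i$ when $c\neq i$, and $\operatorname{ad}(i)(e(x))=\pm[e(x),e_i]$ is a bracket of two super-letters; hence $\operatorname{ad}(c)$ maps the generators of $\lie g^{i}$ into $\lie g^{i}$, and being a derivation it preserves the whole subsuperalgebra. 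Applying $\operatorname{ad}(a_2),\dots,\operatorname{ad}(a_r)$ in turn to $e_i$ therefore never leaves $\lie g^{i}$, so $e(\bold w)\in\lie g^{i}_{\eta(\bold k)}$. The only genuine care needed here is the bookkeeping of the super-signs and the verification that appending a letter keeps a super-letter inside $\mathcal X_i$; conceptually this is exactly where the left normed words over $I$ get reorganised into the super-letter alphabet $\mathcal X_i$.

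It then remains to rewrite $\lie g^{i}_{\eta(\bold k)}$ in the desired form. Any element of $\lie g^{i}_{\eta(\bold k)}$ is a combination of iterated brackets of super-letters $e(x_j)$, and each super-letter, being a one-letter word over $\mathcal X_i$, is super Lyndon; applying Lemma \ref{s4} repeatedly collapses every such iterated bracket into a linear combination of the words $e(L(\bold v))=\iota(\bold v)$ with $\bold v$ super Lyndon of weight $\bold k$, that is $\bold v\in C^{i}(\bold k,G)$. (Equivalently, Proposition \ref{reuts} exhibits $\{L(\bold v):\bold v\in C^{i}(\bold k,G)\}$ as a basis of the graded piece $FLS_{\bold k}(\mathcal X_i)$, which the homomorphism $\Phi$ of \eqref{homsursp} maps onto $\lie g^{i}_{\eta(\bold k)}$.) Combining the two paragraphs gives $\lie g_{\eta(\bold k)}\subseteq\lie g^{i}_{\eta(\bold k)}=\operatorname{span}\{e(L(\bold v)):\bold v\in C^{i}(\bold k,G)\}$, which is the asserted inclusion; together with the obvious reverse inclusion $\operatorname{span}\{\iota(\bold v)\}\subseteq\lie g^{i}_{\eta(\bold k)}\subseteq\lie g_{\eta(\bold k)}$ this also establishes Lemma \ref{helplem1}.
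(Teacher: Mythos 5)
Your proposal is correct in substance, but it takes a genuinely different route from the paper's. The paper proves Lemma \ref{spanlem} by induction on $\htt(\eta(\bold k))$: after the reduction of Lemma \ref{s3}, it isolates the second occurrence of $i$ in $\bold w$, expands $e(\bold w)$ by explicit Jacobi-identity manipulations into a sum of brackets $[e(\bold u),e(\bold v)]$ of lower-weight words, applies the induction hypothesis to each factor, and then collapses every bracket $[L(\bold w_a),L(\bold w_b)]$ via Lemma \ref{s4}, which in turn rests on the case analysis of Lemmas \ref{s1} and \ref{s2}. You instead prove the cleaner statement $e(\bold w)\in\lie g^{i}$ directly, by writing $e(\bold w)=\pm\operatorname{ad}(e_{a_r})\cdots\operatorname{ad}(e_{a_2})(e_i)$ and showing that each $\operatorname{ad}(e_c)$, being a derivation that sends the generators $e(x)$, $x\in\mathcal X_i$, back into $\lie g^{i}$, stabilizes all of $\lie g^{i}$; the lemma then follows by transporting the super Lyndon basis of $FLS_{\bold k}(\mathcal X_i)$ (Proposition \ref{reuts}) through the weight-preserving homomorphism $\Phi$ of \eqref{homsursp}, whose image in weight $\eta(\bold k)$ is exactly $\operatorname{span}\{\iota(\bold v):\bold v\in C^{i}(\bold k,G)\}$. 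Your route is shorter, bypasses Lemmas \ref{s1}, \ref{s2} and \ref{s4} entirely (your optional appeal to Lemma \ref{s4} is redundant once Proposition \ref{reuts} is invoked), and, as you observe, simultaneously yields Lemma \ref{helplem1}; what the paper's computational route buys instead is the explicit rewriting of a given $e(\bold w)$ in terms of the LLN basis, which is what its worked examples display.

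One point needs patching. Your claim that for $c\neq i$ one has $\operatorname{ad}(e_c)(e(x))=\pm e(xc)$ with $xc\in\mathcal X_i$ is not literally true: if $c$ is non-adjacent in $G$ to every letter of $x$, then $c$ commutes past all of $x$ in $M(I,G)$, so $c\in{\rm{IA}}(xc)$ and $xc\notin\mathcal X_i$. However, in exactly this degenerate case $[e_c,e_a]=0$ for every letter $a$ of $x$ by the commutation relations (relation (10) in the definition of $\lie g$), and since $\operatorname{ad}(e_c)$ is a derivation killing every generator of the monomial $e(x)$, one gets $\operatorname{ad}(e_c)(e(x))=0\in\lie g^{i}$, so the stabilization conclusion survives. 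Conversely, when $c$ fails to commute with some letter of $x$, appending $c$ at the end cannot promote any letter of $x$ to the front, so ${\rm{IA}_m}(xc)=\{i\}$ and $i(xc)=1$, giving $xc\in\mathcal X_i$ as you assert. With this dichotomy spelled out, your argument is complete.
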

\begin{pf} Let $e(\bold w) \in \lie g_{\eta(\bold k)}$ for some $\bold w \in M_{\bold k}(I,\eta)$. By Lemma \ref{s3}, we can assume that $\rm{IA}(\bold w)=\{i\} $ . We will do the proof by induction on $\htt(\eta(\bold k))$. If $\htt(\eta(\bold k))=1 $ then $\bold w=i $ and nothing to prove. Assume that the result is true for any $\bold {\tilde{w}}$ such that $\htt(\rm{wt}(\bold {\tilde{w}})) < ht(\eta(k))$. Let $\bold w =ia_1a_2\cdots a_r = i \cdot \bold u$. 
	
	If $i(\bold u) = \phi $ then $ \bold w \in \mathcal X_i \Rightarrow L(\bold w)=\bold w \Rightarrow e(\bold w) = e(L(\bold w)) \in \rm{span}\{ e(L(C^i(\bold k, G))\}$.

	If $i(\bold u) \neq \phi $ then let $min\{i(\bold u)\}= p+1\, $ (say). Set $\bold w'= ia_1a_2\cdots a_pi$. Now,
	\begin{align*}
	e(\bold w') &= [[[[i,a_1],a_2]\cdots ,a_p],i]\\
	&=- [i,[[[i,a_1],a_2]\cdots ,a_p]]\\
	&= e(L(iia_1\cdots a_p)) 
	\end{align*}
	$$\Rightarrow e(\bold w') \in \rm{span}\{e(L(C^i(wt(\bold w'), G))\} \text{ as } (iia_1\cdots a_p) \text{ is a super Lyndon word}.$$
	Now, 
	\begin{align*}
	e(\bold w' \cdot a_{p+2})&= [[[[[i,a_1],a_2]\cdots ,a_p],i],a_{p+2}]\\
	&=[e(\bold w'),a_{p+2}]\\
	&=[[e(i),e(ia_1a_2\cdots a_p)],a_{p+2}]\\
	&=[e(i),[e(ia_1a_2\cdots a_p),a_{p+2}]]+ [[e(i),a_{p+2}],e(ia_1a_2\cdots a_p)]\\
	&= [e(i),e(ia_1a_2\cdots a_pa_{p+2})]+ [e(ia_{p+2}),e(ia_1a_2\cdots a_p)]
	\end{align*}
	Similarly, again using the Jacobi identity, we can write \begin{align*}
	e(\bold w) = &[e(i),e(ia_1a_2\cdots \hat{a_{p+1}} \cdots a_r)]+ \sum\limits_{t= p+2}^{k} [e(ia_1a_2\cdots \hat{a_{p+1}}\cdots \hat{a_t} \cdots a_r), e(ia_t)]+\\
	& + \sum\limits_{ p+2 \leq t_1 < t_2 \leq r}^{k} [e(ia_1a_2\cdots \hat{a_{p+1}}\cdots \hat{a_{t_1}}\cdots \hat{a_{t_2}} \cdots a_r), e(ia_{t_1}a_{t_2})]\\
	& + \sum\limits_{ p+2 \leq t_1 < t_2 <t_3 \leq r}^{k} [e(ia_1a_2\cdots \hat{a_{p+1}}\cdots \hat{a_{t_1}}\cdots \hat{a_{t_2}}\cdots \hat{a_{t_3}} \cdots a_r), e(ia_{t_1}a_{t_2}a_{t_3})]+ \cdots \\
	& + [e(ia_1\cdots a_{p}),e(ia_{p+2}\cdots a_r)]
	\end{align*}
	Using the induction hypothesis, we can check that each term on the right-hand side is of the form 
	$$[e(ia_1a_2\cdots \hat{a_p}a_{p+1}\cdots \hat{a_{t_1}}\cdots \hat{a_{t_2}} \cdots \hat{a_{t_j}}\cdots a_r), e(ia_{t_1}a_{t_2}\cdots a_{t_j})] =\left[ \sum\limits_{a} e(L(\bold{w_a})), \sum\limits_{b} e(L(\bold{w_b}))\right] $$
	as $\left( \rm{wt}(ia_1a_2\cdots \hat{a_p}a_{p+1}\cdots \hat{a_{t_1}}\cdots \hat{a_{t_2}} \cdots \hat{a_{t_j}}\cdots a_r) < \rm{wt}(\bold w) \right)$. So  
	
	$e(ia_1a_2\cdots \hat{a_p}a_{p+1}\cdots \hat{a_{t_1}}\cdots \hat{a_{t_2}} \cdots \hat{a_{t_j}}\cdots a_r)= \sum\limits_{a} e(L(\bold{w_a}))$ where $\bold{w_a}$ is super Lyndon word, $\rm{wt}(\bold{w_a})= \rm{wt}(ia_1a_2\cdots \hat{a_p}a_{p+1}\cdots \hat{a_{t_1}}\cdots \hat{a_{t_2}} \cdots \hat{a_{t_j}}\cdots a_r)$ and $\rm{wt}(ia_{t_1}a_{t_2}\cdots a_{t_j}) < \rm{wt}(\bold{w_b}).$ So $e(ia_{t_1}a_{t_2}\cdots a_{t_j})=\sum\limits_{b} e(L(\bold{w_b}))$ where $\bold{w_b}$ is super Lyndon word and $\rm{wt}(\bold {w_b})= \rm{wt}(ia_{t_1}a_{t_2}\cdots a_{t_j})$. 
	\begin{align*}
	\Rightarrow  \left[ \sum\limits_{a} e(L(\bold{w_a})), \sum\limits_{b} e(L(\bold{w_b}))\right] &= \sum\limits_{a,b}[e(L(\bold{w_a})),e(L(\bold{w_b}))]\\
	&=\sum\limits_{a,b} e\left( [L(\bold{w_a}),L(\bold{w_b})]\right)
	\end{align*}
	By  Lemma \ref{s4}, we can write $[L(\bold{w_a}),L(\bold{w_b})] \in  \rm{span} \{L(C^i(\bold k, G))\}$.\\ Thus $$ \sum\limits_{a,b} e\left( [L(\bold{w_a}),L(\bold{w_b})]\right) \in \rm{span}\, e(L(C^i(\bold k, G)).$$
	$$\Rightarrow e(\bold w) \in \rm{span} \{e(L(C^i(\bold k, G))\}.$$
	Hence $\lie g_{\eta(\bold k)}$ is contained in the span of $\{e(L(C^i(\bold k, G))\} \subseteq \lie g^{i}$ .
	
\end{pf}

\subsection{Proof of Lemma \ref{helplem2} (Identification of $C^{i}(\bold k,G)$ and super Lyndon heaps)}

Let $\lie g$ be the BKM superalgebra associated with the Borcherds-Kac-Moody supermatrix $(A,\Psi)$. Let $G$ be the associated quasi-Dynkin diagram of $\lie g$ with the vertex set $I$. Fix $\bold k \in \mathbb Z_+[I]$ such that $k_i \le 1$ for $i \in I^{re}\sqcup \Psi_0$. 


Fix $i \in I$ and assume that $i$ is the minimum element in the total order of $I$. Consider $$\mathcal X_i = \{\bold w \in M(I,G,\Psi) : \rm{IA}_m(\bold w) = \{i\} \text{ and $i$ occurs only once in $\bold w$} \}.$$
Now, Let $\bold w \in \mathcal X_i$ and $ E = \psi (\bold w)$ be the corresponding heap. Then
\begin{enumerate}
	\item $\rm{IA}_m(\bold w) = \{i\}$ implies that $E$ is a pyramid.
	\item $i$ occurs exactly once in $\bold w$ implies that $E$ is elementary
	\item $i$ is the minimum element in the total order of $I$ implies that $ E$ is an admissible pyramid. 
\end{enumerate}
\begin{equation}\label{realization}
\text{ Therefore } \bold w \in \mathcal X_i \text{ if and only if }  E = \psi(\bold w) \text{ is a super-letter.}
\end{equation}
Let $\mathcal A_i(I,\zeta)$ be the set of all super-letters with basis $i$ in $\mathcal H(I,\zeta)$. Let $\mathcal A_i^{*}(I,\zeta)$ be the monoid generated by $\mathcal A_i(I,\zeta)$ in $\mathcal H(I,\zeta)$. Then $\mathcal A_i^{*}(I,\zeta) = \mathcal A_{i,0}^{*}(I,\zeta) \oplus \mathcal A_{i,1}^{*}(I,\zeta)$ is also $\mathbb Z_2$-graded. This monoid is free by the discussion below \cite[Definition 2.1.4]{la95}, and by \cite[Proposition 1.3.5 and Proposition 2.1.5]{la95}.  We have $\mathcal H(I,\zeta)$ is totally ordered and hence $\mathcal A_i(I,\zeta)$ is totally ordered. This implies that $\mathcal A_i^*(I,\zeta)$ is totally ordered by the lexicographic order induced from the order in $\mathcal A_i(I,\zeta)$ (call it $\le^{\ast}$).
The following proposition from \cite[Proposition 2.1.6]{la95} illustrates the relation between the total order $\le$ on the heaps monoid $\mathcal H(I,\zeta)$ and the total order $\le^{\ast}$ on the monoid $\mathcal A_i^{*}(I,\zeta)$.
\begin{prop}\label{order}
	Let $ E, F \in \mathcal A_i^{\ast}(I,\zeta)$. Then $ E \le^{\ast} F$ if, and only if, $E \le F$.
\end{prop}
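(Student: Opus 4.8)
The plan is to exploit the fact that both $\le^{\ast}$ and $\le$ are total orders on the free monoid $\mathcal A_i^{\ast}(I,\zeta)$, freeness (hence unique factorization of each element into super-letters) being available from \cite[Proposition 2.1.5]{la95}. To show two total orders coincide it suffices to establish a single strict implication, say $E <^{\ast} F \Rightarrow E < F$; the converse then follows from totality and antisymmetry. Throughout I would use two elementary features of a super-letter $A$ with basis $i$. First, since $A$ is an admissible elementary pyramid, its unique minimal piece sits at position $i$, the least element of $I$; reading $\st(A)$ by the greedy rule below forces its first letter to be $i$, and elementarity ($|A|_i=1$) means $i$ occurs exactly once, so $\st(A)=i\,u$ with $u$ a word on $I\setminus\{i\}$. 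As $i$ is the minimum of $I$, every letter of $u$ is $>i$.

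The technical core, and the step I expect to be the main obstacle, is the following concatenation identity: if $E=A_1\circ\cdots\circ A_m$ is the super-letter factorization of $E\in\mathcal A_i^{\ast}(I,\zeta)$, then $\st(E)=\st(A_1)\cdots\st(A_m)$. I would prove this by induction on $m$, writing $E=E'\circ A_m$ with $E'=A_1\circ\cdots\circ A_{m-1}$, and using that $\st(E)$ is computed greedily: one repeatedly deletes the source (minimal piece) of largest position-value, obtaining the lexicographically largest linear extension. Two observations drive the induction. (i) In a superposition $E'\circ A_m$ a falling piece of $A_m$ can only come to rest above pieces of $E'$, so no piece of $A_m$ lies below a piece of $E'$; equivalently $E'$ is a down-set of $E$, while every piece of $A_m$ lies weakly above the $i$-piece $z_m$ of $A_m$. (ii) The piece $z_m$ has position-value $i$, the global minimum. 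Combining these: while $E'$ is not exhausted, $z_m$ cannot be the unique available source (a remaining piece of the down-set $E'$ is never above $z_m$, else $z_m$ would be forced into $E'$), and any competing source has value $>i$ (the lower $i$-pieces $z_1,\dots,z_{m-1}$ are already gone once $z_m$ is a source); hence the greedy rule never selects $z_m$, and therefore no piece of $A_m$, before $E'$ is used up. Thus the greedy deletion sequence lists all of $E'$, producing $\st(E')$, and then all of $A_m$, producing $\st(A_m)$, so $\st(E)=\st(E')\st(A_m)$, and the induction hypothesis finishes it.

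With the concatenation identity in hand I would conclude by a direct lexicographic comparison. Suppose $E<^{\ast}F$ with factorizations $E=A_1\cdots A_m$ and $F=B_1\cdots B_n$, and let $t$ be the first index where they differ. By the identity, $\st(E)$ and $\st(F)$ share the common prefix $\st(A_1)\cdots\st(A_{t-1})$, after which one compares $\st(A_t)\st(A_{t+1})\cdots$ with $\st(B_t)\st(B_{t+1})\cdots$; if $E$'s factorization is a proper prefix of $F$'s, then $\st(E)$ is a proper prefix of $\st(F)$ and we are done. Since $A_t<B_t$ means $\st(A_t)<\st(B_t)$, either these two already differ at a common position inside $\st(A_t)$ versus $\st(B_t)$, in which case that difference survives the concatenation and gives $\st(E)<\st(F)$, or $\st(A_t)$ is a proper prefix of $\st(B_t)$. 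In the latter case $\st(B_t)=\st(A_t)\,w$ with $w$ a nonempty $i$-free word, so $w$ begins with a letter $>i$, whereas $\st(E)$ continues after $\st(A_t)$ either by ending or with the block $\st(A_{t+1})$, which begins with $i$; since $i$ is the least letter this again yields $\st(E)<\st(F)$. In all cases $E<F$, completing the reduction. I would note that the only genuinely geometric input is observation (i) about superpositions, the remainder being bookkeeping organized around the minimality of the base letter $i$.
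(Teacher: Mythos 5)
Your proposal is correct, and it matches the route of the source: the paper does not prove this proposition itself but imports it verbatim from \cite[Proposition 2.1.6]{la95}, and your argument reconstructs precisely the mechanism behind that citation, namely the concatenation identity $\st(A_1\circ\cdots\circ A_m)=\st(A_1)\cdots\st(A_m)$ for products of super-letters, obtained from the down-set property of superposition together with the fact that the only competing source from an upper factor is its base at the minimal letter $i$, which loses the greedy comparison to any remaining source of the lower part. The one step worth making explicit is that your observation (i) also forces the order induced on the down-set $E'$ inside $E$ to coincide with the intrinsic order of $E'$ (a chain between two pieces of $E'$ passing through a piece of $A_m$ would place that piece of $A_m$ below $E'$, which is impossible), since this is what licenses the claim that the greedy deletion, while confined to $E'$, outputs exactly $\st(E')$.
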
 Given this, we can talk about the Lyndon words over the alphabets $\mathcal A_i(I,\zeta)$. The following proposition from \cite[Proposition 2.1.7]{la95} illustrates the relationship between the Lyndon words in $\mathcal A_i^*(I,\zeta)$ and the Lyndon heaps in $\mathcal H(I,\zeta)$.

\begin{prop}\label{liffl}
	Let $E \in \mathcal A_i^*(I,\zeta)$ then $E$ is a Lyndon word in $\mathcal A_i^*(I,\zeta)$ if and only if $E$ is a Lyndon heap as an element of $\mathcal H(I,\zeta)$.
\end{prop}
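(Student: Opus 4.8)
The plan is to leverage the two structural facts already assembled: that $\mathcal A_i^*(I,\zeta)$ is a \emph{free} monoid on the super-letters $\mathcal A_i(I,\zeta)$, and that by Proposition \ref{order} the lexicographic order $\le^{\ast}$ on $\mathcal A_i^*(I,\zeta)$ is the restriction of the heap order $\le$. Both notions of ``Lyndon'' have the shape ``primitive $+$ minimal in an equivalence class'': a Lyndon word is primitive in the free monoid and $\le^{\ast}$-smaller than all its proper cyclic rotations, while a Lyndon heap is primitive and $\le$-minimal in its heap-conjugacy class $\sim_c$. So it suffices to prove \textbf{(I)} primitivity as a word $\Leftrightarrow$ primitivity as a heap, and \textbf{(II)} $\le^{\ast}$-minimality among cyclic rotations $\Leftrightarrow$ $\le$-minimality in the $\sim_c$-class. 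Throughout I use the characterization of the elements of $\mathcal A_i^*(I,\zeta)$ as exactly the heaps whose unique minimal piece sits at position $i$, i.e. the admissible pyramids with basis $i$; this follows from a super-letter being an admissible elementary pyramid with basis $i$, from $i$ being the least element of $I$, and from the factorization of \cite{la95}. Equivalently, $E\in\mathcal A_i^*(I,\zeta)$ if and only if $\pi(\min E)=\{i\}$.

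First I would dispose of \textbf{(I)} and the easy half of \textbf{(II)}. If the word $s_1\cdots s_n$ is a proper power then so is the corresponding heap, by freeness. Conversely, if $E=F^{k}$ in $\mathcal H(I,\zeta)$ with $k\ge 2$ and $F\ne 0$, then every position occurring in an upper copy of $F$ already occurs in the bottom copy, so $\min(F^{k})=\min(F)$; hence $\pi(\min F)=\pi(\min E)=\{i\}$, giving $F\in\mathcal A_i^*(I,\zeta)$, and freeness forces the word of $E$ to be the $k$-th power of the word of $F$. For one direction of \textbf{(II)}, each cyclic rotation of $E=s_1\cdots s_n$ is $\sim_c$-conjugate to $E$: from $E=(s_1\cdots s_j)\circ(s_{j+1}\cdots s_n)$ a single transposition yields $(s_{j+1}\cdots s_n)\circ(s_1\cdots s_j)=s_{j+1}\cdots s_n s_1\cdots s_j\in\mathcal A_i^*(I,\zeta)$. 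Thus if $E$ is $\le$-minimal in its $\sim_c$-class it is, via Proposition \ref{order}, also $\le^{\ast}$-minimal among its cyclic rotations.

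The crux is the reverse direction of \textbf{(II)}, for which I must show the $\le$-minimum of the entire $\sim_c$-class lies in $\mathcal A_i^*(I,\zeta)$ and is a cyclic rotation of $E$. This splits into two sub-claims. \textbf{(a)} The class minimum lies in $\mathcal A_i^*(I,\zeta)$: since $\st$ is the lexicographically maximal linear extension, its first letter equals $\max\pi(\min F)$; for $E\in\mathcal A_i^*(I,\zeta)$ this first letter is $i$ (the least letter), whereas any conjugate $F\notin\mathcal A_i^*(I,\zeta)$ has a minimal piece at a position $>i$, so $\st(F)$ begins with a letter $>i$ and therefore $E<F$. Hence no conjugate outside $\mathcal A_i^*(I,\zeta)$ can be minimal, and the minimum is attained inside $\mathcal A_i^*(I,\zeta)$. \textbf{(b)} Two $\sim_c$-conjugate elements of $\mathcal A_i^*(I,\zeta)$ are cyclic rotations as words: the $i$-pieces are totally ordered by level and mark the bases of the super-letter factors, and one checks that passing to the associated bi-infinite periodic heap identifies the $\sim_c$-class with the necklace (cyclic) class of the super-letter word $(s_1,\dots,s_n)$. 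Granting \textbf{(a)} and \textbf{(b)}, the class minimum is a cyclic rotation of $E$; so if $E$ is $\le^{\ast}$-minimal among its rotations it equals this minimum, i.e. it is $\le$-minimal in its $\sim_c$-class, again by Proposition \ref{order}.

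Combining the two parts, $E$ is a Lyndon heap iff $E$ is primitive and $\le$-minimal in its $\sim_c$-class iff (by \textbf{(I)} and \textbf{(II)}) $E$ is primitive and $\le^{\ast}$-minimal among its cyclic rotations iff $E$ is a Lyndon word, with the length-one case being the trivial base. I expect sub-claim \textbf{(b)} to be the main obstacle: \textbf{(a)} is immediate once one reads off the leading letter of $\st$, but \textbf{(b)} is exactly where the global relation $\sim_c$ on $\mathcal H(I,\zeta)$ must be reconciled with the word-level cyclic structure of the free monoid $\mathcal A_i^*(I,\zeta)$, since intermediate heaps in a $\sim_c$-chain need not remain in $\mathcal A_i^*(I,\zeta)$. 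Controlling how a single transposition $U\circ V\mapsto V\circ U$ permutes the super-letter necklace is the heart of the argument and is precisely the content of \cite[Proposition 2.1.7]{la95} that we follow.
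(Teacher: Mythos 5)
You should first be aware that the paper does not prove Proposition \ref{liffl} at all: it is imported verbatim from \cite[Proposition 2.1.7]{la95}, immediately after the citations of \cite[Propositions 1.3.5, 2.1.5 and 2.1.6]{la95} that you also invoke. So there is no in-paper proof to match, and your outline is in effect a sketch of Lalonde's own development (order compatibility via Proposition \ref{order}, the identification of $\mathcal A_i^*(I,\zeta)$ with the pyramids with basis $i$, and the split into primitivity plus minimality). Judged as a proof, however, it has a genuine gap exactly where you place the weight: sub-claim \textbf{(b)}. You must show that two $\sim_c$-conjugate elements of $\mathcal A_i^*(I,\zeta)$ are cyclic rotations of one another as words over the alphabet $\mathcal A_i(I,\zeta)$, and you correctly observe that this cannot be tracked transposition by transposition, since the intermediate heaps in a $\sim_c$-chain generally leave $\mathcal A_i^*(I,\zeta)$ (e.g. $E=U\circ V\mapsto V\circ U$ where the cut falls inside a super-letter). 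Your only substantive gesture at this point --- ``passing to the associated bi-infinite periodic heap identifies the $\sim_c$-class with the necklace class'' --- is an unproved assertion essentially equivalent to the proposition itself, and you then explicitly defer it to \cite[Proposition 2.1.7]{la95}, i.e.\ to the statement being proved. That is circular: a complete argument must actually establish the necklace identification, for instance by showing that the totally ordered chain of pieces at position $i$ (which exists in every conjugate, since conjugation preserves content and pieces at a common position are pairwise concurrent) induces a well-defined cutting into super-letter factors whose cyclic sequence is a conjugacy invariant, with a genuine induction over transposition chains passing through heaps outside $\mathcal A_i^*(I,\zeta)$.

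A second, smaller gap is in step \textbf{(I)}. The paper's notion of a primitive heap is the absence of a nontrivial \emph{commuting} factorization $E=U\circ V=V\circ U$, which for general heaps is strictly stronger than aperiodicity: two single pieces at independent positions commute, so $a\circ b=b\circ a$ is imprimitive without being a power. Your argument only compares powers (word is a proper power $\Leftrightarrow$ heap is a proper power), silently conflating the two notions. The conflation happens to be harmless here, but for a reason you must supply: an element of $\mathcal A_i^*(I,\zeta)$ is a pyramid, so a commuting factorization into heaps with independent supports would yield two distinct minimal pieces, which is impossible; by the commutation theorem for trace monoids (Duboc), the remaining case is $U=T^a$, $V=T^b$, which returns you to the power case you did treat. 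Your sub-claim \textbf{(a)} and the easy half of \textbf{(II)} are fine as written: the greedy description of $\st$ does show that any conjugate with a minimal piece at a position exceeding $i$ is strictly larger, and a single transposition does realize each word-level rotation. But with \textbf{(b)} outsourced to the target result and the primitivity distinction unaddressed, the proposal is an accurate road map rather than a proof.
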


	Next, we prove the following generalization of Proposition \ref{liffl} for the case of super Lyndon words and super Lyndon heaps.
	\begin{prop}\label{sliffsl}
		Let $ E \in \mathcal A_i^*(I,\zeta)$ then $ E$ is a super Lyndon word in $\mathcal A_i^*(I,\zeta)$ if and only if $E$ is a super Lyndon heap as an element of $\mathcal H(I,\zeta)$.
	\end{prop}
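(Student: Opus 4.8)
The plan is to reduce the statement to Proposition \ref{liffl} (the non-super case) by exploiting that the identification $\psi$ preserves both the $\mathbb Z_2$-grading and the ``squaring'' operation appearing in the definition of super Lyndon objects. First I would record the structural facts I intend to transport across: by Equation \eqref{realization} the map $\psi$ restricts to a weight- and order-preserving monoid isomorphism $\mathcal X_i^* \xrightarrow{\sim} \mathcal A_i^*(I,\zeta)$; this isomorphism is $\mathbb Z_2$-graded (both gradings are induced from that of $I^*$), so a word $\bold u$ lies in $\mathcal A_{i,1}^*(I,\zeta)$ exactly when the corresponding heap lies in $\mathcal H_1(I,\zeta)$; and the monoid product on $\mathcal A_i^*(I,\zeta)\subseteq\mathcal H(I,\zeta)$ is the superposition $\circ$, so that a ``square'' $\bold u\bold u$ in the word sense is literally the heap $\bold u\circ\bold u$.

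For the forward direction I would assume $E$ is a super Lyndon word in $\mathcal A_i^*(I,\zeta)$. If $E$ is a Lyndon word it is a Lyndon heap by Proposition \ref{liffl}, hence a super Lyndon heap. Otherwise $E=\bold u\bold u$ with $\bold u$ an odd Lyndon word; Proposition \ref{liffl} makes the heap $\bold u$ Lyndon, the graded compatibility above places $\bold u\in\mathcal H_1(I,\zeta)$, and $E=\bold u\circ\bold u$ then exhibits $E$ as a super Lyndon heap. This direction has no subtlety since $\bold u$ is already a heap in $\mathcal A_i^*(I,\zeta)$.

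For the converse I would assume $E\in\mathcal A_i^*(I,\zeta)$ is a super Lyndon heap. The Lyndon case is again immediate from Proposition \ref{liffl}. The remaining case is $E=F\circ F$ with $F\in\mathcal H_1(I,\zeta)$ a Lyndon heap, and here lies the one genuine difficulty: \emph{a priori} $F$ is only a heap, whereas to invoke Proposition \ref{liffl} I must know $F\in\mathcal A_i^*(I,\zeta)$. I would establish this ``square-purity'' of the free submonoid $\mathcal A_i^*(I,\zeta)$ through the chain of $i$-pieces: each super-letter carries exactly one piece in position $i$ (it is an elementary pyramid with basis $i$), the $i$-pieces of any heap form a $\le_E$-chain $c_1<_E\cdots<_E c_{2m}$, and the unique super-letter factorization of $E\in\mathcal A_i^*(I,\zeta)$ attaches one factor to each $c_j$. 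Since in $E=F\circ F$ the top copy of $F$ lies entirely above the bottom one, these factors split as $A_1\cdots A_m$ (inside the bottom $F$) followed by their translates $A_{m+1}\cdots A_{2m}$ (inside the top $F$), whence $F=A_1\cdots A_m\in\mathcal A_i^*(I,\zeta)$. Once $F\in\mathcal A_i^*(I,\zeta)$ is known, Proposition \ref{liffl} makes $F$ a Lyndon word, odd by the graded compatibility, so $E=FF$ is a super Lyndon word.

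The main obstacle is precisely this implication $F\circ F\in\mathcal A_i^*(I,\zeta)\Rightarrow F\in\mathcal A_i^*(I,\zeta)$; everything else is a direct transport of the two defining clauses through $\psi$ together with Proposition \ref{liffl}. I expect the freeness of $\mathcal A_i^*(I,\zeta)$ (from \cite[Proposition 1.3.5 and Proposition 2.1.5]{la95}) and careful bookkeeping of the $i$-piece chain to settle it, the delicate point being to verify that the canonical super-letter factorization genuinely respects the decomposition $E=F\circ F$.
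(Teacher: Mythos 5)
Your proof is correct and its skeleton is exactly the paper's: both directions are the two-case transport through Proposition \ref{liffl}, using that $\psi$ identifies $\mathcal X_i^*$ with $\mathcal A_i^*(I,\zeta)$ compatibly with the $\mathbb Z_2$-grading and that the product in $\mathcal A_i^*(I,\zeta)$ is superposition, so word-squares are heap-squares; your forward direction is essentially verbatim the paper's. The genuine difference is the converse square case. There the paper simply writes that ``again by Proposition \ref{liffl}, $F$ is a Lyndon word in $\mathcal A_{i,1}^*(I,\zeta)$'', applying Proposition \ref{liffl} to $F$ without verifying its hypothesis $F\in\mathcal A_i^*(I,\zeta)$ --- exactly the square-purity implication $F\circ F\in\mathcal A_i^*(I,\zeta)\Rightarrow F\in\mathcal A_i^*(I,\zeta)$ that you isolate as the one real difficulty; so your proposal makes explicit, and undertakes to fill, a step the published argument leaves silent. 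Two remarks on your patch. First, the assertion that in $E=F\circ F$ the top copy of $F$ ``lies entirely above'' the bottom one is false for arbitrary pieces, since pieces in non-concurrent positions are incomparable in $\le_E$; it is valid only for the pieces in position $i$, which do form a chain because $i\,\zeta\,i$. As your bookkeeping uses only the $i$-pieces this is mostly a phrasing issue, but the claim that the unique super-letter factorization splits as $A_1\cdots A_m$ inside the bottom copy and translates inside the top copy still requires an argument via the freeness of $\mathcal A_i^*(I,\zeta)$, not just the chain of $i$-pieces. Second, a shorter repair is available and is presumably what the paper tacitly assumes: $F$ is by hypothesis a Lyndon heap, hence an admissible pyramid, and since $i$ is the least vertex of $I$ and $\supp F=\supp E\ni i$, its basis is an $i$-piece; the structure results already invoked for the freeness of $\mathcal A_i^*(I,\zeta)$ (\cite[Propositions 1.3.5 and 2.1.5]{la95}) identify such pyramids with products of super-letters, giving $F\in\mathcal A_i^*(I,\zeta)$ directly and dispensing with the factor-splitting bookkeeping. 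Either way your route is sound once the splitting step is carried out carefully, and it buys a self-contained justification of a point on which the paper's own proof of Proposition \ref{sliffsl} is, strictly speaking, incomplete.
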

	\begin{pf}
		Let $ E \in \mathcal A_i^*(I,\zeta)$ be a super Lyndon word. Then either $E$ is a Lyndon word in $\mathcal A_i^*(I,\zeta)$ or $ E = F \circ F$ for some Lyndon word $F \in \mathcal A_{i,1}^*(I,\zeta)$. Suppose the former case holds, then by Proposition \ref{liffl}, $E$ is a Lyndon heap and hence is a super Lyndon heap in $\mathcal H(I,\zeta)$.
		Suppose the latter case holds, then again by Proposition \ref{liffl}, $F$ is a Lyndon heap in $\mathcal H(I,\zeta)$ and hence $E = F \circ F$ is a super Lyndon heap in $\mathcal H(I,\zeta)$. 
		
		Conversely, suppose $E$ is a super Lyndon heap in $\mathcal H(I,\zeta)$. Then $ E$ is a Lyndon heap in $\mathcal H(I,\zeta)$ or $E = F \circ F$ for some Lyndon heap $ F \in \mathcal H(I,\zeta)$. Suppose the former case holds, then by Proposition \ref{liffl}, $E$ is a Lyndon word and hence is a super Lyndon word in $\mathcal A_i^*(I,\zeta)$.
		Suppose the latter case holds, then again by Proposition \ref{liffl}, $F$ is a Lyndon word in $\mathcal A_{i,1}^*(I,\zeta)$ and hence $E = F \circ F$ is a super Lyndon word in $\mathcal A_i^*(I,\zeta)$. 
	\end{pf}
	By Equation \eqref{realization}, we can identify $\mathcal X_i^*$ with $\mathcal A_i^*(I,\zeta)$.  This implies that 
	\begin{align*}
	|C^i(\bold k,G)| &= |\{\text{ super Lyndon words in }\mathcal X_i^* \text{ of weight } \bold k\}| \\ &= |\{\text{ super Lyndon words of weight $\bold k$ in }\mathcal A_i^*(I,\zeta)\}| \\ &= |\{\text{ super Lyndon heaps of weight $\bold k$ in }\mathcal H(I,\zeta)\}| \\ &= \dim \mathcal {LS}_{\bold k}(G) \\ & = \dim \lie g_{\eta (\bold k)}  \,\,\,(\text{By Theorem \ref{lafps}})
	.\end{align*}
	This shows that the elements of $C^{i}(\bold k,G)$ are precisely the Lyndon heaps of weight $\bold k$ and  completes the proof of Lemma \ref{helplem2}.

\begin{rem}
	The important step in the proof of Theorem \ref{mainthmbb} given in \cite{akv17}   is the proof of the equality $\mult \eta(\bold k) = |C^{i}(\bold k,G)|$ (\cite[Proposition 4.5 (iii)]{akv17}). The key idea is to reduce the proof to $\bold k = \bold 1 = (1,1,1,\dots)$-case and then to use a result of Greene and Zaslavsky \cite{GZ83} for acyclic orientations of $G$ to complete the proof. Also, Theorem \ref{mainthmchb} is used in the proof (in other words the denominator identity of $\lie g$ is used in the proof). Above, we have given a simpler and  direct proof of this equality by identifying the set $C^{i}(\bold k,G)$ with the set of Lyndon heaps over graph $G$. Our proof doesn't use Theorem \ref{mainthmchb} and hence independent of the denominator identity of $\lie g$. Similarly, we don't need the result of Greene and Zaslavsky. Also, we have extended this equality to the super case using super Lyndon heaps. 
\end{rem}

\begin{example}\label{LLLN}
The following diagram explains the connection between the Lyndon basis and the LLN basis of the free root spaces of a BKM superalgebra $\lie g$.

\tikzset{every picture/.style={line width=0.75pt}} 

\begin{tikzpicture}[x=0.75pt,y=0.75pt,yscale=-1,xscale=1]

\draw   (42,42) -- (145,42) -- (145,75) -- (42,75) -- cycle ;
\draw    (146,55) -- (286,55) ;
\draw [shift={(288,55)}, rotate = 180] [color={rgb, 255:red, 0; green, 0; blue, 0 }  ][line width=0.75]    (10.93,-3.29) .. controls (6.95,-1.4) and (3.31,-0.3) .. (0,0) .. controls (3.31,0.3) and (6.95,1.4) .. (10.93,3.29)   ;
\draw   (291,20) -- (394,20) -- (394,95) -- (291,95) -- cycle ;
\draw    (394,57) -- (532,56.01) ;
\draw [shift={(534,56)}, rotate = 539.5899999999999] [color={rgb, 255:red, 0; green, 0; blue, 0 }  ][line width=0.75]    (10.93,-3.29) .. controls (6.95,-1.4) and (3.31,-0.3) .. (0,0) .. controls (3.31,0.3) and (6.95,1.4) .. (10.93,3.29)   ;
\draw   (536,22) -- (593,22) -- (593,80) -- (536,80) -- cycle ;
\draw   (45,195) -- (148,195) -- (148,268) -- (45,268) -- cycle ;
\draw    (148,228) -- (288,228) ;
\draw [shift={(290,228)}, rotate = 180] [color={rgb, 255:red, 0; green, 0; blue, 0 }  ][line width=0.75]    (10.93,-3.29) .. controls (6.95,-1.4) and (3.31,-0.3) .. (0,0) .. controls (3.31,0.3) and (6.95,1.4) .. (10.93,3.29)   ;
\draw   (293,193) -- (396,193) -- (396,265) -- (293,265) -- cycle ;
\draw    (396,230) -- (534,229.01) ;
\draw [shift={(536,229)}, rotate = 539.5899999999999] [color={rgb, 255:red, 0; green, 0; blue, 0 }  ][line width=0.75]    (10.93,-3.29) .. controls (6.95,-1.4) and (3.31,-0.3) .. (0,0) .. controls (3.31,0.3) and (6.95,1.4) .. (10.93,3.29)   ;
\draw   (542,207) -- (583,207) -- (583,241) -- (542,241) -- cycle ;
\draw    (90,75) -- (93.94,183) ;
\draw [shift={(94,195)}, rotate = 268.2] [color={rgb, 255:red, 0; green, 0; blue, 0 }  ][line width=0.75]    (10.93,-3.29) .. controls (6.95,-1.4) and (3.31,-0.3) .. (0,0) .. controls (3.31,0.3) and (6.95,1.4) .. (10.93,3.29)   ;
\draw    (94,195) -- (90.06,78) ;
\draw [shift={(90,75)}, rotate = 448.2] [color={rgb, 255:red, 0; green, 0; blue, 0 }  ][line width=0.75]    (10.93,-3.29) .. controls (6.95,-1.4) and (3.31,-0.3) .. (0,0) .. controls (3.31,0.3) and (6.95,1.4) .. (10.93,3.29)   ;

\draw    (210,80) -- (215,188) ;
\draw [shift={(215,200)}, rotate = 268.2] [color={rgb, 255:red, 0; green, 0; blue, 0 }  ][line width=0.75]    (10.93,-3.29) .. controls (6.95,-1.4) and (3.31,-0.3) .. (0,0) .. controls (3.31,0.3) and (6.95,1.4) .. (10.93,3.29)   ;
\draw    (215,200) -- (210,83) ;
\draw [shift={(210,80)}, rotate = 448.2] [color={rgb, 255:red, 0; green, 0; blue, 0 }  ][line width=0.75]    (10.93,-3.29) .. controls (6.95,-1.4) and (3.31,-0.3) .. (0,0) .. controls (3.31,0.3) and (6.95,1.4) .. (10.93,3.29)   ;

\draw (42,42) node [anchor=north west][inner sep=0.75pt]   [align=left] {\begin{minipage}[lt]{69.26412pt}\setlength\topsep{0pt}
	\begin{center}
	Lyndon heaps 
	\end{center}
	
	\end{minipage}};
\draw (291,20) node [anchor=north west][inner sep=0.75pt]   [align=left] {\begin{minipage}[lt]{71.32588000000001pt}\setlength\topsep{0pt}
	\begin{center}
	Lie monomials \\in \\Super letters
	\end{center}
	
	\end{minipage}};
\draw (536,19) node [anchor=north west][inner sep=0.75pt]   [align=left] {\begin{minipage}[lt]{38.643108000000005pt}\setlength\topsep{0pt}
	\begin{center}
	Lyndon \\heaps\\basis\\
	\end{center}
	
	\end{minipage}};
\draw (45,196) node [anchor=north west][inner sep=0.75pt]   [align=left] {\begin{minipage}[lt]{68.69088pt}\setlength\topsep{0pt}
	\begin{center}
	Lyndon words \\over \\Super letters
	\end{center}
	
	\end{minipage}};
\draw (293,194) node [anchor=north west][inner sep=0.75pt]   [align=left] {\begin{minipage}[lt]{71.32588000000001pt}\setlength\topsep{0pt}
	\begin{center}
	Lie monomials \\in \\Super letters
	\end{center}
	
	\end{minipage}};
\draw (542,208) node [anchor=north west][inner sep=0.75pt]   [align=left] {\begin{minipage}[lt]{26.541216000000002pt}\setlength\topsep{0pt}
	\begin{center}
	LLN \\basis\\
	\end{center}
	
	\end{minipage}};
\draw (-5,129) node [anchor=north west][inner sep=0.75pt]   [align=left] {Identification};
\draw (225,129) node [anchor=north west][inner sep=0.75pt]   [align=left] {Identification};
\draw (0,148) node [anchor=north west][inner sep=0.75pt]   [align=left] {(Lemma \ref{identification lem})};
\draw (219,148) node [anchor=north west][inner sep=0.75pt]   [align=left] {(Proposition \ref{b=b})};
\draw (209,210) node [anchor=north west][inner sep=0.75pt]   [align=left] {L};
\draw (463,61) node [anchor=north west][inner sep=0.75pt]   [align=left] {$\displaystyle \Lambda $};
\draw (200,58) node [anchor=north west][inner sep=0.75pt]   [align=left] {$\displaystyle \Lambda$};
\draw (465,210) node [anchor=north west][inner sep=0.75pt]   [align=left] {{\Large e}};

\end{tikzpicture}

To observe the above diagram through an example, consider the root space $\lie g_{\eta(\bold k)}$ where $\eta(\bold k)= 2\alpha_3+\alpha_4+2 \alpha_5+\alpha_6 $ from Example \ref{basis1_ex2}.   
Fix $i=3$,
set of super-letters with basis 3 is ${\mathcal{A}_3(I,\zeta)}=\{3,34,345,3456,344,34545,\cdots\}$.

Lyndon heaps on  ${\mathcal{A}_3^\ast(I,\zeta)}$ of weight $\eta(\bold k)  = \{334556,334565\} $
\begin{align*} \text{ Lie monomials in super-letters }
&=\{\Lambda(334556), \Lambda(334565)\}\\
&=\{ [3,34556], [3,34565]\}\\
\end{align*} 
\begin{align*} \text{ Lyndon heaps basis }
&=\{\Lambda([3,34565]), \Lambda([3,34556])\}\\
&=\{[\Lambda(3), \Lambda(34565)], [\Lambda(3),\Lambda(34556)]\}\\
&=\{[3,[3,[4,[[5,6],5]]]], [3,[3,[4,[5,[5,6]]]]]\}\\
\end{align*}

Lyndon words on super-letters of weight $\eta(\bold k) =  \{334556,334565\}$
\begin{align*} \text{ Lie monomials in super-letters }
&=\{L(334556),L(334565)\}\\
&=\{ [3,34556], [3,34565]\}\\
\end{align*}
\begin{align*} \text{ LLN basis }
&=\{e([3,34556]), e([3,34565])\}\\
&=\{[e(3), e(34556)], [e(3),e(34565)]\}\\
&=\{[3,[[[[3,4],5],5],6]], [3,[[[[3,4],5],6],5]]\}\\
\end{align*}

\end{example}

\subsection{Comparison of the Lyndon basis with the LLN basis}\label{same} In this section, we will give necessary and sufficient condition in which certain elements in both the bases are equal. 
We start with the following proposition whose proof is immediate from Propositions \ref{order} and \ref{liffl}. 

 \begin{prop}\label{b=b}
	Let $\bold w \in \mathcal X_i^*$ be a Lyndon word with standard factorization $\sigma(\bold w) = (\bold w_1,\bold  w_2)$. Then $\bold w,\bold w_1$ and $\bold w_2$ can be thought of as Lyndon heaps [c.f. Proposition \ref{sliffsl}] and the Standard factorization of the Lyndon heap $\bold w$ is $\Sigma(\bold w) = (\bold w_1, \bold w_2)$. Further $L(\bold w) = \Lambda(\bold w)$.
\end{prop}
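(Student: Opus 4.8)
The plan is to reduce the statement to the agreement of two notions of standard factorization and then to propagate that agreement through the recursive definitions of $L$ and $\Lambda$. Throughout I would use that $\mathcal A_i^*(I,\zeta)$ is a \emph{free submonoid} of the heap monoid $\mathcal H(I,\zeta)$, so that the concatenation $\bold w = \bold w_1 \bold w_2$ of words over the super-letter alphabet $\mathcal A_i(I,\zeta)$ is literally the superposition $\bold w = \bold w_1 \circ \bold w_2$ of heaps. This single observation lets me move freely between the word picture (where $\sigma$ lives) and the heap picture (where $\Sigma$ lives).

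First I would record that all three words are Lyndon heaps. Since $\bold w$ is a Lyndon word over $\mathcal A_i(I,\zeta)$, Proposition \ref{liffl} identifies it with a Lyndon heap; the same proposition applies to $\bold w_1$ and $\bold w_2$, both of which are Lyndon words, because in the word standard factorization $\sigma(\bold w)=(\bold w_1,\bold w_2)$ of a Lyndon word the two factors are themselves Lyndon. This already gives the first assertion of the proposition and exhibits $(\bold w_1,\bold w_2)$ as one admissible candidate for the heap standard factorization $\Sigma(\bold w)$, since $\bold w_1 \ne 0$, $\bold w = \bold w_1 \circ \bold w_2$, and $\bold w_2$ is a Lyndon heap.

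The heart of the argument is to show that $(\bold w_1,\bold w_2)$ is \emph{the} standard factorization, i.e. that $\bold w_2$ is minimal. By definition the word factorization chooses $\bold w_2$ to be the $\le^{\ast}$-minimal proper Lyndon right factor of $\bold w$, while the heap factorization chooses $N$ to be the $\le$-minimal Lyndon heap with $\bold w = F \circ N$ and $F \ne 0$. Proposition \ref{order} says $\le$ and $\le^{\ast}$ agree on $\mathcal A_i^*(I,\zeta)$, so once I know the two minimizations range over the \emph{same} candidate set, the equality $N=\bold w_2$ is immediate. The point to verify is that the $\le$-minimal Lyndon factor $N$ lies in $\mathcal A_i^*(I,\zeta)$, and I expect this to be the main obstacle. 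The argument I have in mind exploits that $i$ is the least element of $I$: any decomposition $\bold w = F \circ N$ whose top part $N$ carries no piece in position $i$ yields a heap $N$ whose representative word begins with a letter strictly larger than $i$, forcing $N > \bold w_2$ (the suffix $\bold w_2$ does contain $i$-pieces and is small accordingly); hence a $\le$-minimal $N$ must have $i$ at its base, and Lalonde's structural description of $\mathcal A_i^*(I,\zeta)$ (super-letters being precisely the admissible elementary pyramids with basis $i$) then forces $N \in \mathcal A_i^*(I,\zeta)$. With this in hand the candidate sets coincide and $\Sigma(\bold w) = (\bold w_1,\bold w_2)$.

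Finally I would deduce $L(\bold w) = \Lambda(\bold w)$ by induction on the number of super-letters in $\bold w$. The base case is a single super-letter $E \in \mathcal A_i(I,\zeta)$, where $L(E)=E$ is identified with the Lie monomial $\Lambda(E)$ of the corresponding Lyndon heap. For the inductive step, the previous paragraph shows that $\sigma(\bold w)$ and $\Sigma(\bold w)$ are the same pair $(\bold w_1,\bold w_2)$, so the two recursions branch identically and $L(\bold w) = [L(\bold w_1),L(\bold w_2)] = [\Lambda(\bold w_1),\Lambda(\bold w_2)] = \Lambda(\bold w)$, the middle equality being the induction hypothesis. This closes the plan; the only genuinely non-formal input is the candidate-set verification flagged in the third paragraph, everything else being a transcription between the word and heap pictures furnished by Propositions \ref{order} and \ref{liffl}.
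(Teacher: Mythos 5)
Your proposal is correct and follows essentially the same route as the paper, which derives the proposition directly from Propositions \ref{order} and \ref{liffl} (declaring the proof ``immediate'') and leaves the remaining verification to Lalonde's compatibility results. The one step you flag as the main obstacle --- that the heap-theoretic minimal Lyndon right factor of $\bold w$ stays inside $\mathcal A_i^*(I,\zeta)$, so that $\sigma$ and $\Sigma$ minimize over the same candidate set --- is precisely what the paper treats as implicit in the cited machinery (cf.\ Lemma \ref{eq_stfact}, i.e.\ Lalonde's Lemma 2.3.5), so your elaboration fills in rather than departs from the paper's argument.
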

The following lemma will be helpful.
\begin{lem}\label{eq_stfact}\cite[Lemma 2.3.5]{la95} Let $E \in \mathcal{H}(I,P)$ with $|E| \geq 2$. Then $\Sigma(E) = (F,N)$ iff $\sigma(St(E)) = (St(F),
St(N)).$
\end{lem}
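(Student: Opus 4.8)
The plan is to transport both standard factorizations through the standard-word map $\st\colon \mathcal H(I,\zeta)\to I^*$, which is order-preserving by the very definition of the total order on heaps [c.f. Equation \eqref{totor}]. The point is that $\Sigma$ and $\sigma$ obey the same rule: in $\Sigma(E)=(F,N)$ the heap $N$ is the \emph{minimal} Lyndon heap with $E=F\circ N$ and $F\neq 0$, while in $\sigma(\st(E))=(\bold u,\bold v)$ the word $\bold v$ is the minimal Lyndon word with $\st(E)=\bold u\bold v$. Hence, if I exhibit an order-preserving bijection $N\mapsto\st(N)$ between the Lyndon \emph{right factors} of the heap $E$ and the Lyndon right factors of the word $\st(E)$, then the two notions of minimality match and \emph{both} implications of the lemma drop out simultaneously. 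First I would record the dictionary that a nonempty heap $E$ is Lyndon if and only if $\st(E)$ is a Lyndon word (the general-alphabet form of Proposition \ref{liffl}), so that $\sigma(\st(E))$ is defined precisely when $\Sigma(E)$ is. I would then argue by induction on $|E|$, the recursive structure of the two factorizations being identical.

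For the correspondence the easy half is immediate. Recall that $\st(E)$ is the lexicographically maximal linear extension of $(E,\le_E)$, i.e. the maximal element of $\psi^{-1}(E)$. Whenever $E=F\circ N$ is a superposition, a linear extension of $F$ followed by one of $N$ is a linear extension of $E$ (no piece of $N$ lies below a piece of $F$), so $\st(F)\,\st(N)\in\psi^{-1}(E)$ and therefore $\st(E)\ge \st(F)\,\st(N)$. The substance is the matching of this with the word side: I would identify a nonempty right factor $N$ (that is, $E=F\circ N$ with $F\neq 0$) with the up-set (filter) it occupies in $(E,\le_E)$, and show that for the \emph{relevant} factors this identification sends $N$ to the suffix $\st(N)$ of $\st(E)$ with $\st(E)=\st(F)\,\st(N)$, while conversely the Lyndon suffix selected by $\sigma$ is the standard word of such an up-set. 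Order-preservation of $\st$ then turns Lyndon right factors into Lyndon suffixes and minimal ones into minimal ones, and the general-alphabet Proposition \ref{liffl} guarantees that both sides are genuinely Lyndon.

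The step I expect to be the main obstacle is exactly the splitting identity $\st(E)=\st(F)\,\st(N)$ for the \emph{standard} cut $\Sigma(E)=(F,N)$, i.e. the reverse of the easy inequality. In general the lex-maximal linear extension of $F\circ N$ may interleave pieces of $F$ and $N$ (for instance when a piece of $N$ is globally minimal and occupies a large position), so the identity must use that $N$ is the \emph{minimal} Lyndon right factor. Constructing $\st(E)$ greedily, by repeatedly removing the largest currently-minimal piece, I would show that the procedure exhausts all of $F$ before it touches $N$: otherwise the terminal segment beginning at the first $N$-piece would be the standard word of a Lyndon heap right factor strictly smaller than $N$ (using that a Lyndon suffix of the Lyndon word $\st(E)$ is again Lyndon and that $\st$ is order-preserving), contradicting the minimality in $\Sigma(E)$. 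This yields $\st(E)=\st(F)\,\st(N)$, and since $\st(N)$ is then the minimal Lyndon suffix of $\st(E)$, it gives the forward implication $\sigma(\st(E))=(\st(F),\st(N))$. For the converse I would run the correspondence backwards: given $\sigma(\st(E))=(\bold u,\bold v)$, set $V=\psi(\bold v)$ and $U=\psi(\bold u)$; then $E=U\circ V$, with $\bold u=\st(U)$ and $\bold v=\st(V)$, and $V$ is the minimal Lyndon heap right factor, so $\Sigma(E)=(U,V)$. With the base case $|E|=2$ handled directly from the definitions, the induction closes and the equivalence follows.
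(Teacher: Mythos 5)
First, a point of comparison: the paper never proves Lemma \ref{eq_stfact} at all --- it is imported verbatim from \cite[Lemma 2.3.5]{la95} --- so your attempt can only be measured against Lalonde's argument, not against anything in this paper. Against that standard, your central device fails: there is \emph{no} order-preserving bijection $N\mapsto\st(N)$ between the Lyndon right factors of the heap $E$ and the Lyndon suffixes of $\st(E)$. Take $G$ to be the star with edges $\{1,2\}$ and $\{1,3\}$ (so $2$ and $3$ commute), with $1<2<3$, and let $E=1\circ 2\circ 3$. Then $E$ is a Lyndon heap with $\st(E)=132$, and $N=3$ is a Lyndon right factor because $E=(1\circ 2)\circ 3$; yet $\st(1\circ 2)\,\st(3)=123\neq 132$, and $3$ is not even a suffix of $132$. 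So the splitting identity $\st(E)=\st(F)\,\st(N)$ is simply false for general Lyndon right factors (here the heap side has two Lyndon right factors, $2$ and $3$, while $132$ has only one proper Lyndon suffix, $2$), and your plan of making ``both implications drop out simultaneously'' from such a correspondence collapses: only the standard cut can be expected to split, which is essentially the content of the lemma itself.

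The surviving route is the step you yourself flag as the main obstacle, and there your argument does not close. Granting that $\st$ is computed greedily (remove the largest currently minimal piece) and that each greedy stage exhibits a genuine right factor $S$ with $\st(E)=w\,\st(S)$: if greedy touched $N$ early, then $S=F'\circ N$ with $F'\neq 0$, and you assert that $\st(S)$ is ``the standard word of a Lyndon heap right factor strictly smaller than $N$.'' But nothing shows $S$ is Lyndon --- the parenthetical about Lyndon suffixes of Lyndon words is circular, since an arbitrary suffix of a Lyndon word is in general \emph{not} Lyndon (it is lexicographically larger than the whole word) --- and no comparison $S<N$ is offered. This transfer of minimality is precisely where \cite{la95} invests a sequence of preparatory lemmas, including the dictionary ``$E$ is a Lyndon heap iff $\st(E)$ is a Lyndon word,'' which is itself a theorem of \cite{la95} and not the super-letter statement of Proposition \ref{liffl} of this paper, so you are quietly importing nontrivial machinery. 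Two repairs are worth recording. First, the identities $\bold u=\st(\psi(\bold u))$ and $\bold v=\st(\psi(\bold v))$ that you assert in the converse are in fact provable from the greedy description: for \emph{any} cut of a standard word, both halves are standard words of their heaps. Second, once the forward implication is proved, the converse needs no independent correspondence at all --- it follows in one line from the uniqueness of $\Sigma(E)$ and of $\sigma(\st(E))$ --- so your unproved claim that $\psi(\bold v)$ is the minimal Lyndon right factor can simply be avoided.
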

From Example \ref{LLLN}, to compare elements in the Lyndon basis and LLN basis we have to compare the action of the maps $\Lambda$ and $e$ on the super-letters. The following proposition gives a necessary and sufficient condition for the Lyndon basis element and LLN basis element associated with a super-letter to be equal. 
\begin{prop}
	Let $E$ be a super-letter with the associated standard word $\bold w = a_1a_2\cdots a_r$. Then $\Lambda(E)=e(E)$ if, and only if, $a_1<a_r\leq a_{r-1}\leq \cdots \leq a_2$.
\end{prop}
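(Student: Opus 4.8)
The plan is to reduce the statement to a classical fact about Lyndon words and their two natural bracketings. First I would record what being a super-letter forces on the standard word $\bold w=\st(E)=a_1\cdots a_r$. Since $E$ is an admissible elementary pyramid it is a Lyndon heap (a single letter of $\mathcal A_i^*(I,\zeta)$, hence Lyndon by Proposition \ref{liffl}), its base $i$ is the least position occurring and occurs exactly once; as a Lyndon word $\bold w$ therefore begins with its minimal letter $a_1=i$, which appears once, so $a_1<a_j$ for all $j\ge 2$. In particular the inequality $a_1<a_r$ in the statement is automatic, and the real content is that the tail is weakly decreasing, $a_2\ge a_3\ge\cdots\ge a_r$. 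Applying Lemma \ref{eq_stfact} recursively identifies the heap standard factorization $\Sigma$ with the word standard factorization $\sigma$ of $\st(E)$ at every level, so the bracketing tree defining $\Lambda(E)$ is exactly the tree of the word bracketing $L(\bold w)$ on the same vertex leaves; thus $\Lambda(E)=L(\bold w)$ (this is the level-one version of Proposition \ref{b=b}). The proposition therefore becomes: for a Lyndon word $\bold w=a_1\cdots a_r$ with $a_1$ the unique minimal letter, $L(\bold w)=e(\bold w)$ if and only if $a_2\ge a_3\ge\cdots\ge a_r$. Equivalently, I will show this holds precisely when the iterated standard factorization peels off the single last letter at every level, since $e(\bold w)$ is by definition the left-comb bracket $[[\cdots[a_1,a_2],\dots],a_r]$.

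For the easy implication I would argue by induction on $r$ (the cases $r\le 2$ being immediate). Assume $a_2\ge a_3\ge\cdots\ge a_r$. Every proper suffix $a_m\cdots a_r$ with $m\ge 2$ is then weakly decreasing, hence Lyndon only if it has length one; so the longest proper Lyndon suffix is $a_r$ and $\sigma(\bold w)=(a_1\cdots a_{r-1},\,a_r)$, giving $L(\bold w)=[L(a_1\cdots a_{r-1}),a_r]$. The prefix $a_1\cdots a_{r-1}$ has the same monotone-tail shape, so by induction $L(a_1\cdots a_{r-1})=e(a_1\cdots a_{r-1})$, and therefore $L(\bold w)=[e(a_1\cdots a_{r-1}),a_r]=e(\bold w)$.

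The reverse implication is the main obstacle, and I would prove it by contraposition: if the tail is not weakly decreasing I must show $L(\bold w)\neq e(\bold w)$. The mechanism is that a strict ascent forces the standard factorization to peel off more than the last letter, and then a Jacobi expansion produces a second Lyndon-basis term in $e(\bold w)$. Concretely, if $a_{r-1}<a_r$ then $a_{r-1}a_r$ is a Lyndon suffix of length two, so the top factorization does not peel the last letter; the model computation $[[a_1,a_2],a_3]=[a_1,[a_2,a_3]]+[[a_1,a_3],a_2]=L(a_1a_2a_3)+L(a_1a_3a_2)$ for $a_1<a_2<a_3$ shows how an extra Lyndon monomial appears. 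The general statement I would prove by induction using $e(\bold w)=[e(a_1\cdots a_{r-1}),a_r]$: expand $e(a_1\cdots a_{r-1})$ in the Lyndon basis, bracket each term with $a_r$ (reducing each $[L(\bold u),a_r]$ back to Lyndon monomials by the procedure of Lemma \ref{s2}), and locate an ascent-induced Lyndon word distinct from $\bold w$ in the support. The genuinely technical point — the crux — is guaranteeing that this extra Lyndon monomial is never cancelled; for this I would invoke the classical leading-term property of the Lyndon basis (each $L(\bold u)$ equals $\bold u$ plus lexicographically larger anagrams, with $\bold u$ of coefficient one), which makes the Lyndon-basis expansion of $e(\bold w)$ computable and shows its support is a single element exactly in the monotone-tail case. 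Finally I would note that the super setting changes nothing essential: the super-Jacobi identity alters only signs and leaves the supporting set of heaps/words unchanged, so the equivalence for the odd super-letters follows verbatim from the ungraded computation.
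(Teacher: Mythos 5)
Your sufficiency direction is correct and is essentially the paper's own argument: you transport the heap factorization to the word level via Lemma \ref{eq_stfact}, observe that under $a_2\geq\cdots\geq a_r$ every proper suffix of length at least two is weakly decreasing and hence not Lyndon, so the standard factorization peels off $a_r$ at every level, and you conclude by induction. The genuine gap is in the converse. First, the mechanism you name cannot deliver it: triangularity gives $L(\bold u)=\bold u+(\text{lexicographically larger terms})$, but the left-normed element $e(\bold w)$ \emph{also} expands as $\bold w$ plus lexicographically larger words (every word in its associative expansion other than $\bold w$ begins with a letter greater than $a_1$, since $a_1$ is the unique minimal letter). So $e(\bold w)$ and $L(\bold w)$ share the same leading term with coefficient one, and the leading-term property alone cannot separate them; that the Lyndon expansion of $e(\bold w)$ contains a second surviving monomial whenever the tail has an ascent is precisely the statement to be proved, and your sketch defers it ("locate an ascent-induced Lyndon word \dots never cancelled") rather than proving it.

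Second, and more seriously, your reduction "$\Lambda(E)=L(\bold w)$, hence the proposition becomes a classical fact about Lyndon words" silently replaces equality in $\mathcal{LS}(G,\Psi)$ by equality in the free Lie superalgebra on $I$, and these are not interchangeable: the commutation relations can annihilate exactly the correction terms your argument needs. Concretely, take $\Psi=\emptyset$, vertices $a<b<c$ with edges $a\,$--$\,b$ and $b\,$--$\,c$ only. The heap $E=\psi(abc)$ is a super-letter with $\st(E)=abc$ and $\Sigma(E)=(a,\,b\circ c)$, so $\Lambda(E)=[a,[b,c]]$, while
\begin{equation*}
e(E)=[[a,b],c]=[a,[b,c]]+[[a,c],b]=[a,[b,c]]=\Lambda(E),
\end{equation*}
because $[a,c]=0$ in $\mathcal{LS}(G,\Psi)$; here the tail is strictly increasing, yet the "ascent-induced" extra monomial $[[a,c],b]$ vanishes identically. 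So the contrapositive cannot be obtained by citing the free-Lie-algebra dichotomy: any correct argument must control adjacency in $G$ between the earlier letters and $a_r$. Note that the paper takes a different route for the converse — induction on $r$, first forcing $\Lambda(F)=e(F)$ for the prefix heap $F$ by analyzing the possible shapes of the standard factorization, then ruling out $a_r>a_{r-1}$ via Lemma \ref{eq_stfact} — and it is tersest at exactly the assertion that differently factored monomials such as $[\Lambda(L),[a_{r-1},a_r]]$ and $e(E)$ are distinct; the computation above shows that this distinctness is the entire content of the converse in the partially commutative setting, and your proposal does not supply it.
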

\begin{proof}
	Let $E$ be a super-letter with the associated standard word $\bold w = a_1a_2\cdots a_r$. Assume that $a_1<a_r\leq a_{r-1}\leq \cdots \leq a_2$. Now, the standard factorization of $\bold w$ is given by $\sigma(\bold w)= (a_1a_2\cdots a_{r-1}, a_r)$. This implies that, by Lemma \ref{eq_stfact}, the standard factorization of $E$ is $\Sigma(E)=( F,G) $ where $ F$ and $G$ are Lyndon heaps satisfying $\st(F)=a_1a_2\cdots a_{r-1}$ and $\st(G)=a_r$. Therefore,
	\begin{align*}
	\Lambda(E) &=[\Lambda(F), \Lambda(G)]\\
	&=[\Lambda(F), a_r] \\
	&=[[\Lambda(F_1), \Lambda(a_{r-1})],a_r]  \text{ since }     \sigma(st(F))=(a_1a_2\cdots a_{r-1},a_{r-2})
		\\
	&= [[\Lambda(F_1),a_{r-1}], a_r] \\
	& \,\,\, \vdots \\
	&=[[[[[a_1,a_2],a_3],\cdots],a_{r-1}],a_r]\\
	&=e(E).
	\end{align*}
	Conversely, assume that $\Lambda(E)=e(E)$ for a super-letter $E$. Let $\bold w = a_1\cdots a_r$ be the standard word of $E$.  We will prove $a_1<a_r\leq a_{r-1}\leq \cdots \leq a_2$ by using induction on $r$.
	
	The base cases $r=1$ and $r=2$ are straight forward.
	
	Assume that the result is true for $r-1$. Write $\bold w=a_1a_2\cdots a_r= \bold w' \cdot a_{r} $ where $\bold w'=a_1a_2\cdots a_{r-1}$. We have $E =F\circ G $ with $\st(F)=a_1a_2\cdots a_{r-1}$ and $\st(G)=a_r$.  We observe that  $\bold w^{'} \in \chi_i$ and $F$ is a super-letter [c.f Equation \eqref{realization}].
	We claim that $\Lambda(F)=e(F)$, i.e., $\Lambda(F)= [[[[a_1,a_2],a_3],\cdots],a_{r-1}]$. Suppose not, then $\Lambda(F)=[\Lambda(F_1),\Lambda(F_2)]$ such that $\st(F_2) \neq a_{r-1}$ and $$\Lambda(E)= \Lambda(F_1\circ F_2\circ a_r)= \begin{cases}
	[\Lambda(F_1),\Lambda(F_2\circ a_r)] & \text{ if } \rm{IA}(\st(F))<a_r\\
	[\Lambda(F_1\circ F_2),\Lambda(a_r)] & \text{otherwise.}\\
	\end{cases}$$
	There is no other standard factorization of $F_1\circ F_2\circ a_r$ is possible as if $\sum(F_1\circ F_2\circ a_r)=(F_1\circ F_{21},F_{22}\circ a_r) $ for some standard factorization of $F_2=F_{21}\circ F_{22}$ then $F_{22} <F_{21}$ which contradicts that $F_2=F_{21}\circ F_{22}$ is Lyndon heap.

	This shows that $ \Lambda(E) \neq [[[[[a_1,a_2],a_3],\cdots],a_{r-1}],a_r]$, a contradiction.
	Therefore  $\Lambda( F)=e( F)$. Thus by using the induction hypothesis, we have $a_1< a_{r-1}\leq \cdots \leq a_2$. It remains to prove that $a_r \leq a_{r-1}$. Suppose $a_r >a_{r-1}$. We have $a_{r-1}\leq a_{r-2}$ and $\sigma(\bold w)= (a_1a_2\cdots a_{r-2}, a_{r-1}a_r)$. Therefore $\Sigma(E)=(L,K)$   (by Lemma  \ref{eq_stfact}) where $st(L)=a_1a_2\cdots a_{r-2}$ and $st(K)=a_{r-1}a_r$. This implies that 
	
	$
	\Lambda(E) =[\Lambda(L),\Lambda(K)]
	=[\Lambda(L),[a_{r-1},a_r]]
	=[[[[[a_1,a_2],a_3],\cdots],a_{r-2}],[a_{r-1},a_r]]$ which is not equal to
	 $e(E)$, a contradiction to our hypothesis. Hence $a_r \leq a_{r-1}$ and the proof is complete.
\end{proof}
\begin{rem}Example \ref{basis1_ex1} and Example \ref{basis2_ex1} have same basis as super-letters $36$, $366$ and $3666$ satisfy condition given in the above proposition. We remark that all these super-letters satisfying the condition given in the above proposition because there are only two elements in the support of the root. If support has more than two elements then some super-letter among these might satisfy the condition and the others may not. In particular, the Lyndon basis and the LLN basis will share only some common elements.  Example \ref{basis1_ex2} and Example \ref{basis2_ex2} have different basis as super-letters $34565$ and $34556$ do not satisfy the condition given in the above proposition.
\end{rem}

\section{ Combinatorial properties of free roots of BKM superalgebras}\label{multis}
In this section, we explore the further combinatorial properties of free roots of BKM superalgebras. Also, we explain why the proof works in the papers \cite{VV15,akv17}: Chromatic polynomial cannot distinguish multi edges. So we lose multi edges in the Dynkin diagram when we consider the chromatic polynomial of the graph of a BKM superalgebra. In particular, we lose the Cartan integers and consequently Serre relations. The root spaces which are independent of the Serre relations are precisely the free root spaces. 
 
\subsection{Free roots of BKM superalgebras} Let $(G,\Psi)$ be a finite simple supergraph with vertex set $I$, edge set $E$, and the set of odd vertices $\Psi \subseteq I$ [c.f. Definition \ref{sg}].  Let $(A=(a_{ij}),\Psi)$ be the adjacency matrix of $G$. We construct a class of BKM supermatrices from $(A,\Psi)$ as follows: Replace the diagonal zeros of $A$ by arbitrary real numbers. If one such number is positive then replace all the non-zero entries in the corresponding row of $A$ by arbitrary non-positive integers (resp. non-positive even integers) provided $i \notin \Psi$ (resp. $i \in \Psi$). Otherwise, replace the non-zero entries in the associated row of $A$ with arbitrary non-positive real numbers. Let $M_{\Psi}(G)$ be the set of BKM supermatrices associated with the supergraph $(G,\Psi)$ constructed in this way. Let $M(G) = \bigcup\limits_{\Psi \subseteq I}M_{\Psi}(G)$.  Let $\mathcal C(G)$ be the set of all BKM superalgebras whose quasi-Dynkin diagram is $(G,\Psi)$ for some $\Psi \subseteq I$. We observe that the set $C(G)$ consists of BKM superalgebras whose associated BKM supermatrices are in $M(G)$.  
In the following proposition, we will prove that all the BKM superalgebras belong to $\mathcal C(G)$  share the same set of free roots and have equal respective multiplicities.

\begin{prop}\label{ml}
	Let $G$ be a graph. Let $\lie g$  be a BKM superalgebra which is an element of $\mathcal C(G)$. Then
	\begin{enumerate}
		\item A $\alpha \in Q_+$ is a free root in $\lie g$ if and only if $\supp \alpha$ is connected in $G$. This gives a one-one correspondence between the connected subgraphs of $G$ and the free roots of any $\lie g \in \mathcal C(G)$. In particular, $\Delta^m(\lie g_1) = \Delta^m(\lie g_2)$ for $\lie g_1, \lie g_2 \in \mathcal C(G)$. 
		\item For any $\lie g \in \mathcal C(G)$, the multiplicity of a free root $\alpha$ depends only on the graph $G$ and this multiplicity is equal to the number of super Lyndon heaps of weight $\bold k =(k_i:i \in I)$ where $\alpha = \sum\limits_{i \in I}k_i\alpha_i$. 
	\end{enumerate}
\end{prop}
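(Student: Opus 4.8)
The plan is to reduce both assertions to the heap combinatorics of Section \ref{identificationsec}, exploiting the key fact that the free partially commutative Lie superalgebra $\mathcal{LS}(G,\Psi)$, the heap monoid $\mathcal H(I,\zeta)$, its superposition product, total order, $\mathbb Z_2$-grading (induced by $\Psi$), and hence the notion of super Lyndon heap are all defined purely from the supergraph $(G,\Psi)$ and the fixed order on $I$, with no reference whatsoever to the Cartan integers $a_{ij}$. Since every $\lie g\in\mathcal C(G)$ sharing the supergraph $(G,\Psi)$ gives rise to the very same $\mathcal{LS}(G,\Psi)$, all the invariants we attach to free roots will automatically be independent of the particular matrix in $M(G)$.

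For part (2) the argument is short and I would present it first. By Lemma \ref{identification lem}, for a free weight $\bold k$ one has $\mult\eta(\bold k)=\dim\mathcal{LS}_{\bold k}(G,\Psi)$, and by Theorem \ref{lafps} the super Lyndon heaps of weight $\bold k$ form a basis of $\mathcal{LS}_{\bold k}(G,\Psi)$. Hence $\mult\eta(\bold k)$ equals the number of super Lyndon heaps of weight $\bold k$ in $\mathcal H(I,\zeta)$. As this count is manifestly a function of $(G,\Psi)$ and the order on $I$ alone, it is constant over $\mathcal C(G)$, which is exactly the assertion.

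For part (1) I would again invoke Lemma \ref{identification lem}: for free $\bold k$, $\eta(\bold k)$ is a root of $\lie g$ iff $\mathcal{LS}_{\bold k}(G,\Psi)\neq 0$, and by Theorem \ref{lafps} (resp.\ Theorem \ref{lafp}) the latter holds iff there exists a super Lyndon heap of weight $\bold k$. Everything therefore reduces to the purely combinatorial claim that a (super) Lyndon heap of weight $\bold k$ exists iff the subgraph induced on $\supp\bold k$ is connected. For the forward direction I would note that if the positions of a nonempty heap $E$ split as $\pi(E)=S_1\sqcup S_2$ with no edge of $G$ between $S_1$ and $S_2$, then $E=E_1\circ E_2=E_2\circ E_1$ with $E_i$ the subheap on $S_i$ (pieces of $S_1$ and $S_2$ are never concurrent), so $E$ is not primitive and cannot be Lyndon; a super Lyndon heap $E=F\circ F$ forces $F$, hence $E$, to be connected by the same reasoning. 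For the converse I would construct a connected heap $E$ of weight $\bold k$ directly, stacking one piece per vertex along a spanning tree of the induced subgraph and then piling the remaining multiplicities on top of the appropriate positions; connectivity makes $E$ primitive, because two pieces at a common position are concurrent and so any commuting factorization would separate $\pi(E)$ into edge-disjoint blocks, contradicting connectivity. Since primitivity and connectivity are invariant under the conjugacy relation $\sim_c$, the minimal element of the conjugacy class of $E$ is a Lyndon heap of weight $\bold k$. Because connectivity of $\supp\bold k$ is a property of $G$ alone, the set of free roots coincides for all $\lie g\in\mathcal C(G)$ (so $\Delta^m(\lie g_1)=\Delta^m(\lie g_2)$); restricting to multiplicity-free weights $k_i\le 1$ for all $i$, the assignment $S\mapsto\sum_{i\in S}\alpha_i$ then furnishes the announced bijection between connected subgraphs of $G$ and free roots.

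The main obstacle is the existence half of the heap lemma: exhibiting a genuine Lyndon heap (not merely a nonzero grade space) of a prescribed connected weight, and verifying that primitivity is preserved when passing to the conjugacy-class minimum. This is where the structural facts on heaps from Section \ref{pyramidsec} and the basis theorems do the real work. The super case contributes only the extra bookkeeping of tracking the $\mathbb Z_2$-grading when $\bold k$ is even (so that $F\circ F$ with $F$ odd Lyndon can occur), which is handled by applying the connectivity argument to the half-heap $F$.
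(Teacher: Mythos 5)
Your part (2) is correct and is precisely the paper's own argument: Lemma \ref{identification lem} identifies $\lie g_{\eta(\bold k)}$ with $\mathcal{LS}_{\bold k}(G,\Psi)$, Theorem \ref{lafps} counts its dimension by super Lyndon heaps of weight $\bold k$, and all of this depends only on $(G,\Psi)$. For part (1), however, you diverge from the paper — which proves sufficiency by a Lie-theoretic induction on $\htt(\alpha)$, peeling off one simple root whose removal keeps the support connected and re-attaching it via the reflection/root-chain argument when $\alpha_i$ is real and via Lemma \ref{rootslem} when $\alpha_i$ is imaginary — and your combinatorial replacement contains a genuine gap.

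The gap is the step ``connectivity makes $E$ primitive.'' In a heap monoid a commuting factorization $E=U\circ V=V\circ U$ with both factors nonempty does \emph{not} force $\pi(U)$ and $\pi(V)$ into independent blocks: by the trace-monoid analogue of the Lyndon--Sch\"utzenberger theorem (Duboc, Cori--Perrin), $U$ and $V$ may instead be powers of a common heap, so the correct statement is that a heap is primitive iff it is connected \emph{and} not a proper power. Concretely, your spanning-tree construction applied to $\bold k=2\alpha_i$ yields $E=i\circ i$, which is connected but periodic and hence not primitive; likewise $(1\circ 2)\circ(1\circ 2)$ has connected support $\{1,2\}$ and is not primitive. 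Worse, the equivalence you reduce part (1) to — ``a super Lyndon heap of weight $\bold k$ exists iff $G_{\supp\bold k}$ is connected'' — is false as stated: for $\bold k=m\alpha_i$ with $i$ an even imaginary vertex and $m\ge 2$ (or $i$ odd non-isotropic and $m\ge 3$) the support is a single vertex, hence connected, yet every heap of that weight is a proper power of $i$ and no super Lyndon heap exists (only the case $m=2$ with $i$ odd is rescued by the super Lyndon heap $i\circ i$). So your existence argument needs two repairs: an explicit aperiodicity argument when $|\supp\bold k|\ge 2$ (e.g.\ choosing a representative word that is manifestly not a proper power, rather than the unjustified appeal to connectivity), and a separate treatment or exclusion of the degenerate single-vertex weights — a blind spot that, it is fair to say, the paper's own statement and its height-two step ($\alpha=\alpha_i+\alpha_j$ tacitly with $i\ne j$) also leave unaddressed, but which your write-up asserts affirmatively and incorrectly. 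Your forward direction (disconnected support forces a commuting factorization, so no Lyndon or super Lyndon heap) and your passage to the conjugacy-class minimum are fine once primitivity is actually in hand.
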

\begin{pf}
	The necessity part of (1) is straight forward and we prove the sufficiency part. Assume $\alpha \in Q_+$ is free and $\supp \alpha$ is connected in $G$. We claim that $\alpha$ is a root of $\lie g$. We use induction on height of $\alpha$. The case in which $\htt(\alpha) = 1$ is clear. Suppose $\htt(\alpha) = 2$ then $\alpha = \alpha_i + \alpha_j$ and $a_{ij}<0$. Suppose one of $\alpha_i$ or $\alpha_j$ is real. Without loss of generality we assume $\alpha_i$ is real. Then $S_{\alpha_i}(\alpha_j) = \alpha_j - a_{ij}\alpha_i$ is a root of $\lie g$. 
	This implies that $\alpha_i + \alpha_j$ is a root as the root chain of $\alpha_j$ through $\alpha_i$ contains $\alpha_j + m \alpha_i$ for all $0 \le m \le k$ for some $k \in \mathbb N$. Suppose both $\alpha_i$ and $\alpha_j$ are imaginary then Lemma \ref{rootslem} completes the proof. Assume that the result is true for all connected free $\alpha \in Q_+$ of height  $r-1$. Let $\beta$ be a connected free element of height $r$ in $Q_+$.  Let $\alpha_i \in \supp \beta$ be such that $\supp \beta \backslash \{\alpha_i\}$ is connected in $G$. Since $\supp \beta$ is connected such a vertex exit. Now, $\alpha = \sum_{\substack{\alpha_j \in \supp \beta \\ j \ne i}} \alpha_j$ is connected, free, and has height $r-1$. By the induction hypothesis $\alpha$ is a root in $\lie g$. If $\alpha_i$ is real then $S_{\alpha_i}(\alpha)$ is a root in turn $\beta = \alpha + \alpha_i$ is also root. If $\alpha_i$ is imaginary then, again by Lemma \ref{rootslem}, $\beta$ is a root. This completes the proof of (1). Now, the proof of (2) follows from Lemma \ref{identification lem} and Theorem \ref{lafps}.
\end{pf} 
\begin{example}
	Let $l_1 \ge 1, l_2 \ge 2$ and $l_3 \ge 3$ be positive integers satisfying $l_1 = l_2 = l_3 $. Then the complex finite dimensional simple Lie algebras $A_{l_1}, B_{l_2}$ and $C_{l_3}$ have the same quasi-Dynkin diagram the path graph on $l_1$ vertices with $\Psi = \emptyset$. In particular, these algebras have the same set of free roots by the above proposition. In Table \ref{table}, using
	the following Proposition \ref{urmie}, we have listed the BKM superalgebras for which the path graph on $4$ vertices is the quasi-Dynkin diagram along with its free roots. 
	\begin{table}[] \label{table}
		\caption{BKM superalgebras  with equal set of free roots} 
		\begin{tabular}{|l|l|l|}
			\hline
			BKM superalgebras & Simple roots \cite[Section 2.5.4]{kac77}&Free roots   \\ \hline
			$A_4$&$\alpha_1=\varepsilon_1-\varepsilon_2, \alpha_2=\varepsilon_2-\varepsilon_3,$    & $\alpha_1,\alpha_2,\alpha_3,\alpha_4, \alpha_{12},\alpha_{13},$     \\ 
			&$\alpha_3=\varepsilon_3-\varepsilon_4, \alpha_4=\varepsilon_4-\varepsilon_5.$&$\alpha_{14},\alpha_{23},\alpha_{24}, \alpha_{34}.$ \\ \hline
			$B_4$& $\alpha_1=\varepsilon_1-\varepsilon_2, \alpha_2=\varepsilon_2-\varepsilon_3,$   & $\alpha_1,\alpha_2,\alpha_3,\alpha_4, \alpha_{12},\alpha_{13},$  \\ 
			&$\alpha_3=\varepsilon_3-\varepsilon_4, \alpha_4=\varepsilon_4$. & $\alpha_{14},\alpha_{23},\alpha_{24}, \alpha_{34}.$ \\ \hline
			$C_4$& $\alpha_1=\varepsilon_1-\varepsilon_2, \alpha_2=\varepsilon_2-\varepsilon_3,$ & $\alpha_1,\alpha_2,\alpha_3,\alpha_4, \alpha_{12},\alpha_{13},$  \\ 
			&$\alpha_3=\varepsilon_3-\varepsilon_4, \alpha_4=2\varepsilon_4.$&$\alpha_{14},\alpha_{23},\alpha_{24}, \alpha_{34}.$ \\ \hline
			$A(3,0)$& $\alpha_1=\varepsilon_1-\varepsilon_2, \alpha_2=\varepsilon_2-\varepsilon_3,$ & $\alpha_1,\alpha_2,\alpha_3,\alpha_4, \alpha_{12},\alpha_{13},$  \\ 
			&$\alpha_3=\varepsilon_3-\varepsilon_4, \alpha_4=\varepsilon_4-\delta_1.$&$\alpha_{14},\alpha_{23},\alpha_{24}, \alpha_{34}.$ \\ \hline
			$A(2,1)$& $\alpha_1=\varepsilon_1-\varepsilon_2, \alpha_2=\varepsilon_2-\varepsilon_3,$ & $\alpha_1,\alpha_2,\alpha_3,\alpha_4, \alpha_{12},\alpha_{13},$  \\ 
			&$\alpha_3=\varepsilon_3-\delta_1, \alpha_4=\delta_1-\delta_2.$&$\alpha_{14},\alpha_{23},\alpha_{24}, \alpha_{34}.$ \\ \hline
			$B(0,4)$& $\alpha_1=\delta_1-\delta_2, \alpha_2=\delta_2-\delta_3,$ & $\alpha_1,\alpha_2,\alpha_3,\alpha_4, \alpha_{12},\alpha_{13},$  \\ 
			&$\alpha_3=\delta_3-\delta_4, \alpha_4=\delta_4.$&$\alpha_{14},\alpha_{23},\alpha_{24}, \alpha_{34}.$ \\ \hline
			$B(3,1)$& $\alpha_1=\delta_1-\varepsilon_1, \alpha_2=\varepsilon_1-\varepsilon_2,$ & $\alpha_1,\alpha_2,\alpha_3,\alpha_4, \alpha_{12},\alpha_{13},$  \\ 
			&$\alpha_3=\varepsilon_2-\varepsilon_3, \alpha_4=\varepsilon_3.$&$\alpha_{14},\alpha_{23},\alpha_{24}, \alpha_{34}.$ \\ \hline
			$C(4)$& $\alpha_1=\varepsilon_1-\delta_1, \alpha_2=\delta_1-\delta_2,$ & $\alpha_1,\alpha_2,\alpha_3,\alpha_4, \alpha_{12},\alpha_{13},$  \\ 
			&$\alpha_3=\delta_2-\delta_3, \alpha_4=2\delta_3.$&$\alpha_{14},\alpha_{23},\alpha_{24}, \alpha_{34}.$ \\ \hline
			$F(4)$& $\alpha_1=\frac{1}{2}(\varepsilon_1+\varepsilon_2+\varepsilon_3+\delta), \alpha_2=-\varepsilon_1,$ & $\alpha_1,\alpha_2,\alpha_3,\alpha_4, \alpha_{12},\alpha_{13},$  \\ 
			&$\alpha_3=\varepsilon_1-\varepsilon_2, \alpha_4=\varepsilon_2-\varepsilon_3.$&$\alpha_{14},\alpha_{23},\alpha_{24}, \alpha_{34}.$ \\ \hline
		\end{tabular}
	\end{table}

\end{example}

\begin{prop}\cite[Corollary 2.1.23]{UR06}\label{urmie}
A simple finite dimensional Lie superalgebra $\mathfrak{g}$ is a BKM superalgebra if and only if $\mathfrak{g}$ is contragredient of type $A(m,0) = \mathfrak{ sl}(m + 1,1), A(m,1) = \mathfrak{ sl}(m + 1,2), B(0,n) = \mathfrak{osp}(1,2n), B(m, 1) =\mathfrak{osp}(2m + 1,2), C(n)=\mathfrak{osp}(2,2n-2), D(m,1)=\mathfrak{osp}(2m,2), D(2,1,\alpha)$ for $\alpha=0,-1,F(4),$ and $G(3)$.
\end{prop}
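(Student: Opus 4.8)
The plan is to deduce the proposition from Kac's classification of finite-dimensional simple Lie superalgebras \cite{kac77}, exactly as in \cite{UR06}. First I would recall the trichotomy from that classification: such a $\lie g$ is either a simple Lie algebra, a basic classical Lie superalgebra ($A(m,n),B(m,n),C(n),D(m,n),D(2,1;\alpha),F(4),G(3)$), a strange superalgebra ($P(n)$ or $Q(n)$), or of Cartan type ($W(n),S(n),\tilde S(n),H(n)$). By construction a BKM superalgebra is contragredient and symmetrizable: it is presented by generators $e_i,f_i,h_i$ subject to the relations of Section \ref{sec 2.1} and carries a symmetrizable BKM supermatrix $(A,\Psi)$ satisfying conditions (1)--(6) there. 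Thus the first reduction is to discard the families admitting no Cartan-matrix presentation at all, namely the Cartan-type superalgebras and the strange superalgebras $P(n),Q(n)$ (whose invariant form is degenerate or odd); none of these can be a $\lie g(A,\Psi)$.

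For the remaining basic classical superalgebras the statement becomes a finite, family-by-family verification of the BKM-supermatrix conditions, carried out over the inequivalent Borel subalgebras (equivalently, over the orbit of Cartan matrices under odd reflections, the Weyl groupoid). For the \emph{if} direction I would, for each listed type, single out one Borel whose Cartan matrix $A$ together with the parity $\Psi$ is a BKM supermatrix; the distinguished simple root systems recorded in \cite[Section 2.5.4]{kac77} (reproduced in Table \ref{table}) are the natural candidates. Concretely: even non-isotropic simple roots give $a_{ii}=2$; the unique odd non-isotropic root of $B(0,n)=\lie{osp}(1,2n)$ produces the $\lie{sl}(0,1)$-situation of the Remark following the definition in Section \ref{sec 2.1}, where condition (5) (entries in $2\mathbb{Z}$) must and does hold; odd isotropic roots give $a_{ii}=0$. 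One then checks that all off-diagonal entries are $\le 0$ and that the integrality conditions (4)--(5) are met, which is possible precisely because in these types the odd part has small rank ($A(m,0),A(m,1),B(0,n),B(m,1),C(n),D(m,1)$), so the odd node(s) can be placed at an end of the diagram without forcing a positive off-diagonal entry; the exceptionals $F(4),G(3)$ and $D(2,1;\alpha)$ for $\alpha=0,-1$ are verified by direct inspection of their Cartan matrices.

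For the \emph{only if} direction I would show that every basic classical type not on the list admits no BKM-supermatrix presentation, e.g. $A(m,n)$ with $n\ge 2$, the $B(m,n),D(m,n)$ with $m\ge 1,\ n\ge 2$, and $D(2,1;\alpha)$ with $\alpha\notin\{0,-1\}$. The mechanism is that an interior odd isotropic root, or a pair of odd isotropic roots, contributes off-diagonal inner products of opposite sign, forcing some $a_{ij}>0$ and violating condition (2), and that odd reflections merely move such a configuration around without eliminating it; for $D(2,1;\alpha)$ the requirement that the Cartan matrix be integral with nonpositive off-diagonal entries, combined with the groupoid identification $D(2,1;\alpha)\cong D(2,1;\alpha')$ for $\alpha'$ in the orbit of $\alpha$, singles out $\alpha=0,-1$.

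The hard part will be precisely this last step: ruling out the excluded families uniformly over \emph{all} choices of Borel. This requires controlling the entire orbit of Cartan matrices under the Weyl groupoid generated by odd reflections and checking that a positive off-diagonal entry (or a failure of integrality) survives in every representative, a case analysis that is the technical heart of the classification underlying \cite[Corollary 2.1.23]{UR06}.
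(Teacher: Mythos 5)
First, a remark on the comparison itself: the paper does \emph{not} prove Proposition \ref{urmie}. It is quoted verbatim from \cite[Corollary 2.1.23]{UR06} and used only to populate Table 1 of examples, so there is no in-paper argument to match; your attempt has to be measured against the classification argument in Ray's book, whose broad shape (Kac's classification \cite{kac77}, elimination of Cartan-type and strange algebras, then a case analysis over Borel subalgebras) you do reproduce correctly in outline.

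There is, however, a genuine gap in your proposal: you reduce ``$\lie g$ is a BKM superalgebra'' to ``some Borel subalgebra of $\lie g$ yields a Cartan matrix satisfying the BKM supermatrix conditions (1)--(6) of Section \ref{sec 2.1}.'' That is necessary but not sufficient. In this paper (following \cite{WM01}) a BKM superalgebra is \emph{defined by generators and relations}, so $\lie g\cong\lie g(A,\Psi)$ requires in addition that the Serre-type relations (7)--(10) generate the entire defining ideal of the simple algebra. For superalgebras with isotropic odd simple roots this is a well-known obstruction (extra ``higher'' Serre relations, of the kind isolated by Yamane, are needed in general), and the family $D(2,1;\alpha)$ defeats your exclusion mechanism outright: taking the distinguished Borel and rescaling the isotropic row (harmless, since integrality is demanded only in rows with $a_{ii}=2$), one gets
\[
A'=\begin{pmatrix} 0 & -1 & -\alpha\\ -1 & 2 & 0\\ -1 & 0 & 2\end{pmatrix},\qquad \Psi=\{1\},
\]
which for \emph{every} $\alpha>0$ satisfies all six conditions: diagonal entries $0,2,2$; all off-diagonal entries $\le 0$; integrality holds in the $a_{ii}=2$ rows; and $DA'$ is symmetric for $D=\mathrm{diag}(1,1,\alpha)$ with positive entries. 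So no amount of sign-or-integrality bookkeeping over the odd-reflection orbit — the only ``only if'' mechanism you propose — can exclude generic $\alpha$; your specific claim that integrality plus the groupoid ``singles out'' the admissible $\alpha$ is false, since the isotropic row is not subject to condition (4) at all. What actually fails for generic $\alpha$ is the presentation: the quotient of the free construction by relations (7)--(10) is strictly larger than $D(2,1;\alpha)$, because the isotropic node with two neighbours requires an additional Serre relation that is a consequence of the BKM relations only for special $\alpha$. Deciding exactly when this happens is the substantive content of \cite[Corollary 2.1.23]{UR06}. The same omission infects your ``if'' direction: for $A(m,1)$, $B(m,1)$, $D(m,1)$, $F(4)$, $G(3)$ you verify only the matrix conditions for a good Borel, not that relations (7)--(10) actually return the simple algebra.

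Two smaller points. Your sign obstruction \emph{is} correct where you invoke it in type $A$: an interior isotropic node $x-y$ couples to its two neighbours with inner products $-(x,x)$ and $-(y,y)$, which have opposite signs, and this is incompatible with a positive symmetrizer; this genuinely rules out $A(m,n)$ with $n\ge 2$ once one also uses the incompatibility of even simple roots of norms $+2$ and $-2$. Finally, purely even simple Lie algebras are BKM superalgebras with $\Psi=\emptyset$ but are absent from the list, so the statement tacitly assumes $\lie g_1\neq 0$; your trichotomy should say so explicitly.
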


\begin{rem}
		We observe that the number of free roots of a BKM superalgebra is equal to the number of connected subgraphs $C(G)$ of $G$. In particular, when $G$ is a tree, this number is equal to the number of subtrees of $G$.  This number is well-studied in the literature. For example, see  \cite{dp19} and the references therein.
\end{rem}

	The rest of this section is dedicated to the proof of Theorem \ref{mainthmch} and Corollary \ref{recursionmult} which relates the $\bold k$-chromatic polynomial with root multiplicities of BKM superalgebras. 
Hence we obtain a Lie superalgebras theoretic interpretation of $\bold k$-chromatic polynomials.



\subsection{Multicoloring and the $\bold k$-chromatic polynomial of $G$} For any finite set $S$, let $\mathcal{P}(S)$ be the power set of $S$. For a tuple of non--negative integers
$\mathbf{k}=(k_i: i\in I)$, we have $\mathrm{supp}(\bold k)=\{i\in I: k_i\neq 0\}$.

\begin{defn}\label{chmpoly}Let $G$ be a graph with vertex set $I$ and the edge set $E(G)$. Let $\mathbf{k} \in \mathbb Z_+[I]$. 
	We call a map $\tau: I\rightarrow \mathcal{P}\big(\{1,\dots,q\}\big)$ a proper vertex $\bold k$-multicoloring of $G$ if the following conditions are satisfied:
	\begin{enumerate}
		\item[(i)]  For all $i\in I$ we have $|\tau(i)|=k_i$, 
		\item[(ii)] For all $i,j\in I$ such that $(i,j)\in E(G)$ we have $\tau(i)\cap \tau(j)=\emptyset$.
	\end{enumerate}
\end{defn}

The case $k_i=1$ for $i\in I$ corresponds to the classical graph coloring of graph $G$. 
For more details and examples we refer to \cite{HMK04}. 
The number of ways a graph $G$ can be $\bold k$--multicolored using $q$ colors is a polynomial in $q$, called the generalized $\bold k$-chromatic polynomial ($\bold k$-chromatic polynomial in short) and denoted by $\pi_\mathbf{k}^G(q)$. 
The $\bold k$-chromatic polynomial has the following well--known description. 
We denote by $P_k(\bold k,G)$ the set of all ordered $k$--tuples $(P_1,\dots,P_k)$ such that:
\begin{enumerate}
	\item each $P_i$ is a non--empty independent subset of $I$, i.e. no two vertices have an edge between them; and
	
	
	\item For all $i \in I$, $\alpha_i$ occurs exactly $k_i$ times in total in the disjoint union $P_1\dot{\cup} \cdots \dot{\cup} P_k$.
\end{enumerate} 
Then we have
\begin{equation}\label{defgenchr}\pi^G_\mathbf{k}(q)= \sum\limits_{k\ge0}|P_k(\bold k, G)| \, {q \choose k}.\end{equation}

We have the following relation between the ordinary chromatic polynomials and the $\bold k$-chromatic polynomials.  We have
\begin{equation} \label{connection}
\pi_\bold k^G(q)=\frac{1}{\bold k!}\pi_{\bold 1}^{G(\bold k)}(q)
\end{equation}
where $\pi_{\bold 1}^{G(\bold k)}(q) \ \text{is the chromatic polynomial of the graph
	$G(\bold k)$}$ and $\bold k!=\prod_{i\in I}k_i!$. 
The graph $G(\bold k)$ (the join of $G$ with respect to $\bold k$) is constructed as follows: For each $j\in \mathrm{supp}(\bold k)$, take a clique (complete graph) of size $k_j$ with vertex set $\{j^1,\dots, j^{k_j}\}$ and join all vertices of the $r$--th and $s$--th cliques if 
$(r,s)\in E(G).$ 

\textit{For the rest of this paper, we fix an element $\mathbf{k} \in \mathbb Z_+[I]$ satisfying $k_i\leq 1$ for $i\in I^{\text{re}}\sqcup \Psi_0$,  
	where $\Psi_0$ is the set of odd roots of zero norm.}   

\subsection{Bond lattice and an isomorphism of lattices}
In this subsection, we prove a lemma which will be useful in the proof of Theorem \ref{mainthmch}.
\begin{defn}\label{bond}Let $L_{G}(\bold k)$ be the weighted bond lattice of $G$, which is the set of 
	$\mathbf{J}=\{J_1,\dots,J_k\}$  satisfying the following properties:
	\begin{enumerate}
		\item[(i)] $\bold J$ is a multiset, i.e. we allow $J_i=J_j$ for $i\neq j$
		
		\item[(ii)] each $J_i$ is a multiset and the subgraph spanned by the underlying set of $J_i$ is a connected subgraph of $G$ for each $1\le i\le k$ and
		
		\item[(iii)] For all $i \in I$, $\alpha_i$ occurs exactly $k_i$ times in total in the disjoint union $J_1\dot{\cup} \cdots \dot{\cup} J_k$.
		
	\end{enumerate}
	
	For $\mathbf{J}\in L_{G}(\bold k)$ we denote by $D(J_i,\mathbf{J})$ the multiplicity of $J_i$ in $\mathbf{J}$ and set $\text{mult}(\beta(J_i))=\text{dim }\mathfrak{g}_{\beta(J_i)}$, where $\beta(J_i)=\sum_{\alpha\in J_i} \alpha$. We define $\bold J_0 = \{J_i \in \bold J : \beta(J_i) \in \Delta_+^0\}$ and $\bold J_1 = \bold J \backslash \bold J_0$.
\end{defn}
\begin{lem}\label{rootslem}\cite[Proposition 2.40]{WM01}
	Let $i \in I^{im}$ and $\alpha \in \Delta_+ \backslash \{\alpha_i\}$ such that $\alpha(h_i)<0$. Then $\alpha + j \alpha_i \in \Delta_+$ for all $j \in \mathbb Z_+$.
\end{lem}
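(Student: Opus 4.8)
The plan is to reduce the whole string to a single-step statement and then induct on $j$. Concretely, it suffices to prove: if $\beta\in\Delta_+$, $\beta\neq\alpha_i$ and $\beta(h_i)<0$, then $\beta+\alpha_i\in\Delta_+$. Granting this, at the $j$-th step I would take $\beta=\alpha+(j-1)\alpha_i$ and observe that the hypothesis propagates up the string, since $(\alpha+(j-1)\alpha_i)(h_i)=\alpha(h_i)+(j-1)a_{ii}<0$ (the first summand is $<0$ and the second is $\le 0$ because $i\in I^{im}$ forces $a_{ii}\le 0$), while $\alpha+(j-1)\alpha_i=\alpha_i$ is impossible for $\alpha\in\Delta_+\setminus\{\alpha_i\}$. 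Because $\beta+\alpha_i\in Q_+$ is automatically a positive root once it is a root at all, $\Delta$ may be replaced by $\Delta_+$ throughout.

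For the single-step claim I would analyse the $\alpha_i$-string $\bigoplus_{t}\lie g_{\beta+t\alpha_i}$ as a module over the rank-one subsuperalgebra $S_i=\langle e_i,f_i,h_i\rangle$, letting $E=\operatorname{ad}e_i$ and $F=\operatorname{ad}f_i$. The super-Jacobi identity together with $[e_i,f_i]=h_i$ gives, on each weight space $\lie g_\mu$, the operator identity $EF-(-1)^{\epsilon}FE=\mu(h_i)\,\mathrm{id}$, where $\epsilon=0$ if $i\notin\Psi$ and $\epsilon=1$ if $i\in\Psi$. Since positive roots have bounded height, the string is bounded below; I then argue by contradiction, assuming $\lie g_{\beta+\alpha_i}=0$, so that $E$ annihilates $\lie g_\beta$ and $\lie g_\beta$ sits at the top of the string. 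When $a_{ii}=0$ (the Heisenberg case) the scalar $\mu_t(h_i)=\beta(h_i)<0$ is the same on every weight space; taking traces along the string and telescoping, using $\operatorname{tr}(FE|_{\lie g_{\mu_t}})=\operatorname{tr}(EF|_{\lie g_{\mu_{t+1}}})$, forces $\beta(h_i)\sum_t\dim\lie g_{\beta+t\alpha_i}=0$, impossible since $\dim\lie g_\beta\ge 1$. When $a_{ii}<0$ I would rescale $h_i$ so that $S_i\cong\mathfrak{sl}_2$; the top space $\lie g_\beta$ then consists of highest-weight vectors of weight $c=2\beta(h_i)/a_{ii}>0$, so the submodule they generate is a sum of highest-weight modules of highest weight $c$, whose weight supports descend infinitely and thus contradict boundedness below.

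The hardest point will be the borderline $a_{ii}<0$ case in which $c=2\beta(h_i)/a_{ii}$ is a non-negative integer, for there $\mathfrak{sl}_2$-theory a priori permits a finite-dimensional (integrable) string and the naive contradiction fails. The resolution is precisely the feature distinguishing imaginary from real simple roots: along an imaginary direction $\operatorname{ad}e_i$ and $\operatorname{ad}f_i$ are not locally nilpotent, because the only Serre relations imposed on $\lie g$ are items $(7)$--$(9)$ of the defining relations, none of which truncates the $\alpha_i$-string for $i\in I^{im}$; feeding this into the module analysis excludes the finite-dimensional string and forces $\lie g_{\beta+\alpha_i}\neq 0$. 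A separate and careful bookkeeping of the super signs for $i\in\Psi$ (where one works with the appropriate rank-one subsuperalgebra and with $[e_i,e_i]$, $[f_i,f_i]$) will also be required, since the alternating signs break the clean trace telescoping used in the even case; this introduces technical work but no new idea. Since the statement is exactly \cite[Proposition 2.40]{WM01}, one may alternatively invoke it directly; the sketch above is included to indicate a self-contained route.
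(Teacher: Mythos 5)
The paper contains no proof of this lemma at all: it is quoted verbatim with the citation \cite[Proposition 2.40]{WM01}, so your fallback of invoking that reference is precisely what the paper does. The substantive question is therefore whether your self-contained sketch is sound, and it is not, at two points. First, the borderline case you yourself flag ($a_{ii}<0$ with $c=2\beta(h_i)/a_{ii}\in\mathbb Z_{\ge 0}$) is a genuine gap, and your proposed resolution is not an argument: the fact that none of the imposed Serre relations (7)--(9) mentions an imaginary $i$ does not show that $\operatorname{ad}e_i$ fails to be locally nilpotent on the quotient, since consequences of the relations that \emph{are} imposed could still annihilate $(\operatorname{ad}e_i)^{j}x$ and produce exactly the finite integrable string that $\mathfrak{sl}_2$-theory permits. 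Closing this requires a positive input, for instance the standard free-subalgebra theorem: if $x\in\lie g_\beta$ is a nonzero root vector with $(\alpha_i,\alpha_i)\le 0$ and $(\alpha_i,\beta)<0$, then $e_i$ and $x$ generate a free Lie (super)algebra, whence $(\operatorname{ad}e_i)^{j}x\neq 0$ for all $j$. (The even cases you treat are essentially fine: the Heisenberg trace argument and the non-integral highest-weight argument both work, modulo the small point that you should restrict to the submodule $\bigoplus_{t\le 0}\lie g_{\beta+t\alpha_i}$, which is $E$-stable exactly because of the assumption $\lie g_{\beta+\alpha_i}=0$.)

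Second, and more seriously, the super case is not ``technical work but no new idea'': for $i\in\Psi_0\subseteq I^{\mathrm{im}}$ the statement as written actually fails for $j\ge 2$, so no bookkeeping of signs can repair your argument there. Indeed relation (9) gives $[e_i,e_i]=0$, hence $(\operatorname{ad}e_i)^2=\tfrac12\operatorname{ad}[e_i,e_i]=0$; concretely, take two simple roots with $1\in\Psi$, $a_{11}=0$, $a_{12}<0$: then $\alpha_2(h_1)=a_{12}<0$, yet $2[e_1,[e_1,e_2]]=[[e_1,e_1],e_2]=0$, so $\alpha_2+2\alpha_1\notin\Delta_+$. This is consistent with your telescoping: in the odd case the correct identity is $EF+FE=\mu(h_i)\,\mathrm{id}$, and supertrace telescoping only yields $\beta(h_i)\sum_t\operatorname{sdim}\lie g_{\beta+t\alpha_i}=0$, which a terminating string satisfies because root spaces alternate parity along an odd $\alpha_i$-string. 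So the version of the lemma one can actually prove must exclude odd imaginary simple roots of zero norm from the ``all $j$'' conclusion (or retain only the single step $j=1$, which holds when $\beta-\alpha_i\notin\Delta\cup\{0\}$ by the argument above with $F\lie g_\beta=0$); note this is all the paper ever uses, since in Proposition \ref{ml} free roots satisfy $k_i\le 1$ for $i\in\Psi_0$.
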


\begin{lem}\label{bij}\cite[Lemma 3.4]{akv17} Let $\mathcal{P}$ be the collection of multisets $\gamma=\{\beta_1,\dots,\beta_r\}$ (we allow $\beta_i=\beta_j$ for $i\neq j$) such that each $\beta_i\in\Delta_+$ and $\beta_1+\dots+\beta_r=\eta(\bold k)$.
	The map $\psi: L_{G}(\bold k)\rightarrow \mathcal{P}$ defined by $\{J_1,\dots,J_k\}\mapsto \{\beta(J_1),\dots,\beta(J_k)\}$ is a bijection.
\end{lem}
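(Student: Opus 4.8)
The plan is to exhibit an explicit two-sided inverse $\phi\colon\mathcal{P}\to L_{G}(\bold k)$ for $\psi$, with the only substantive input being the correspondence between free roots and connected subgraphs supplied by Proposition \ref{ml}. Throughout I would use the elementary identification of multisets on the vertex set $I$ with elements of $\mathbb{Z}_+[I]$: a multiset $J$ corresponds to the element $\beta(J)=\sum_{\alpha\in J}\alpha$ whose coefficient of $\alpha_i$ equals the multiplicity of $\alpha_i$ in $J$, and conversely an element $\mu=\sum_{i}c_i\alpha_i\in\mathbb{Z}_+[I]$ corresponds to the multiset $J(\mu)$ in which $\alpha_i$ occurs $c_i$ times and whose underlying set is $\supp\mu$. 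These assignments are mutually inverse, so that $\beta(J(\mu))=\mu$ and $J(\beta(J))=J$; these two identities will make the inverse computations tautological.

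First I would check that $\psi$ is well-defined, i.e. that each $\beta(J_i)$ is genuinely a positive root for $\bold J=\{J_1,\dots,J_k\}\in L_{G}(\bold k)$. Writing $\beta(J_i)=\sum_{\ell}d_{\ell i}\alpha_\ell$, condition (iii) of Definition \ref{bond} gives $\sum_{i}d_{\ell i}=k_\ell$ for every $\ell$, whence $\sum_{i}\beta(J_i)=\sum_\ell k_\ell\alpha_\ell=\eta(\bold k)$. For $\ell\in I^{re}\sqcup\Psi_0$ we are assuming $k_\ell\le 1$, so $d_{\ell i}\le 1$ for all $i$; hence each $\beta(J_i)$ is free in the sense of Definition \ref{free def}. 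By condition (ii) the underlying set of $J_i$, which is exactly $\supp\beta(J_i)$, spans a connected subgraph of $G$, so Proposition \ref{ml} applies and yields $\beta(J_i)\in\Delta_+$. Thus $\psi(\bold J)=\{\beta(J_1),\dots,\beta(J_k)\}$ indeed lies in $\mathcal{P}$.

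Next I would set $\phi(\{\beta_1,\dots,\beta_r\})=\{J(\beta_1),\dots,J(\beta_r)\}$ and verify it lands in $L_{G}(\bold k)$. Since $\beta_1+\dots+\beta_r=\eta(\bold k)=\sum_i k_i\alpha_i$, the multiplicities of each $\alpha_\ell$ across the blocks sum to $k_\ell$, which is condition (iii); as before this forces the coefficient of each $\alpha_\ell$ with $\ell\in I^{re}\sqcup\Psi_0$ to be at most $k_\ell\le 1$, so every $\beta_j$ is free. Being a free root, $\beta_j$ has connected support by Proposition \ref{ml}, giving condition (ii) for $J(\beta_j)$; condition (i) is automatic since repetitions are permitted. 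Finally $\psi\circ\phi=\mathrm{id}$ and $\phi\circ\psi=\mathrm{id}$ follow at once from $\beta(J(\mu))=\mu$ and $J(\beta(J))=J$, so $\psi$ is a bijection.

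I expect no serious obstacle: the whole combinatorial weight rests on Proposition \ref{ml}, and the only point requiring care is the bookkeeping observation that freeness of $\eta(\bold k)$ is inherited by every block of $\bold J$ and by every summand $\beta_j$ — this is precisely what allows ``connected support'' to be upgraded to ``is a positive root,'' and once it is recorded the two compositions collapse to identities.
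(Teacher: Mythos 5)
Your proposal is correct and complete: the only nontrivial content is that freeness of $\eta(\bold k)$ passes to each block $J_i$ (resp.\ each summand $\beta_j$), after which Proposition \ref{ml} converts ``connected support'' into ``positive root'' and back, and the two blockwise identifications $\beta(J(\mu))=\mu$, $J(\beta(J))=J$ make $\psi$ and $\phi$ mutually inverse. The paper itself gives no proof here but cites \cite[Lemma 3.4]{akv17}, and the cited argument follows exactly this strategy (well-definedness via free-plus-connected implies root, surjectivity via the connectedness of the support of a free root, the Kac--Moody/BKM step resting on Lemma \ref{rootslem}), so your route is essentially the same.
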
 

\subsection{Proof of Theorem \ref{mainthmch}: (Chromatic polynomial and root multiplicities)}\label{pfmainthm}
For a Weyl group element $w\in W$,
we fix a reduced word $w=\bold {s}_{i_1}\cdots \bold{s}_{i_k}$ and let $I(w)=\{\alpha_{i_1},\dots,\alpha_{i_k}\}$. 
Note that $I(w)$ is independent of the choice of the reduced expression of $w$. For $\gamma = \sum\limits_{i \in I}m_i\alpha_i \in \Omega$, we set $I_m(\gamma)$ is the multiset $\{\underbrace{\alpha_i,\dots, \alpha_i}_{m_i\, \mathrm{times}}: i\in I\}$ and $I(\gamma)$ is the underlying set of $I_m(\gamma)$. We define $\Psi_0(\gamma) = I(\gamma) \cap \Psi_0$. Also, we define 
$\mathcal{J}(\gamma)=\{ w\in W\backslash \{e\}: I(w)\cup I(\gamma) \mbox{ is an independent set}\}.$
The following lemma is a generalization of \cite[Lemma 2.3]{VV15} (for Kac-Moody Lie algebras) and \cite[Lemma 3.6]{akv17} (for Borcherds algebras) to the setting of BKM superalgebras. Since the proof of this lemma is similar to the proof of the Borcherds algebras case, we omit the proof here. Recall that $\mathbf{k}=(k_i: i\in I)$ satisfies $k_i\leq 1$ for $i\in I^{\text{re}}\sqcup \Psi_0$.

\begin{lem}\label{helplem}
	Let $w\in W$ and $\gamma= \sum\limits_{i \in I \backslash \Psi_0}\alpha_i + \sum\limits_{i \in \Psi_0}m_i\alpha_i \in \Omega$. We write $\rho-w(\rho)+w(\gamma)=\sum_{\alpha\in\Pi} b_{\alpha}(w,\gamma)\alpha$. Then we have
	
	\begin{enumerate}
		\item[(i)] $b_{\alpha}(w,\gamma)\in \mathbb{Z}_{+}$ for all $\alpha\in \Pi$ and $b_{\alpha}(w,\gamma)= 0$ if $\alpha\notin I(w)\cup I(\gamma)$.
		\item[(ii)] $b_{\alpha}(w,\gamma)\geq 1 \text{ for all } \alpha \in I(w)$.
		\item [(iii)]$b_{\alpha}(w,\gamma)=1$ if $\alpha\in I(\gamma) \backslash \Psi_0(\gamma)$ and $b_{\alpha}(w,\gamma)=m_{\alpha}$ if $\alpha\in \Psi_0(\gamma)$. 
		\item[(iv)] If $w\in \mathcal{J}(\gamma)$, then $b_{\alpha}(w,\gamma)=1$ for all $\alpha\in I(w)\cup (I(\gamma) \backslash \Psi_0(\gamma))$, $b_{\alpha}(w,\gamma)=m_{\alpha}$ for all $\alpha \in \Psi_0(\gamma)$.
		\item[(v)] If $w\notin \mathcal{J}(\gamma)\cup \{e\}$, then there exists $\alpha\in I(w) \subseteq \Pi^{\mathrm re}$ such that $b_{\alpha}(w,\gamma)>1$.
		
	\end{enumerate}
\end{lem}
The following proposition is an easy consequence of the above lemma and essential to prove Theorem~\ref{mainthmch}.
Let $U$ be the sum-side	 of the denominator identity (Equation \eqref{denominator}).
\begin{prop}\label{helprop}
	Let $q\in \mathbb{Z}$. We have	
	$$U^q[e^{-\eta(\bold k)}]= (-1)^{\mathrm{ht}(\eta(\bold k))}\  \pi^{G}_{\mathbf{k}}(q),$$
	where $U^q[e^{-\eta(\bold k)}]$ denotes the coefficient of $e^{-\eta(\bold k)}$ in $U^q$.
\end{prop}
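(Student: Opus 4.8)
The plan is to expand $U^q$ directly as a $q$-fold product of the sum side of the denominator identity \eqref{denominator} and to extract the coefficient of $e^{-\eta(\bold k)}$, then to match the resulting signed count against the expansion \eqref{defgenchr} of $\pi^G_{\bold k}(q)$. Writing $\beta(w,\gamma):=\rho-w(\rho)+w(\gamma)=\sum_{\alpha}b_\alpha(w,\gamma)\alpha$, the term of $U$ indexed by $(w,\gamma)$ is $\epsilon(w)\epsilon(\gamma)e^{-\beta(w,\gamma)}$, so
\[
U^q[e^{-\eta(\bold k)}]=\sum \prod_{j=1}^q \epsilon(w_j)\epsilon(\gamma_j),
\]
the sum being over all $q$-tuples $((w_1,\gamma_1),\dots,(w_q,\gamma_q))\in (W\times\Omega)^q$ with $\sum_{j}\beta(w_j,\gamma_j)=\eta(\bold k)$.

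First I would carry out the key reduction, which is where the freeness hypothesis $k_i\le 1$ for $i\in I^{re}\sqcup\Psi_0$ enters. By Lemma \ref{helplem}(i) all $b_\alpha(w_j,\gamma_j)\ge 0$, and by (v), if some $w_j\notin \mathcal{J}(\gamma_j)\cup\{e\}$ then $b_\alpha(w_j,\gamma_j)>1$ for some real simple $\alpha$; summing over $j$ this would force the coefficient of $\alpha$ in $\eta(\bold k)$ to exceed $1$, contradicting $k_\alpha\le 1$. Likewise, by (iii)--(iv) a root $\alpha\in\Psi_0$ occurring with multiplicity $\ge 2$ in some $\gamma_j$ contributes $b_\alpha=m_\alpha\ge 2$, again impossible. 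Hence only tuples survive in which every $w_j\in\mathcal{J}(\gamma_j)\cup\{e\}$ and every $\gamma_j$ is a sum of distinct mutually orthogonal imaginary simple roots. For such a tuple Lemma \ref{helplem}(iv) gives $\beta(w_j,\gamma_j)=\sum_{\alpha\in B_j}\alpha$ with $B_j:=I(w_j)\sqcup I(\gamma_j)$ an independent (hence mutually orthogonal) subset of $I$, the union being disjoint since $I(w_j)\subseteq\Pi^{re}$ and $I(\gamma_j)\subseteq\Pi^{im}$, and independence being equivalent to orthogonality because an edge of $G$ means $a_{ij}\neq 0$.

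Next I would compute the common sign and set up the bijection. Because the roots of $I(w_j)$ are pairwise orthogonal, the generators $\bold s_\alpha$ commute and are involutions, so $\ell(w_j)=|I(w_j)|$; and since $\gamma_j$ is multiplicity-free, $\operatorname{ht}(\gamma_j)=|I(\gamma_j)|$. Thus $\epsilon(w_j)\epsilon(\gamma_j)=(-1)^{|B_j|}$, and since $\sum_j|B_j|=\operatorname{ht}\big(\sum_j\beta(w_j,\gamma_j)\big)=\operatorname{ht}(\eta(\bold k))$, every surviving tuple carries the same sign $(-1)^{\operatorname{ht}(\eta(\bold k))}$. The assignment $((w_j,\gamma_j))_j\mapsto (B_1,\dots,B_q)$ is a bijection onto ordered $q$-tuples of (possibly empty) independent subsets of $I$ with $\sum_j\mathbf 1[\alpha_i\in B_j]=k_i$: given $B_j$ one recovers $w_j=\prod_{\alpha\in B_j\cap\Pi^{re}}\bold s_\alpha$ and $\gamma_j=\sum_{\alpha\in B_j\cap\Pi^{im}}\alpha\in\Omega$. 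Grouping these tuples by the number $k$ of nonempty blocks, choosing their positions in $\binom{q}{k}$ ways and filling them by an element of $P_k(\bold k,G)$, I obtain that the number $N$ of surviving tuples equals $\sum_{k\ge 0}\binom{q}{k}|P_k(\bold k,G)|=\pi^G_{\bold k}(q)$, so $U^q[e^{-\eta(\bold k)}]=(-1)^{\operatorname{ht}(\eta(\bold k))}\pi^G_{\bold k}(q)$ for all $q\in\mathbb N$.

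Finally, to reach arbitrary $q\in\mathbb Z$ I would note that writing $U=1+V$ with $V$ in the augmentation ideal (every nontrivial term has positive height) gives $U^q=\sum_{m\ge 0}\binom{q}{m}V^m$, and only $m\le\operatorname{ht}(\eta(\bold k))$ contribute to the coefficient of $e^{-\eta(\bold k)}$; hence $U^q[e^{-\eta(\bold k)}]$ is a polynomial in $q$. Since it agrees with the polynomial $(-1)^{\operatorname{ht}(\eta(\bold k))}\pi^G_{\bold k}(q)$ on all positive integers, the two agree identically. I expect the main obstacle to be the reduction step: verifying carefully, via Lemma \ref{helplem} and the definition of $\Omega$, that the freeness of $\eta(\bold k)$ kills every tuple except those built from independent sets, and in particular controlling the new super-phenomenon of $\Psi_0$-roots of norm zero, which are precisely the imaginary simple roots allowed to appear with multiplicity in $\Omega$ and must here be forced down to multiplicity one.
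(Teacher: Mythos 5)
Your proof is correct and takes essentially the same route as the paper: the same appeal to Lemma \ref{helplem} to eliminate every tuple except those with $w_j\in\mathcal{J}(\gamma_j)\cup\{e\}$ and multiplicity-free $\gamma_j$ (using $k_i\le 1$ on $I^{re}\sqcup\Psi_0$), the same identification of the surviving data with independent subsets of $I$ counted by $P_k(\mathbf{k},G)$, and the same sign count giving $(-1)^{\mathrm{ht}(\eta(\mathbf{k}))}$. The only difference is bookkeeping: the paper expands $U^q=\sum_{k\ge 0}\binom{q}{k}(U-1)^k$, which yields the identity for all $q\in\mathbb{Z}$ at once, whereas you multiply out $q$ literal factors for $q\in\mathbb{N}$ (recovering $\binom{q}{k}$ by choosing the nonempty blocks) and then pass to arbitrary integer $q$ by a correct but avoidable polynomiality-and-interpolation step.
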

	\begin{pf}
		Write $U^q=\sum\limits_{k\ge 0}{q \choose k}\, (U-1)^k$.
		From Lemma~\ref{helplem} we get 
		$$w(\rho)-\rho-w(\gamma)=-\gamma-\sum_{\alpha\in I(w)}\alpha,\ \ \text{for $w\in \mathcal{J}(\gamma)\cup\{e\}$}.$$
		
		Since $k_i \le 1$ for $i \in I^{re}\sqcup \Psi_0$, the coefficient of $e^{-\eta(\bold k)}$ in $(U-1)^k$ is equal to 
		\begin{equation}\label{coeff}
			\Bigg(\sum_{ \substack{\gamma \in
					\Omega \\ \gamma \ne 0}}\sum_{w \in \mathcal{J}(\gamma)} \epsilon(\gamma)\epsilon(w) e^{-\gamma-\sum_{\alpha\in I(w)}\alpha}\Bigg)^k[e^{-\eta(\bold k)}].
		\end{equation}
		Hence the coefficient is given by
		\begin{equation}
			\sum_{\substack{(\gamma_1,\dots,\gamma_k)\\(w_1,\dots,w_k)}}\epsilon(\gamma_1)\cdots\epsilon(\gamma_k)\epsilon(w_1)\cdots\epsilon(w_k)
		\end{equation}
		where the sum ranges over all $k$--tuples $(\gamma_1,\dots,\gamma_k)\in \Omega^k$ and $(w_1,\dots,w_k) \in W^k$ such that
		\begin{align*}
		&\bullet \ w_i\in \mathcal{J}(\gamma_i)\cup \{e\},\text{  $1\le i\le k$},&\\&
		\bullet \ I(w_1)\ \dot{\cup} \cdots \dot{\cup}\ I(w_k)=\{\alpha_i: i\in I^{\mathrm{re}}, k_i=1\},&\\&
		\bullet \ I(w_i)\cup I(\gamma_i)\neq \emptyset \ \text{ for each $1\le i\le k$},&\\&
		\bullet \ \gamma_1+\cdots+\gamma_k=\sum_{i\in I^{\mathrm{im}}}k_i\alpha_i. 
		\end{align*}
		
		It follows that $\big(I(w_1)\cup I(\gamma_1),\dots,I(w_k)\cup I(\gamma_k)\big)\in P_k\big(\bold k,G\big)$ and each element is obtained in this way. 
		So the sum ranges over all elements in $P_k\big(\bold k,G\big)$.
		Hence $(U-1)^k[e^{-\eta(\bold k)}]$ is equal to $(-1)^{\mathrm{ht}(\eta(\bold k))} |P_k(\bold k, G)|$. Now, Equation \eqref{defgenchr} completes the proof.	\end{pf}

\begin{rem}
	We need the extra assumption $k_i \le 1$ for $i \in \Psi_0$ when we extend Theorem \ref{mainthmchb} to the case of BKM superalgebras. Suppose $k_i > 1$ for some $i \in \Psi_0$. We observe that each $\gamma_i$ contributes $I_m(\gamma_i)$ to the required coefficient in Equation \eqref{coeff} and $I_m(\gamma_i)$ can be a multiset. This implies that $I(w_i)\cup I_m(\gamma_i)$ can be a multiset. The independent set considered in Equation \eqref{defgenchr} are sets. By assuming $k_i \le 1$ for $i \in \Psi_0$, we have avoided the possibility of $I_m(\gamma_i)$ being a multiset. 
\end{rem}
Now, we can prove Theorem~\ref{mainthmch} using the product side of the denominator identity \eqref{denominator}. Proposition~\ref{helprop} and Equation \eqref{denominator} together imply that the $\bold k$-chromatic polynomial $\pi^{G}_{\mathbf{k}}(q)$ is given by the coefficient of $e^{-\eta(\bold k)}$ in 
\begin{equation}\label{expa}(-1)^{\mathrm{ht}(\eta(\bold k))}\frac{\prod_{\alpha \in \Delta_+^0} (1 - e^{-\alpha})^{q\mult	(\alpha)}} {\prod_{\alpha \in \Delta_+^1} (1 + e^{-\alpha})^{q\mult(\alpha)}} = (-1)^{\mathrm{ht}(\eta(\bold k))} \prod_{\alpha \in \Delta_+} (1 - \epsilon(\alpha)e^{-\alpha})^{\epsilon(\alpha)q\mult(\alpha)} .\end{equation} where $\epsilon(\alpha) = 
1 \text{ if }\alpha \in \Delta_+^0 \text{ and } -1 \text{ if }\alpha \in \Delta_+^1$.
Now, $$\prod_{\alpha \in \Delta_+} (1 - \epsilon(\alpha)e^{-\alpha})^{\epsilon(\alpha)q\mult(\alpha)} = \prod_{\alpha \in \Delta_+}\Big(\sum\limits_{k \ge 0}(-\epsilon(\alpha))^k\binom{\epsilon(\alpha) q \mult \alpha}{k}e^{-k\alpha}\Big).$$ A direct calculation of the coefficient of $e^{-\eta(\bold k)}$ in the right-hand side of the above equation completes the proof of Theorem \ref{mainthmch}. 



\subsection{Proof of Corollary \ref{recursionmult}: (Formula for multiplicities of free roots)}\label{pfmultf}
In this subsection, we prove Corollary \ref{recursionmult} which gives a combinatorial formula for the multiplicities of free roots. We consider the algebra of formal power series $\mathcal{A}:=\mathbb{C}[[X_i : i\in I]]$. For a formal power series $\zeta\in\mathcal{A}$ with constant term 1, its logarithm $\text{log}(\zeta)=-\sum_{k\geq 1}\frac{(1-\zeta)^k}{k}$ is well--defined.

\begin{pf}
	We consider $U$ as an element of $\mathbb{C}[[e^{-\alpha_i} : i\in I]]$ where $X_i = e^{-\alpha_i}$ [c.f. Lemma \ref{helplem}]. 
	From the proof of Proposition~\ref{helprop} we obtain that the coefficient of $e^{-\eta(\bold k)}$ in $-\text{log } U$ is equal to
	$$(-1)^{\mathrm{ht}(\eta(\bold k))}\sum\limits_{k\ge 1}\frac{(-1)^k}{k}|P_k(\bold k,G)|$$ which by Equation \eqref{defgenchr} is equal to $|\pi^{G}_{\bold k}(q)[q]|$.
	Now applying $-\text{log }$ to the right hand side of the denominator identity \eqref{denominator} gives 
	\begin{equation}\label{recursss}\sum_{\substack{\ell\in \mathbb{N}\\ \ell | \bold k}} \frac{1}{\ell}\mult \big(\eta\left(\bold k/\ell\right)\big)=
	|\pi^G_{\bold k}(q)[q]|\end{equation} if $\beta(\bold k) \in \Delta_0^+$ and 
	\begin{equation}\label{recursss}\sum_{\substack{\ell\in \mathbb{N}\\ \ell | \bold k}} \frac{(-1)^{l+1}}{\ell} \mult \big( \eta\left(\bold k/\ell\right)\big)=
	|\pi^G_{\bold k}(q)[q]|\end{equation}  if $\beta(\bold k) \in \Delta_1^+$. 
	
	The statement of the corollary is now an easy consequence of the following M\"{o}bius inversion formula: $g(d) = \sum_{d|n}f(d) \iff f(n) = \sum_{d|n}\mu(\frac{n}{d})g(d)$ where $\mu$ is the m\"{o}bius function.
\end{pf}
\begin{example}\label{ch_ex1} Consider the BKM superalgebra $\lie g$ and the root space $\eta(\bold k)= 3\alpha_3+3\alpha_6 \in \Delta_+^1$ from Example \ref{basis1_ex1}. 
	The $\bold k$-chromatic polynomial of the quasi Dynkin diagram $G$ of $\lie g$ is equal to
	$$\pi^{G}_{\bold k}(q)= \binom{q}{3} \binom{q-3}{3}= \frac{1}{
		3! 3!}q(q-1)(q-2)(q-3)(q-4)(q-5).$$
	By Corollary \ref{recursionmult}, since $\eta(\bold k)$ is odd,
	\begin{align*}
	\mult(\eta(\bold k)) &= \sum\limits_{\ell | \bold k}\frac{(-1)^{l+1} \mu(\ell)}{\ell}\ |\pi^{G}_{\bold k/\ell}(q)[q]|\\
	&=|\pi_{\bold k}^G(q)[q]|+\frac{\mu(3)}{3}|\pi_{\bold k'}^G(q)[q]| \text{ where } \bold k'=(0,0,1,0,0,1)\\
	&= \frac{10}{3}-\frac{1}{3} \\
	&=3
	\end{align*}
\end{example}

\begin{example}\label{multf0} Consider the BKM superalgebra $\lie g$ from the previous example. Let $\bold k = (2,1,0,1,2,0) \in \mathbb Z_+[I]$. Then $\eta(\bold k)= 2\alpha_1+\alpha_2+\alpha_4 + 2 \alpha_5 \in \Delta_+^0$.  We have  \begin{equation*}
	\mult(\eta(\bold k)) = \sum\limits_{\ell | \bold k}\frac{\mu(\ell)}{\ell}\ |\pi^{G}_{\bold k/\ell}(q)[q]|\end{equation*} 
	This implies that
	$\mult(\eta(\bold k)) =  |\pi^{G}_{\bold k}(q)[q]|$. 
	We have $\pi^{G}_{\bold k}(q)= \frac{1}{4}q (q-1)^3(q-2)^2$. Therefore $ \mult(\eta(\bold k))=1$.

\end{example}

\bibliographystyle{plain}

\end{document}